\newcommand{\dint}{\mathrm{d}}
\author{K\'aroly J. B\"or\"oczky, Ferenc Fodor\thanks{The author was supported by the Hungarian National Research, Development and Innovation Office – NKFIH grant 134814. This research was supported by project TKP2021-NVA-09. Project no. TKP2021-NVA-09 has been implemented with the support provided by the Ministry of Innovation and Technology of Hungary from the National Research, Development and Innovation Fund, financed under the TKP2021-NVA funding scheme.}, 
Daniel Hug\thanks{The author was supported by DFG research grant HU 1874/5-1 (SPP 2265).}}
\title{Strengthened inequalities for the mean width and the $\ell$-norm of origin symmetric convex bodies
\footnote{{\em AMS 2020 subject classification.}
Primary 52A40; Secondary 52A38, 52B12, 26D15.
\newline
{\em Key words and phrases.} Mean width, $\ell$-norm, crosspolytope, cube, extremal problem,  John ellipsoid, L\"owner ellipsoid,  Brascamp--Lieb inequality, mass transportation, stability result, isotropic measure.}}
\newcommand{\proof}{\noindent{\it Proof. }}
\newcommand{\proofbox}{\mbox{ $\Box$}}
\newcommand{\R}{\mathbb{R}}
\DeclareMathOperator{\Id}{I}
\DeclareMathOperator{\conv}{conv}
\newtheorem{lemma}{Lemma}[section]
\newtheorem{theo}[lemma]{Theorem}
\newtheorem{coro}[lemma]{Corollary}
\newtheorem{prop}[lemma]{Proposition}
\date{}
\begin{document}

\maketitle

\begin{abstract}
Barthe, Schechtman and Schmuckenschl\"ager proved that the cube maximizes the mean width of symmetric convex bodies whose John ellipsoid (maximal volume ellipsoid contained in the body)  is the Euclidean unit ball,  and the regular crosspolytope minimizes the mean width of symmetric convex bodies whose L\"owner ellipsoid is the Euclidean unit ball.
Here we prove close to be optimal stronger stability versions of these results, together with their counterparts about the $\ell$-norm based on Gaussian integrals. We also consider related stability results for the mean width and the $\ell$-norm of the convex hull of the support of even  isotropic measures on the unit sphere.
\end{abstract}

\section{Introduction}

Geometric inequalities and extremal problems constitute a central topic in geometry and geometric analysis. Perhaps the best known example is the isoperimetric inequality which states that Euclidean balls have smallest surface area among  sets of given (finite) volume in Euclidean space $\R^n$. If we restrict to compact convex sets with nonemtpy interior (convex bodies), then Euclidean balls are the only minimizers. Another important example is the Urysohn inequality according to which Euclidean balls minimize the mean width of convex bodies of given volume.  While for these two classical examples the extremizers (Euclidean balls) are rotationally symmetric, for some other extremal problems simplices, cubes and crosspolytopes naturally arise as extremizers.

In the following, we mainly focus on geometric inequalities and extremal problems for origin symmetric ($o$-symmetric) convex bodies (see \cite{BFH21} for the non-symmetric setting) in Euclidean space $\R^n$, $n\ge 2$, with scalar product $\langle\cdot\,,\cdot\rangle$ and norm $\|\cdot\|$.   
We write $B^n_p$ to denote the unit ball of the $l_p$ norm in $\R^n$ for $p\in[1,\infty]$, in particular $B^n_1$ is a regular crosspolytope inscribed into $B^n_2$, and $(B^n_1)^\circ=B^n_\infty$ is a cube circumscribed around $B^n_2$,  where $K^\circ=\{x\in\R^n:\langle x,y\rangle\leq 1\;\forall\, y\in K\}$  
is the polar dual of an origin symmetric convex body $K\subset\R^n$.

An intriguing geometric extremal problem for which cubes and crosspolytopes are extremizers has been discovered and studied much more recently. Let $K$ denote some origin symmetric convex body in $\R^n$. It is well known that there exists a unique ellipsoid of maximal volume contained in $K$ (the John ellipsoid of $K$), and a unique ellipsoid of minimal volume containing $K$ (the L\"owner ellipsoid of $K$). It has been proved by Ball \cite{Bal91a} that cubes maximize the volume of $K$
given the volume of the John ellipsoid of $K$, which can be paraphrased by saying that cubes determine the extremal ``inner" volume ratio for origin symmetric convex bodies. For the dual problem, Barthe \cite{Bar98} showed that
crosspolytopes minimize  the volume of $K$
given the volume of the L\"owner ellipsoid of $K$, hence crosspolytopes determine the extremal ``outer" volume ratio (see also Lutwak, Yang, Zhang \cite{Lutwak0,Lutwak1}). In all these cases, the understanding of the equality cases essentially relies on Barthe's fundamental work  \cite{Bar98} on the Brascamp--Lieb inequality and its reverse inequality.

Since the mean width plays a key role in the present work, we provide an explicit definition. Let  $\kappa_n$ denote the volume of the unit ball $B^n_2$ and $S^{n-1}$ its boundary (the Euclidean unit sphere). For a convex body $K\subset \R^n$, the support function $h_K:\R^n\to\R$ of $K$ is defined by
$h_K(x)=\max_{y\in K}\langle x,y\rangle$ for $x\in\R^n$. Then the mean width of $K$ is given by
$$
W(K)=\frac1{n\kappa_n}\int_{S^{n-1}}(h_K(u)+h_K(-u))\,\dint u,
$$
where the integration is with respect to the $(n-1)$-dimensional spherical Lebesgue measure. As a functional on convex bodies, the mean width is isometry invariant, continuous, additive (a valuation) and (positively) homogeneous of degree one. Moreover the mean width is characterized by these properties.

In addition to the (translation invariant) mean width we consider the $\ell$-norm of origin symmetric convex bodies.
To define the latter,  for an $o$-symmetric convex body $K\subset\R^n$, we set
$$
\|x\|_K=\min\{t\geq 0:\,x\in tK\}, \qquad  x\in\R^n.
$$
Let $\gamma_n$ denote the standard Gaussian measure in $\R^n$, which has the density function
$x\mapsto \frac1{\sqrt{2\pi}^{n}}\,e^{-{\|x\|^2}/{2}}$, $x\in\R^n$,  with respect to Lebesgue measure. Then the $\ell$-norm of $K$ is defined by
$$
\ell(K)=\int_{\R^n}\|x\|_K\,\gamma_n(\dint x)=\mathbb{E} \| X\|_K,
$$
where $X$ is a Gaussian random vector with distribution $\gamma_n$. Using polar coordinates, the relation  $h_{K^\circ}=\|\cdot\|_K$ and denoting by $\Gamma(\cdot)$ the Gamma function, we get
\begin{equation}
\label{mean-ell}
\ell(K)=\mbox{$\frac{\ell(B^n_2)}2$}  W(K^\circ)
\end{equation}
with $\ell(B^n_2)=\sqrt{2}\Gamma(\frac{n}{2})^{-1}\Gamma(\frac{n+1}{2})$, hence
$$
\lim_{n\to\infty}\frac{\ell(B^n_2)}{\sqrt{n}}=1.
$$
Clearly, the $\ell$-norm of $K$ can be rewritten in the form  %(see Barthe \cite{Bar98b})
\begin{equation}
\label{ellGaussianint}
\ell(K)=\int_{0}^\infty\mathbb{P}(\|X\|_K>t)\, \dint t
=\int_0^\infty(1-\gamma_n(tK))\,\dint t.
\end{equation}

 The following Theorem~\ref{meanw-sym}   for the $\ell$-norm of symmetric convex bodies was first stated by Schechtman, Schmuckenschl\"ager \cite{ScS95} (without statements concerning the equality cases). Part (i) is only briefly mentioned \cite[p.~270]{ScS95}), an explicit proof including a characterization of the equality case was provided by Barthe \cite[Theorem 2]{Bar98b}.  Part (ii) was  proved by Schechtman, Schmuckenschl\"ager   \cite[Proposition 4.11]{ScS95} as an application of the Brascamp--Lieb inequality. The equality case follows from Barthe's general analysis of the equality conditions in the Brascamp--Lieb inequality \cite{Bar98}. The non-symmetric cases of these statements are treated by Barthe \cite{Bar98b} and Schmuckenschl\"ager \cite{Sch99} (see \cite{BFH21} for strengthened inequalities in the non-symmetric case).

\begin{theo}[Barthe '98, Schechtman, Schmuckenschl\"ager '95]
\label{meanw-sym}
Let $K$ be an origin symmetric convex body in $\R^n$.
%\begin{description}
\begin{enumerate}
\item[{\rm (i)}] If $B^n_2\supset K$ is the L\"owner ellipsoid of $K$, then $\ell(K)\leq \ell(B^n_1)$
and $W(K)\geq W(B^n_1)$. Equality holds in either case if and only if $K$ is a regular crosspolytope.

\item[{\rm (ii)}] If $B^n_2\subset K$ is the John ellipsoid of $K$, then $W(K)\leq W(B^n_\infty)$ and $\ell(K)\geq \ell(B^n_\infty)$.
  Equality holds in either case if and only if $K$ is a cube.
\end{enumerate}
%\end{description}
\end{theo}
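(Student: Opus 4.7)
The plan is to reduce both parts to the $\ell$-norm inequalities, proving the L\"owner case of part~(i) by a direct decomposition argument and the John case of part~(ii) via the geometric Brascamp--Lieb inequality, and then to derive the mean width statements from the polar duality identity \eqref{mean-ell}.

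First I would invoke the symmetric version of John's theorem, obtaining contact points $u_1,\ldots,u_m\in\partial K\cap S^{n-1}$ (centrally symmetric, with matching weights $c_1,\ldots,c_m>0$) such that $\sum_i c_i\, u_i\otimes u_i=\Id$, and hence $\sum_i c_i=n$ and every $x\in\R^n$ admits the representation $x=\sum_i c_i\langle x,u_i\rangle u_i$. In the L\"owner setting of part~(i), each $u_i$ lies in $K$, so $K\supset P:=\conv\{\pm u_1,\ldots,\pm u_m\}$; in the John setting of part~(ii), the hyperplane $\{\langle x,u_i\rangle=1\}$ supports $K$ at $u_i$, so $K\subset C:=\{x:|\langle x,u_i\rangle|\le 1\text{ for all }i\}$.

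For part~(i), the identity decomposition $x=\sum_i (c_i\langle x,u_i\rangle)\,u_i$ is an admissible representation for the crosspolytope norm of $P$, giving $\|x\|_P\le\sum_i c_i|\langle x,u_i\rangle|$. Since $\langle X,u_i\rangle$ is standard Gaussian whenever $X\sim\gamma_n$, taking expectations yields
$$
\ell(K)\le\E\|X\|_P\le\sum_i c_i\,\E|\langle X,u_i\rangle|=n\,\E|X_1|=\ell(B^n_1).
$$
For part~(ii), I would apply the Gaussian Brascamp--Lieb inequality to the indicators $1_{[-t,t]}$, obtaining
$$
\gamma_n(tK)\le\gamma_n(tC)\le\prod_i\gamma_1([-t,t])^{c_i}=\gamma_1([-t,t])^n=\gamma_n(tB^n_\infty),
$$
and integrating in $t$ via \eqref{ellGaussianint} to get $\ell(K)\ge\ell(B^n_\infty)$. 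The mean width statements follow from \eqref{mean-ell} by polar duality: $K$ has John ellipsoid $B^n_2$ if and only if $K^\circ$ has L\"owner ellipsoid $B^n_2$, and $(B^n_1)^\circ=B^n_\infty$, so the $\ell$-inequality proved for $K^\circ$ in the opposite setting converts directly into the claimed $W$-inequality for $K$.

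The main obstacle I anticipate is the equality analysis. In part~(i), equality forces both $K=P$ and, by continuity, $\|x\|_P=\sum_i c_i|\langle x,u_i\rangle|$ for every $x$; evaluating at $x=u_j$ and comparing with the identity $1=\|u_j\|^2=\sum_i c_i\langle u_j,u_i\rangle^2$ yields
$$
\sum_i c_i\,|\langle u_j,u_i\rangle|\bigl(1-|\langle u_j,u_i\rangle|\bigr)=0,
$$
forcing $|\langle u_j,u_i\rangle|\in\{0,1\}$ for every pair; combined with the identity decomposition, this makes $\{u_i\}$ (up to sign and multiplicity) an orthonormal basis, so $K$ is a regular crosspolytope. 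In part~(ii), the pointwise equality $\gamma_n(tK)=\gamma_n(tB^n_\infty)$ for every $t>0$ triggers Barthe's equality characterization in the Brascamp--Lieb inequality, which forces an orthogonal splitting of $\R^n$ along the $u_i$ and then $K=C$ to be the cube.
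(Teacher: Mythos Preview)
Your proposal is correct and follows essentially the same route as the paper: the L\"owner $\ell$-inequality is proved via the decomposition $x=\sum_i c_i\langle x,u_i\rangle u_i$ and monotonicity through $P=\conv\{\pm u_i\}$ (the paper's Theorem~\ref{ell-sym-Loewner}), the John $\ell$-inequality via Brascamp--Lieb applied to truncated Gaussians, which is equivalent to your Gaussian Brascamp--Lieb with indicators (the paper's Proposition~\ref{gammanbP-prop} and Theorem~\ref{ell-sym-John}), and the mean width statements via \eqref{mean-ell} and polarity. Your equality analyses also match the paper's; in particular your subtraction argument $\sum_i c_i|\langle u_j,u_i\rangle|(1-|\langle u_j,u_i\rangle|)=0$ is exactly the paper's computation in the proof of Theorem~\ref{ell-sym-Loewner}, and your appeal to Barthe's equality characterization for part~(ii) is what the paper invokes as well.
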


It follows from (\ref{mean-ell}) and the duality of L\"owner and John ellipsoids that the first statement of (i) is equivalent to
the first statement of (ii), and the second statement of (i) is equivalent to
the second statement of (ii).

Let us briefly discuss the range of $W(K)$ (and thus that of $\ell(K)$) in Theorem~\ref{meanw-sym}. Note that
$W(K)=\frac{2\kappa_{n-1}}{n\kappa_n}\cdot V_1(K)$ ({\it cf.} Schneider \cite[(5.31) and (5.57)]{Sch14} or \cite[(3.18)]{HW20}), where $V_1(K)$ is the first intrinsic volume of $K$, 
and $\frac{\kappa_{n-1}}{\kappa_n}\sim \sqrt{\frac{n}{2\pi}}$, as $n$ tends to infinity.
If $K\subset\R^n$ is an $o$-symmetric convex body  whose L\"owner ellipsoid is $B_2^n$,
then the monotonicity of the mean width and Theorem~\ref{meanw-sym} (i) yield
$$
W(B^n_1)\leq W(K)\leq W(B^n_2)=2,
$$
where Lemma~3.1 in B\"or\"oczky, Henk \cite{BoH99} yields that
$$
W(B^n_1)\sim \sqrt{\frac{2\ln n}n}\mbox{ \ as $n\to\infty$}.
$$
In addition, $V_1(B^n_\infty)=2n$ as $V_1$ is additive. If $K$ is a convex body in $\R^n$ whose John ellipsoid is $B^n_2$,
then
$$
2=W(B^n_2)\leq W(K)\leq W(B^n_\infty),
$$
where $W(B^n_\infty)\sim \sqrt{8n/\pi}$. As a consequence, we also have $\ell(B^n_\infty)\sim \sqrt{\frac{\ln n}{2}}$  as $n\to\infty$ and $\ell(B^n_1)= \sqrt{\frac{2}{\pi}}\cdot n$ for $n\ge 2$.

The main goal of this paper is to prove a close to optimal stability version of Theorem~\ref{meanw-sym}. In the stronger version of Theorem~\ref{meanw-sym}, closeness of two compact sets $X,Y\subset\R^n$ is measured in terms of their Hausdorff distance  
$$
\delta_{\rm H}(X,Y)=\max\{\max_{y\in Y}d(y,X),\max_{x\in X}d(x,Y)\},
$$
where $d(x,Y)=\min\{\|x-y\|:y\in Y\}$ is the distance of $x$ from $Y$. The Hausdorff distance defines a metric on the set of non-empty compact subsets of $\R^n$. Let $O(n)$ denote the orthogonal group of $\R^n$.
Since we are interested in the Hausdorff distance up to orthogonal transformations, we also consider the metric
$$
\delta_{\rm HO}(X,Y)=\min\{\delta_{\rm H}(X,\Phi Y):\,\Phi\in O(n)\}.
$$

We start with the case when $B^n_2$ is the L\"owner ellipsoid of a convex body $K\subset B^n_2$, and then state
the stability result when $B^n_2$ is the John ellipsoid of a convex body $K\subset \R^n$.

\begin{theo}
\label{Lowner-stab}
If $B^n_2$ is the L\"owner ellipsoid of an origin symmetric convex body $K\subset B^n_2$, then
\begin{align}
\label{Lowner-stab-ell}
\ell(K)&\leq \left(1-\gamma\cdot\delta_{\rm HO}(K,B^n_1)\right)\ell(B^n_1),\\
\label{Lowner-stab-W}
W(K)&\geq \left(1+\gamma\cdot\delta_{\rm HO}(K,B^n_1)^{4n}\right)W(B^n_1),
\end{align}
  where $\gamma=n^{-cn^2}$ with an absolute constant $c>0$.
\end{theo}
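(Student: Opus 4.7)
The plan is to reduce the body $K$ to the convex hull $P_\mu$ of its Löwner contact points, apply a stability version of the isotropic-measure inequality that gives $\ell(P_\mu)\le \ell(B^n_1)$, and then transfer the Hausdorff improvement from $P_\mu$ back to $K$. The mean width bound (\ref{Lowner-stab-W}) is derived from its $\ell$-norm John-case analogue via (\ref{mean-ell}) together with polar conversion of Hausdorff distances.

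By the Löwner characterization there exist contact points $u_1,\dots,u_k\in\partial K\cap S^{n-1}$ and weights $c_i>0$ with $\sum_i c_i\,u_i\otimes u_i=I_n$; $o$-symmetry of $K$ allows us to pair the $u_i$ as $\pm$-pairs, so $\mu:=\sum_i c_i\delta_{u_i}$ is an even isotropic measure. Writing $P_\mu:=\conv\{\pm u_i\}\subset K\subset B^n_2$, one has $\ell(K)\le \ell(P_\mu)$. The isotropic decomposition $x=\sum_i c_i\langle x,u_i\rangle u_i$ gives $\|x\|_{P_\mu}\le \sum_i c_i|\langle x,u_i\rangle|$, whence the Barthe-type bound $\ell(P_\mu)\le \sum_i c_i\sqrt{2/\pi}=\ell(B^n_1)$ by integration against $\gamma_n$. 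A quantitative analysis of the pointwise slack $\sum_i c_i|\langle x,u_i\rangle|-\|x\|_{P_\mu}\ge 0$---one of the auxiliary stability results for even isotropic measures announced in the abstract---yields
$$
\ell(P_\mu)\le\bigl(1-\eta\,\delta_{\mathrm{HO}}(P_\mu,B^n_1)\bigr)\,\ell(B^n_1)
$$
with $\eta=n^{-c_1 n^2}$; this encodes the fact that $\mu$ must be close to an orthonormal basis measure whenever the Barthe bound is nearly tight.

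It remains to pass from $P_\mu$ to $K$. The gap $\ell(P_\mu)-\ell(K)=\int_{\R^n}(\|x\|_{P_\mu}-\|x\|_K)\,\gamma_n(\dint x)\ge 0$ is controlled by the Hausdorff distance: a direction $u_0\in S^{n-1}$ realizing $\delta_H(K,P_\mu)=h_K(u_0)-h_{P_\mu}(u_0)$ produces a slab in $K\setminus P_\mu$ which, thanks to $P_\mu,K\subset B^n_2$, has positive Gaussian mass, giving $\delta_H(K,P_\mu)\le n^{c_2 n}(\ell(P_\mu)-\ell(K))$. The triangle inequality $\delta_{\mathrm{HO}}(P_\mu,B^n_1)\ge \delta_{\mathrm{HO}}(K,B^n_1)-\delta_H(K,P_\mu)$ combined with the isotropic-measure stability above yields (\ref{Lowner-stab-ell}) with $\gamma=n^{-cn^2}$. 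For (\ref{Lowner-stab-W}) use (\ref{mean-ell}) to write $W(K)=\tfrac{2}{\ell(B^n_2)}\ell(K^\circ)$; polar duality sends the Löwner ellipsoid of $K$ to the John ellipsoid of $K^\circ$ and $B^n_1$ to $B^n_\infty$, and one applies the John-case $\ell$-norm stability (the dual analogue of the above, whose proof relies on the direct Brascamp--Lieb inequality for the slab intersection $Q=\bigcap_i\{|\langle\cdot,u_i\rangle|\le 1\}$) to $K^\circ$. Converting the resulting bound on $\delta_{\mathrm{HO}}(K^\circ,B^n_\infty)$ into one on $\delta_{\mathrm{HO}}(K,B^n_1)$ via polar duality distorts by a polynomial-in-$n$ factor, since $B^n_1$ has inradius $1/\sqrt n$ while $B^n_\infty$ has circumradius $\sqrt n$; this distortion, amplified to the exponent, produces the $\delta^{4n}$ in (\ref{Lowner-stab-W}).

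The principal obstacle is the quantitative isotropic-measure stability with the precise $n^{-cn^2}$ dependence: one must show that any deviation of $\mu$ from an orthonormal basis measure forces a definite improvement of the integrated Barthe bound, expressed in terms of $\delta_{\mathrm{HO}}(P_\mu,B^n_1)$. This likely proceeds via a careful mass-transportation refinement of the equality analysis in the Brascamp--Lieb/reverse Brascamp--Lieb framework, tracking how a spectral perturbation of the isotropic decomposition propagates to a geometric perturbation of the support of $\mu$. The Gaussian-to-Hausdorff conversion in Step 3 is a secondary but still delicate ingredient, since the crosspolytope is thin and the spherical caps relevant to $\delta_H(K,P_\mu)$ carry only exponentially small Gaussian mass, accounting for the severely small size of the constant $\gamma$.
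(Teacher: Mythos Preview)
Your high-level strategy is the same as the paper's: for \eqref{Lowner-stab-ell} one passes to the polytope $P_\mu=\conv\{\pm u_i\}$, proves a stability version of $\ell(P_\mu)\le\ell(B^n_1)$ in terms of $\delta_{\rm HO}(P_\mu,B^n_1)$, and then controls $\delta_{\rm H}(K,P_\mu)$ through the gap $\ell(P_\mu)-\ell(K)$; for \eqref{Lowner-stab-W} one dualizes to $K^\circ$, applies the John-case $\ell$-norm stability, and converts back via \eqref{mean-ell}. However, two of your substantive claims about \emph{how} the hard steps are carried out do not match the paper and are misleading.

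First, you identify the ``principal obstacle'' as the stability of $\ell(P_\mu)\le\ell(B^n_1)$ and conjecture it ``likely proceeds via a careful mass-transportation refinement of the equality analysis in the Brascamp--Lieb/reverse Brascamp--Lieb framework''. The paper explicitly does \emph{not} do this. The L\"owner-side stability (Theorem~\ref{ell-sym-Loewner-stab}) is obtained by a direct geometric argument: using Cauchy--Binet one finds $n$ contact directions $u_1,\dots,u_n$ with $c_1\cdots c_n|\det[u_1,\dots,u_n]|^2\ge(en)^{-n}$, and then, if some further $u_k$ has $|\langle u_i,u_k\rangle|\le\cos\eta$ for all $i\le n$, one constructs an explicit narrow cone $C$ around a direction $w$ in which the pointwise slack $\sum_i c_i|\langle x,u_i\rangle|-\|x\|_M$ is bounded below by a multiple of $\eta\|x\|$. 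Integrating over $C$ and using the lower bound $\gamma_n(C)\ge\sin^{n-1}(\alpha)/\sqrt{2\pi n}$ forces $\eta\le n^{cn^2}\varepsilon$, after which Lemma~\ref{almostort0} produces the nearby orthonormal basis. No transport map and no Brascamp--Lieb inequality enter this side of the argument at all.

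Second, your account of the exponent $4n$ in \eqref{Lowner-stab-W} is incorrect. You attribute it to distortion under polar duality when passing from $\delta_{\rm HO}(K^\circ,B^n_\infty)$ to $\delta_{\rm HO}(K,B^n_1)$. In the paper the polar conversion (Lemma~\ref{deltaHdilate}) costs only a factor polynomial in $n$; the exponent $4n$ is already present in the John-case $\ell$-norm stability Theorem~\ref{ell-sym-John-stab}. There the Brascamp--Lieb analysis (Proposition~\ref{gammanbP-stab}) yields a deficit of order $\delta_{\rm vol}(P,\Phi B^n_\infty)^4$, and the passage from volume difference to Hausdorff distance for bodies between $B^n_2$ and $\sqrt{n}B^n_2$ (Lemma~\ref{deltaHvolCube}) costs an exponent $n$, producing $\delta_{\rm H}^{4n}$. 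So the $4n$ is a feature of the John-side estimate, not of dualization.
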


\begin{theo}
\label{John-stab}
If $B^n_2$ is the John ellipsoid of an origin symmetric convex body $K\subset \R^n$, then
\begin{align}
\label{John-stab-ell}
\ell(K)&\geq \left(1+\gamma\cdot\delta_{\rm HO}(K,B^n_\infty)^{4n}\right)\ell(B^n_\infty),\\
\label{John-stab-W}
W(K)&\leq \left(1-\gamma\cdot\delta_{\rm HO}(K,B^n_\infty)\right)W(B^n_\infty),
\end{align}
 where $\gamma=n^{-cn^2}$ with an absolute constant $c>0$.
\end{theo}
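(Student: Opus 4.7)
\emph{Proof plan.} The natural strategy is to derive Theorem~\ref{John-stab} from Theorem~\ref{Lowner-stab} by polar duality. Since $B^n_2$ is the John ellipsoid of $K$ if and only if $B^n_2$ is the L\"owner ellipsoid of $K^\circ$, and since $(B^n_\infty)^\circ=B^n_1$, the body $K^\circ\subset B^n_2$ is admissible for Theorem~\ref{Lowner-stab}, which yields
\begin{align*}
\ell(K^\circ)&\leq\bigl(1-\gamma\cdot\delta_{\rm HO}(K^\circ,B^n_1)\bigr)\ell(B^n_1),\\
W(K^\circ)&\geq\bigl(1+\gamma\cdot\delta_{\rm HO}(K^\circ,B^n_1)^{4n}\bigr)W(B^n_1).
\end{align*}
Applying~\eqref{mean-ell} to both $K,B^n_\infty$ and $K^\circ,B^n_1$ gives the identities $\ell(K)/\ell(B^n_\infty)=W(K^\circ)/W(B^n_1)$ and $W(K)/W(B^n_\infty)=\ell(K^\circ)/\ell(B^n_1)$; substituting turns the two displays above into exactly \eqref{John-stab-ell} and \eqref{John-stab-W}, \emph{except} that $\delta_{\rm HO}(K^\circ,B^n_1)$ appears in place of $\delta_{\rm HO}(K,B^n_\infty)$.

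The remaining task is to establish a comparison of the form $\delta_{\rm HO}(K,B^n_\infty)\leq n^{c_0}\,\delta_{\rm HO}(K^\circ,B^n_1)$ for some absolute $c_0$; any such polynomial-in-$n$ loss, even when raised to the $4n$-th power in~\eqref{John-stab-ell}, is easily absorbed by mildly increasing the constant $c$ in $\gamma=n^{-cn^2}$. I would derive this from the quantitative Lipschitz continuity of polar duality on convex bodies trapped between two scaled Euclidean balls. By John's theorem $B^n_2\subset K\subset nB^n_2$, and trivially $B^n_2\subset B^n_\infty\subset\sqrt{n}\,B^n_2$, so after taking polars both $K^\circ$ and $B^n_1$ lie in the class of $o$-symmetric convex bodies $L$ with $\tfrac{1}{n}B^n_2\subset L\subset B^n_2$. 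On this class, the pointwise identity $h_{L^\circ}(u)=1/\rho_L(u)$ for $u\in S^{n-1}$, combined with the standard estimate $|\rho_L(u)-\rho_{L'}(u)|\leq (R/r)\,\delta_{\rm H}(L,L')$ (obtained by examining a supporting hyperplane to $L$ at the boundary point $\rho_L(u)u$, whose outer normal subtends with $u$ an angle at least $\arcsin(r/R)$), yields $\delta_{\rm H}(L^\circ,{L'}^\circ)\leq (R/r^3)\,\delta_{\rm H}(L,L')$. Applied to $K^\circ$ and an orthogonal image of $B^n_1$ realizing the infimum in $\delta_{\rm HO}(K^\circ,B^n_1)$, this gives $\delta_{\rm HO}(K,B^n_\infty)\leq n^3\,\delta_{\rm HO}(K^\circ,B^n_1)$, so $c_0=3$ suffices.

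The principal obstacle throughout is this Lipschitz step: everything hinges on bookkeeping the polynomial factors in $n$, because the John ellipsoid hypothesis permits $K$ to have an outradius as large as $n$, so polarity can stretch the Hausdorff distance by a factor of order $n^3$. Fortunately the rapid decay $\gamma=n^{-cn^2}$ leaves ample room to absorb any $n^{O(n)}$ loss, so once the Lipschitz comparison is in place Theorem~\ref{John-stab} follows from Theorem~\ref{Lowner-stab} by the purely formal duality manipulation described above.
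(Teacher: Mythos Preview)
Your derivation of \eqref{John-stab-W} via polarity from \eqref{Lowner-stab-ell} is exactly the paper's route. The paper handles the Hausdorff-distance comparison through Lemma~\ref{deltaHdilate}, which passes to the ``dilation distance'' $(1+t)^{-1}C\subset K\subset(1+t)C$ (a relation invariant under polarity) and back, picking up a factor of only $n$ rather than your $n^3$; but either loss is absorbed into $\gamma=n^{-cn^2}$, so this is cosmetic.

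Your derivation of \eqref{John-stab-ell}, however, has a circularity problem. You propose to obtain it from \eqref{Lowner-stab-W}, treating Theorem~\ref{Lowner-stab} as already established. But in the paper Theorems~\ref{Lowner-stab} and~\ref{John-stab} are proved \emph{together} in Section~\ref{secTheorem1314}: the two $\ell$-norm inequalities \eqref{Lowner-stab-ell} and \eqref{John-stab-ell} come directly from the independent stability results Theorem~\ref{ell-sym-Loewner-stab} and Theorem~\ref{ell-sym-John-stab}, and only afterwards are the two mean-width inequalities \eqref{Lowner-stab-W} and \eqref{John-stab-W} obtained by duality. In particular, \eqref{Lowner-stab-W} is itself deduced from \eqref{John-stab-ell} (equivalently, from Theorem~\ref{ell-sym-John-stab}), not the other way around. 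So your route for \eqref{John-stab-ell} presupposes precisely the substantive result it is meant to establish. The real content of \eqref{John-stab-ell} is Theorem~\ref{ell-sym-John-stab}, whose proof uses a stability form of the rank-one Brascamp--Lieb inequality (Proposition~\ref{gammanbP-stab} and Lemma~\ref{Ball-Barthe-stab}) and cannot be recovered by duality from the L\"owner side alone.
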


Concerning the optimality of Theorem~\ref{John-stab},  let $K_\varepsilon$, for sufficiently small $\varepsilon>0$, be obtained from the cube $B^n_\infty$ by cutting off each vertex $v$ of $B^n_\infty$ by the half space
$\{x\in\R^n:\langle x,v\rangle\leq (1-\varepsilon)\langle v,v\rangle\}$. The resulting body satisfies that
$\delta_{\rm HO}(K_\varepsilon,B^n_\infty)\geq \gamma_1\varepsilon$, 
$\ell(K_\varepsilon)\leq \left(1+\gamma_2\varepsilon^{n}\right)\ell(B^n_\infty)$ and
$W(K_\varepsilon)\geq \left(1-\gamma_3\varepsilon\right)W(B^n_\infty)$ for some $\gamma_1,\gamma_2,\gamma_3>0$ depending on $n$; therefore, 
\begin{itemize}
\item the estimate \eqref{John-stab-W} is optimal up to the factor depending on $n$, and 
\item the exponent $4n$ of $\delta_{\rm HO}(K,B^n_\infty)$ in \eqref{John-stab-ell} cannot be replaced by anything smaller than $n$.
\end{itemize}

 For the optimality of Theorem~\ref{Lowner-stab}, we consider the polar
$K_\varepsilon^\circ$ for sufficiently small $\varepsilon>0$, and hence
the estimate \eqref{Lowner-stab-ell} is optimal up to the factor depending on $n$, and the exponent $4n$ of $\delta_{\rm HO}(K,B^n_1)$ in \eqref{Lowner-stab-W} cannot be replaced by anything smaller than $n$.

Finally, in the statement of relation \eqref{Lowner-stab-W} the factor $\gamma$ can be chosen of the order $n^{-cn}$ with an absolute constant $c>0$ and $\delta_{\rm HO}(K,B^n_1)^{4n}$ can be replaced by $\delta_{\rm volO}(K,B^n_1)^{4}$, where for origin symmetric convex bodies $X,Y\subset\R^n$ we define
$$
\delta_{\rm volO}(X,Y)=\min\{\delta_{\rm vol}(X,\Phi Y):\,\Phi\in O(n)\}
$$
and $\delta_{\rm vol}$ is the symmetric (volume) difference metric. Similar comments apply to relation \eqref{John-stab-ell}.

An important concept underlying the proof of Theorem~\ref{meanw-sym} is the notion of an isotropic measure on the unit sphere. 
A Borel measure $\mu$ on the unit sphere $S^{n-1}$ is called isotropic ({\it cf.}~\cite{GianPapa1999,Lutwak1}) if
\begin{equation}
\label{isotropic-def}
\int_{S^{n-1}}u\otimes u\, \mu(\dint u)=\Id_n,
\end{equation}
where $\Id_n$ is the {identity map (or the identity matrix)}. 
Equating traces of the two sides of (\ref{isotropic-def}), we get 
$\mu(S^{n-1})=n$.
If $\mu$ is an even isotropic measure on $S^{n-1}$, then the cardinality of its support satisfies $|{\rm supp}\,\mu|\geq 2n$, with equality if and only if $\mu$ is concentrated on the vertices of some regular crosspolytope and each vertex has  measure $\frac12$. 
If $u_1,\ldots,u_k\in S^{n-1}$ and $c_1,\ldots,c_k>0$ satisfy  $u_i\neq \pm u_j$ for $i\neq j$ and  
$$
\sum_{i=1}^kc_iu_i\otimes u_i=\Id_n,
$$
then the discrete  measure $\mu$ on $S^{n-1}$ with support $\{\pm u_1,\ldots,\pm u_k\}$ and $\mu(\{u_i\})=\mu(\{-u_i\})=c_i/2$ for
$i=1,\ldots,k$ is isotropic and even. If $k=n$, $u_1,\ldots,u_n$ form an orthonormal basis of $\R^n$ and each $c_i=1$, then such a measure is called a {\it cross measure}, and is characterized by the properties that it is an even isotropic measure whose support consists of the vertices of a regular crosspolytope.

We recall that isotropic measures on $\R^n$ play a central role in the
KLS conjecture \cite{KLM95} and in the analysis of Bourgain's hyperplane conjecture (slicing problem) (see, for instance, \cite{BCE13,GuM11,Kla09,AGM15,AGB15}). 
The relevance of isotropic measures on $S^{n-1}$ in the present context is due to Ball's crucial insight that John's characteristic condition \cite{Joh37,Joh48} (see Section \ref{secLoewnerBall}) for a convex body to have the unit ball as its John or L\"owner ellipsoid
 (see \cite{Bal89,Bal91a}) can be used to give the Brascamp--Lieb inequality a convenient form which is perfectly  suited  for many
 geometric applications 
(see Section~\ref{secBrascamp-Lieb}).

We write ${\rm conv}\,X$ to denote the convex hull of a set $X\subset \R^n$, and for an even isotropic measure $\mu$ on $S^{n-1}$, we consider the $o$-symmetric convex bodies
$$
Z_\infty(\mu)={\rm conv}\,{\rm supp}\,\mu\subset B^n_2\mbox{ \ and \ }Z^*_\infty(\mu)=Z_\infty(\mu)^\circ\supset B^n_2.
$$
In particular, if $\nu$ is a cross measure on $S^{n-1}$, then
$$
Z_\infty(\nu)=B^n_1\mbox{ \ and \ }Z^*_\infty(\nu)=B^n_\infty.
$$
Li, Leng \cite{LiL12} proved the following version of Theorem~\ref{meanw-sym}.  The result 
can be obtained as an immediate consequence of Theorem \ref{meanw-sym} and Lemma \ref{JohnLimited}.

 \begin{theo}[Li, Leng \cite{LiL12}]
\label{meanw-iso-measure}
Let $\nu$ be a cross measure on $S^{n-1}$. If  $\mu$ is an even isotropic measure, then 
%\begin{description}
\begin{enumerate}
\item[{\rm (i)}]  $\ell(Z_\infty(\mu))\leq \ell(Z_\infty(\nu))$
and $W(Z_\infty(\mu))\geq W(Z_\infty(\nu))$, and  equality holds in either case if and only if $\mu$ is a cross measure.

\item[{\rm (ii)}] $\ell(Z^*_\infty(\mu))\geq \ell(Z^*_\infty(\nu))$
and $W(Z^*_\infty(\mu))\leq W(Z^*_\infty(\nu))$, and  equality holds in either case if and only if $\mu$ is a cross measure.
\end{enumerate}
%\end{description}
\end{theo}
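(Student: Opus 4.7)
The plan is to reduce Theorem~\ref{meanw-iso-measure} to Theorem~\ref{meanw-sym} by showing that $B^n_2$ is the L\"owner ellipsoid of $Z_\infty(\mu)$ whenever $\mu$ is an even isotropic measure on $S^{n-1}$. Once this identification is in place, part (i) is immediate from Theorem~\ref{meanw-sym} (i) applied to $K=Z_\infty(\mu)$, together with the identities $Z_\infty(\nu)=B^n_1$ and $Z^*_\infty(\nu)=B^n_\infty$ that hold when $\nu$ is a cross measure. Part (ii) then follows via polarity: since John and L\"owner ellipsoids are dual to each other, $B^n_2$ is the John ellipsoid of $Z^*_\infty(\mu)=Z_\infty(\mu)^\circ$, so Theorem~\ref{meanw-sym} (ii) applies.

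The identification of the L\"owner ellipsoid of $Z_\infty(\mu)$ is the content of Lemma~\ref{JohnLimited} and relies on John's classical criterion. In the version suited to isotropic measures, the criterion asserts that if $K\subset B^n_2$ is an $o$-symmetric convex body with $\mathrm{supp}\,\mu\subset S^{n-1}\cap\partial K$ and
$$
\int_{S^{n-1}}u\otimes u\,\mu(\dint u)=\Id_n,\qquad \int_{S^{n-1}}u\,\mu(\dint u)=0,
$$
then $B^n_2$ is the L\"owner ellipsoid of $K$. For $K=Z_\infty(\mu)$, the inclusion $\mathrm{supp}\,\mu\subset\partial K$ holds by construction (each point of the support lies in $K\cap S^{n-1}\subset\partial K$, because $K\subset B^n_2$); the first identity is the very definition of isotropy; and the centering condition holds because $\mu$ is even. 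If Lemma~\ref{JohnLimited} is stated only for discrete configurations, an approximation by finitely supported even isotropic measures will bridge the gap.

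For the equality cases in (i), suppose that one of the inequalities is in fact an equality. By the equality characterization in Theorem~\ref{meanw-sym} (i), the body $T:=Z_\infty(\mu)$ is a regular crosspolytope inscribed in $B^n_2$. Since the only points of $T$ on $S^{n-1}$ are its $2n$ vertices, $\mathrm{supp}\,\mu\subset S^{n-1}\cap T$ is contained in the vertex set $\{\pm u_1,\ldots,\pm u_n\}$ of $T$, where $u_1,\ldots,u_n$ is an orthonormal basis; moreover $T=\conv\,\mathrm{supp}\,\mu$ forces $\mathrm{supp}\,\mu$ to contain every vertex. Setting $c_i=\mu(\{u_i\})+\mu(\{-u_i\})$ and expanding the isotropy identity in the basis $(u_i)$ gives $c_i=1$ for each $i$, so $\mu$ is a cross measure. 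The equality case in (ii) follows by the same argument applied to the polar. The main obstacle I expect is a clean formulation of Lemma~\ref{JohnLimited} that handles an arbitrary, possibly continuous, even isotropic measure; once that lemma is in hand, the theorem is a direct consequence of Theorem~\ref{meanw-sym} and elementary polarity.
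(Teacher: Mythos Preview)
Your approach is correct and matches the paper's: reduce to Theorem~\ref{meanw-sym} by showing that $B^n_2$ is the L\"owner ellipsoid of $Z_\infty(\mu)$ (and hence the John ellipsoid of $Z^*_\infty(\mu)$), then read off the equality characterization exactly as you describe. One small clarification: Lemma~\ref{JohnLimited} does not itself identify the L\"owner ellipsoid; rather, it extracts from $\mu$ a \emph{discrete} even isotropic measure $\mu_0$ with $\mathrm{supp}\,\mu_0\subset\mathrm{supp}\,\mu$, after which the discrete John criterion \eqref{John-iso} gives that $B^n_2$ is the L\"owner ellipsoid of $Z_\infty(\mu_0)$, and the sandwich $Z_\infty(\mu_0)\subset Z_\infty(\mu)\subset B^n_2$ transfers this to $Z_\infty(\mu)$ --- which is precisely the ``discretization bridge'' you anticipated.
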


In order to state a stability version of Theorem ~\ref{meanw-iso-measure}, we need the notion of Wasserstein distance $\delta_{\rm W}(\mu,\nu)$ of two isotropic measures $\mu$ and $\nu$  on $S^{n-1}$ (also called the Kantorovich--Monge--Rubinstein distance). To define it, we write
$\angle(v,w)$ to denote the angle of $v, w\in S^{n-1}$; that is, the geodesic distance of $v$ and $w$ on $S^{n-1}$. Let
 ${\rm Lip}_1(S^{n-1})$ denote the family of Lipschitz functions with Lipschitz constant $1$; namely, $f:\,S^{n-1}\to\R$ is in ${\rm Lip}_1(S^{n-1})$ if 
$\|f(x)-f(y)\|\leq \angle(x,y)$ for
$x,y\in S^{n-1}$. Then
$$
\delta_{\rm W}(\mu,\nu)=\max\left\{\int_{S^{n-1}}f\,d\mu-\int_{S^{n-1}}f\,d\nu:\,f\in {\rm Lip}_1(S^{n-1})\right\}.
$$
Since we allow rotation of one of the measures, the actual notion of distance that we use is
$$
\delta_{\rm WO}(\mu,\nu)=\min\left\{\delta_{\rm W}(\mu,\Phi_*\nu):\,\Phi\in O(n)\right\}
$$
where $\Phi_*\nu$ denotes the pushforward of $\nu$ by $\Phi: S^{n-1}\to S^{n-1}$. 
In turn, we have the following stronger form of Theorem ~\ref{meanw-iso-measure}.

\begin{theo}
\label{meanw-iso-measure-stab}
Let $\mu$ be an even isotropic measure  on $S^{n-1}$.
There exist an absolute constant $c>0$ and a cross measure $\nu$ on $S^{n-1}$ such that for $\gamma=n^{-cn^2}$,  for (a) and (d), and $\gamma=n^{-cn}$,  for (b) and (c), and 
$\tilde{\gamma}=\frac{\gamma}{30n^3}$, we have
\begin{enumerate}
\item[{\rm (a)}] $\begin{array}[t]{rcl}
\ell(Z_\infty(\mu))&\leq& \left(1-\gamma\cdot\delta_{\rm HO}({\rm supp}\,\mu,{\rm supp}\,\nu)\right)\ell(Z_\infty(\nu))\\[1ex]
&\leq &\left(1-\tilde{\gamma}\cdot\delta_{\rm WO}(\mu,\nu)\right)\ell(Z_\infty(\nu)).
\end{array}
$ 
\item[{\rm (b)}] $\begin{array}[t]{rcl}
W(Z_\infty(\mu))&\geq& \left(1+\gamma\cdot\delta_{\rm HO}({\rm supp}\,\mu,{\rm supp}\,\nu)^{4}\right)W(Z_\infty(\nu))\\[1ex]
&\geq &\left(1+\tilde{\gamma}\cdot\delta_{\rm WO}(\mu,\nu)^{4}\right)W(Z_\infty(\nu)).
\end{array}
$ 
\item[{\rm (c)}] $\begin{array}[t]{rcl}
\ell(Z^*_\infty(\mu))&\geq& \left(1+\gamma\cdot\delta_{\rm HO}({\rm supp}\,\mu,{\rm supp}\,\nu)^{4}\right)\ell(Z^*_\infty(\nu))\\[1ex]
&\geq &\left(1+\tilde{\gamma}\cdot\delta_{\rm WO}(\mu,\nu)^{4}\right)\ell(Z_\infty(\nu)).
\end{array}
$ 
\item[{\rm (d)}] $\begin{array}[t]{rcl}
W(Z^*_\infty(\mu))&\leq& \left(1-\gamma\cdot\delta_{\rm HO}({\rm supp}\,\mu,{\rm supp}\,\nu)\right) W(Z^*_\infty(\nu))\\[1ex]
&\leq &\left(1-\tilde{\gamma}\cdot\delta_{\rm WO}(\mu,\nu)\right)W(Z_\infty(\nu)).
\end{array}
$ 
\end{enumerate}
\end{theo}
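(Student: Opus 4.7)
The plan is to reduce the four estimates to two via polar duality, then to invoke Theorems~\ref{Lowner-stab} and~\ref{John-stab} with the observation that for bodies of the form $Z_\infty(\mu)$ or $Z^*_\infty(\mu)$ the relevant contact point measure with $B^n_2$ is precisely $\mu$ itself, and finally to pass from the Hausdorff comparison of supports to a Wasserstein comparison of measures.

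First, from relation~\eqref{mean-ell} together with the duality $Z_\infty(\mu)^\circ=Z^*_\infty(\mu)$ we obtain
$$
\frac{W(Z_\infty(\mu))}{W(Z_\infty(\nu))}=\frac{\ell(Z^*_\infty(\mu))}{\ell(Z^*_\infty(\nu))}\quad\mbox{and}\quad
\frac{W(Z^*_\infty(\mu))}{W(Z^*_\infty(\nu))}=\frac{\ell(Z_\infty(\mu))}{\ell(Z_\infty(\nu))},
$$
so (b) is equivalent to (c) and (d) is equivalent to (a), reducing the task to (a) and (c). By Lemma~\ref{JohnLimited}, $B^n_2$ is the L\"owner ellipsoid of $K:=Z_\infty(\mu)$ and the John ellipsoid of $K':=Z^*_\infty(\mu)$, so Theorems~\ref{Lowner-stab} and~\ref{John-stab} apply to $K$ and $K'$. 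A direct application would yield stability only in terms of $\delta_{\rm HO}(K,B^n_1)$ or $\delta_{\rm HO}(K',B^n_\infty)$, which are at most $\delta_{\rm HO}({\rm supp}\,\mu,{\rm supp}\,\nu)$ since convex hull is $1$-Lipschitz in Hausdorff distance; this alone is therefore strictly weaker than what (a) and (c) claim. The strengthening comes from revisiting the proofs of the two preceding theorems: each factors through a stability form of the Brascamp--Lieb inequality applied to the John isotropic measure of $K$ or $K'$, and for these bodies this isotropic measure is literally $\mu$ --- the extreme points of $K$ on $S^{n-1}$ are the atoms of $\mu$, which are equivalently the contact points of $K'$ with $B^n_2$. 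The intermediate Brascamp--Lieb stability is therefore naturally phrased in terms of how close $\mu$ is to a cross measure, i.e., in terms of $\delta_{\rm HO}({\rm supp}\,\mu,{\rm supp}\,\nu)$. For (a) this produces exponent $1$ with $\gamma=n^{-cn^2}$. For (c), using instead the volume-difference refinement noted after Theorem~\ref{John-stab} together with the elementary estimate $\delta_{\rm volO}(K',B^n_\infty)\geq c\,\delta_{\rm HO}({\rm supp}\,\mu,{\rm supp}\,\nu)$ (since moving one atom of $\mu$ by $\eta$ cuts off a vertex volume of order $\eta$) yields exponent $4$ with $\gamma=n^{-cn}$.

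For the second inequality in each of (a)--(d) I would establish the comparison $\delta_{\rm WO}(\mu,\nu)\leq 30n^3\,\delta_{\rm HO}({\rm supp}\,\mu,{\rm supp}\,\nu)$ by exhibiting an explicit transport plan. After rotating $\nu$ optimally, partition ${\rm supp}\,\mu$ symmetrically into $2n$ Borel clusters $A_i$ (with $A_{-i}=-A_i$), one per atom $v_i$ of $\nu$, so that $\|u-v_i\|\leq\eta:=\delta_{\rm HO}({\rm supp}\,\mu,{\rm supp}\,\nu)$ for all $u\in A_i$. The isotropy identities $\int u\otimes u\,d\mu=\Id_n=\int u\otimes u\,d\nu$ together with the cluster diameter bound yield
$$
\Bigl\|\Id_n-\sum_i\mu(A_i)\,v_i\otimes v_i\Bigr\|_{\rm op}\leq 2n\eta,
$$
and since the $v_i$ form an orthonormal basis up to sign, diagonalization forces $|\mu(A_i)-\tfrac12|\leq n\eta$. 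Transporting each cluster to $v_i$ at cost $\eta\,\mu(A_i)$, and then redistributing the residual cluster-mass imbalance (of total order $n^2\eta$) across the $2n$ cross-vertices at $O(1)$ spherical cost per unit, yields a coupling whose cost is of polynomial order in $n$ times $\eta$, matching the claimed factor $30n^3$.

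The technically decisive step is the one in the second paragraph: isolating, inside the proofs of Theorems~\ref{Lowner-stab} and~\ref{John-stab}, the intermediate support-distance stability of the Brascamp--Lieb inequality, and checking that it delivers the exponents ($1$ for $\ell$, $4$ for $W$) and multiplicative factors claimed here. Once that intermediate estimate is in hand, the duality reduction in the first block and the Wasserstein comparison in the third are comparatively routine.
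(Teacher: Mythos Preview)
Your reduction of (b)$\Leftrightarrow$(c) and (a)$\Leftrightarrow$(d) via \eqref{mean-ell} is fine, and your Wasserstein--Hausdorff comparison is in the right spirit (the paper does this via Lemma~\ref{HW}, citing \cite{BFH19}). The substantive gap is in the middle paragraph.

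You assert that ``for these bodies this isotropic measure is literally $\mu$ --- the extreme points of $K$ on $S^{n-1}$ are the atoms of $\mu$''. But $\mu$ is an \emph{arbitrary} even isotropic measure: it need not be discrete at all (the uniform measure on $S^{n-1}$ is isotropic), and even when discrete it need not satisfy the bound $k\le n(n+1)/2$ that the proofs of Theorems~\ref{ell-sym-Loewner-stab} and~\ref{ell-sym-John-stab} rely on (e.g.\ through Lemma~\ref{ciuibig} and the estimate $\binom{k}{n}\le (en)^n$). So you cannot plug $\mu$ directly into the Brascamp--Lieb machinery. The paper handles this by first extracting, via a Carath\'eodory argument in the space of symmetric matrices (Lemma~\ref{JohnLimited}), a discrete even isotropic $\mu_0$ with ${\rm supp}\,\mu_0\subset{\rm supp}\,\mu$ and $|{\rm supp}\,\mu_0|\le n(n+1)/2$; the stability argument is then run for $\mu_0$.

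This creates a second problem you do not address: after showing ${\rm supp}\,\mu_0$ is close to some cross ${\rm supp}\,\nu$, one must still show that \emph{every} point of ${\rm supp}\,\mu$ (not just those in ${\rm supp}\,\mu_0$) lies near a vertex of the cross. This is not automatic and requires separate work: for (a)/(d) the paper uses a geometric lemma (Lemma~\ref{Le7.3geom}) showing that a unit vector far from all $\pm e_i$ has norm strictly less than $1$ in $B^n_1$-norm, feeding this into the $\ell$-norm bound; for (b)/(c) it uses a volume cut-off estimate (Lemma~\ref{volumebound}) quantifying how much a hyperplane through a far contact point removes from the cube. Your one-line ``moving one atom of $\mu$ by $\eta$ cuts off a vertex volume of order $\eta$'' gestures at this but does not supply it, and in any case applies only once the discrete reduction is in place.
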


The main task of the present work is to estimate the $\ell$-norm. 
In Section~\ref{secLoewnerBall} we deal with the case when the L\"owner ellipsoid is a ball. This part of the argument avoids the use of Barthe's reverse form of the Brascamp--Lieb inequality and proceeds by a more direct reasoning. To estimate the $\ell$-norm when the John ellipsoid is a ball, first we review the use of optimal transport and the Brascamp--Lieb inequality in Section~\ref{secBrascamp-Lieb}, and secondly we provide some crucial estimates for the transport functions arising in this context in Section~\ref{secAuxiliaryJohn}.
Stability of the $\ell$-norm around the cube when the John ellipsoid is a ball is verified in Section~\ref{secJohnBall} (see Theorem \ref{ell-sym-John-stab}), and we prove Theorem~\ref{Lowner-stab}
and Theorem~\ref{John-stab} in 
 Section~\ref{secTheorem1314}.
Finally, Theorem~\ref{meanw-iso-measure-stab} is established in 
Section~\ref{secTheorem15} via Theorem \ref{theo:theorem 7.4} and Theorem \ref{theo:theorem 7.7}.

Our arguments are partly based on the rank one {geometric} Brascamp--Lieb  inequality 
 and its stability version in a special case (see Section~\ref{secJohnBall}). No quantitative stability version of the Brascamp--Lieb inequality (or of the reverse Brascamp--Lieb inequality) is  known in general
(see Bennett, Bez, Flock, Lee \cite{BBFL18} for a certain weak stability version for higher ranks).
 On the other hand, in the case
of the Borell--Brascamp--Lieb inequaliy (see Borell \cite{Bor75},  Brascamp, Lieb \cite{BrL76} and Balogh, {Krist\'aly} \cite{BaK18}),
 stability versions were obtained by Ghilli, Salani \cite{GhS17} and Rossi, Salani \cite{RoS171}.

\section{The $\ell$-norm when the L\"owner ellipsoid is a ball}
\label{secLoewnerBall}

Theorem~\ref{ell-sym-Loewner} below is  part (i) of Theorem~\ref{meanw-sym}. We start by recalling the argument of Barthe (see \cite[Theorem 2]{Bar98b}) to prove Theorem~\ref{ell-sym-Loewner}. 

In order to efficiently use the hypothesis that the unit ball is the John (or L\"owner) ellipsoid of a symmetric convex body $K$, 
John's \cite{Joh37,Joh48} characteristic condition is employed. It states  that $B^n_2$ is the John ellipsoid (or L\"owner ellipsoid) of an $o$-symmetric convex body $K$ if and only if $B^n_2\subset K$ (or  $K\subset B^n_2$), and there exist distinct unit vectors $u_1,\ldots,u_k\in \partial K\cap S^{n-1}$ and $c_1,\ldots,c_k>0$ such that $u_i\neq \pm u_j$ for $i\neq j$ and
\begin{equation}
\label{John-iso}
\sum_{i=1}^kc_iu_i\otimes u_i=\Id_n,
\end{equation}
with the proof of the equivalence completed by Ball \cite{Bal92} (see also  \cite{GrS05}).

It follows that $B^n_2$ is the John ellipsoid of an $o$-symmetric convex body $K\subset\R^n$ if and only if $B^n_2$ is the  L\"owner ellipsoid of $K^\circ$.

Furthermore, there exists some $m$ element subset $I\subset\{1,\ldots,k\}$ with
$m\leq\frac{n(n+1)}2$ and some $\tilde{c}_i>0$ for $i\in I$ such that
$\sum_{i\in I}\tilde{c}_iu_i\otimes u_i=\Id_n$.

In the following we fix an orthonormal basis $e_1,\ldots,e_n$ of $\R^n$. We also note that
if $u_1,\ldots,u_k\in S^{n-1}$ and $c_1,\ldots,c_k>0$
satisfy 
\begin{equation}\label{John-matrix}
\sum_{i=1}^k c_i u_i \otimes u_i=\Id_n,
\end{equation}
then
this data and any $x\in\R^n$ satisfy
\begin{eqnarray}
\label{John-x-1}
x&=&\sum_{i=1}^kc_i\langle x,u_i\rangle\,u_i,\\
\label{John-x2-1}
\|x\|_2^2&=&\sum_{i=1}^kc_i\langle x,u_i\rangle^2,\\
\label{John-sum-ci}
\sum_{i=1}^kc_i&=&n,\\
\label{John-ci-upper}
c_j&\leq & 1\mbox{ \ \ for }j=1,\ldots,k.
\end{eqnarray}
Here \eqref{John-x-1} and \eqref{John-x2-1} directly follow from \eqref{John-matrix},
\eqref{John-sum-ci}
follows from equating traces, and \eqref{John-x2-1} yields \eqref{John-ci-upper} by taking $x=u_j$.

\begin{theo}[Barthe, Schechtman, Schmuckenschl\"ager]
\label{ell-sym-Loewner}
If $K\subset\R^n$ is an origin symmetric convex body such that $B^n_2\supset K$ is the L\"owner ellipsoid of $K$, then $\ell(K)\leq \ell(B^n_1)$. Equality holds if and only if $K$ is a regular crosspolytope inscribed to $B^n_2$. 
\end{theo}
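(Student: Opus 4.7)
The plan is to exploit John's characteristic condition together with an elementary pointwise bound for the Minkowski functional of $K$. Since $B^n_2$ is the L\"owner ellipsoid of $K\subset B^n_2$, there exist distinct unit vectors $u_1,\ldots,u_k\in\partial K\cap S^{n-1}$ with $u_i\neq\pm u_j$ for $i\neq j$ and weights $c_1,\ldots,c_k>0$ with $\sum_{i=1}^k c_i\,u_i\otimes u_i=\Id_n$.

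The first step I would take is to introduce the $o$-symmetric polytope $P=\mathrm{conv}\{\pm u_1,\ldots,\pm u_k\}$. Since $\pm u_i\in K$ and $K$ is convex, $P\subset K$, so $\|x\|_K\leq \|x\|_P$ for every $x\in\R^n$. One checks that $P=\{\sum_i\lambda_i u_i:\sum_i|\lambda_i|\leq 1\}$, so $\|x\|_P=\min\{\sum_i|\lambda_i|:x=\sum_i\lambda_i u_i\}$. The representation $x=\sum_{i=1}^kc_i\langle x,u_i\rangle u_i$ provided by \eqref{John-x-1} is one admissible choice, whence
\[
\|x\|_K\leq \|x\|_P\leq \sum_{i=1}^k c_i\,|\langle x,u_i\rangle|.
\]

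Next, I would integrate against the standard Gaussian measure $\gamma_n$. Because $\|u_i\|=1$, the scalar $\langle X,u_i\rangle$ is a standard one-dimensional Gaussian and $\mathbb{E}\,|\langle X,u_i\rangle|=\sqrt{2/\pi}$. In view of $\sum_i c_i=n$ from \eqref{John-sum-ci} and the value $\ell(B^n_1)=\sqrt{2/\pi}\,n$ recorded in the introduction, this yields
\[
\ell(K)\leq \sqrt{2/\pi}\sum_{i=1}^k c_i=\sqrt{2/\pi}\cdot n=\ell(B^n_1),
\]
which is the desired inequality.

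The main delicate point is the equality characterization. Equality propagates through both inequalities above, so $\|x\|_K=\|x\|_P$ and $\|x\|_P=\sum_i c_i|\langle x,u_i\rangle|$ for $\gamma_n$-a.e.\ $x$, hence for all $x$ by continuity; in particular $K=P$. Testing the second identity at $x=u_j$, noting that $\|u_j\|_P=1$ because $u_j\in P\cap S^{n-1}$ while $P\subset B^n_2$, and subtracting the identity $1=\sum_i c_i\langle u_j,u_i\rangle^2$ coming from \eqref{John-x2-1}, I would obtain
\[
\sum_{i\neq j}c_i\,|\langle u_j,u_i\rangle|\bigl(1-|\langle u_j,u_i\rangle|\bigr)=0.
\]
Each term is nonnegative, and $u_i\neq\pm u_j$ rules out $|\langle u_j,u_i\rangle|=1$, so $\langle u_j,u_i\rangle=0$ for all $i\neq j$ and every $j$. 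Thus $\{u_1,\ldots,u_k\}$ is orthonormal, forcing $k\leq n$, while $\sum_i c_i=n$ and $c_i\leq 1$ (from \eqref{John-sum-ci} and \eqref{John-ci-upper}) then force $k=n$ and $c_i=1$ for all $i$. Hence $K=P$ is a regular crosspolytope inscribed in $B^n_2$, as claimed.
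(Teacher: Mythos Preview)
Your proof is correct and follows essentially the same route as the paper's own argument: introduce the polytope $P=\mathrm{conv}\{\pm u_i\}$ (called $M$ there), bound $\|x\|_K\le\|x\|_P\le\sum_i c_i|\langle x,u_i\rangle|$ via \eqref{John-x-1}, integrate against $\gamma_n$, and settle the equality case by evaluating at $x=u_j$ and comparing with \eqref{John-x2-1}. The only cosmetic differences are that you invoke the value $\ell(B^n_1)=\sqrt{2/\pi}\,n$ directly rather than rederiving it as in \eqref{ellB1uici}, and you phrase the final orthogonality step as a subtraction rather than a chain of inequalities.
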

\begin{proof}
According to John's  characteristic condition, there exist distinct unit vectors $u_1,\ldots,u_k\in \partial K\cap S^{n-1}$ and $c_1,\ldots,c_k>0$ such that \eqref{John-matrix} holds. 
Moreover, we can assume that $u_i\neq -u_j$ for $i\neq j$. 
We deduce from $\sum_{i=1}^k c_i=n$ ({\it cf.} \eqref{John-sum-ci})
and 
$$\int_{\R^n}|\langle x,e_1\rangle|\,\gamma_n(\dint x)=
\int_{\R^n}|\langle x,v\rangle|\,\gamma_n(\dint x)$$
for any $v\in S^{n-1}$
that
\begin{align}
\nonumber
\ell(B^n_1)&=\int_{\R^n}\|x\|_{B^n_1}\,\gamma_n(\dint x)=
\int_{\R^n}\sum_{i=1}^n|\langle x,e_i\rangle|\,\gamma_n(\dint x)=n\int_{\R^n}|\langle x,e_1\rangle|\,\gamma_n(\dint x)\\
\label{ellB1uici}
&=\left(\sum_{i=1}^kc_i\right)\int_{\R^n}|\langle x,e_1\rangle|\,\gamma_n(\dint x)=
\int_{\R^n}\sum_{i=1}^kc_i|\langle x,u_i\rangle|\,\gamma_n(\dint x).
\end{align}

The convex hull $M\subset K$ of $\pm u_1,\ldots,\pm u_k$  is an $o$-symmetric polytope, and
\eqref{John-x-1} yields that
\begin{equation}
\label{xnormKM}
\|x\|_K\leq \|x\|_M=\inf\left\{\sum_{i=1}^k|\alpha_i|:\,x=\sum_{i=1}^k\alpha_iu_i\right\}\leq
\sum_{i=1}^kc_i|\langle x,u_i\rangle|.
\end{equation}
It follows from \eqref{ellB1uici} that
\begin{align*}
\ell(K)=&\int_{\R^n}\|x\|_K\,\gamma_n(\dint x)\leq
\int_{\R^n}\|x\|_M\,\gamma_n(\dint x)\\
\nonumber
\leq&
\int_{\R^n}\sum_{i=1}^kc_i|\langle x,u_i\rangle|\,\gamma_n(\dint x)=\ell(B^n_1).
\end{align*}

Next we assume that $\ell(K)=\ell(B^n_1)$. Then equality holds in \eqref{xnormKM}, hence $K=M$, and equality holds in the right inequality of \eqref{xnormKM}. Applying this fact to $x=u_j$ and the estimate $|\langle u_j,u_i\rangle |\le 1$ yield
$$
1\ge \sum_{i=1}^kc_i|\langle u_j,u_i\rangle |\ge  \sum_{i=1}^kc_i \langle u_j,u_i\rangle^2=1.
$$
This shows that $|\langle u_i,u_j\rangle|\in \{0,1\}$ for $i,j\in\{1,\ldots,k\}$. Therefore, we have $k=n$, $u_1,\ldots,u_n$ is an orthonormal basis and $c_1=\cdots=c_n=1$. \hfill\proofbox
\end{proof}

\medskip 

In order to have a stability version of Theorem~\ref{ell-sym-Loewner}, we need some observations on a system of vectors satisfying
\eqref{John-matrix}. We observe that setting $v_i=\sqrt{c_i}\,u_i$ in \eqref{John-matrix}, we have
$\sum_{i=1}^k v_i \otimes v_i=\Id_n$. We deduce the following from the Cauchy--Binet formula.

\begin{lemma}
\label{ciuibig}
If $v_1,\ldots,v_k\in \R^n$
satisfy $\sum_{i=1}^k v_i \otimes v_i={\rm Id}_n$, then
there exist $1\leq i_1<\ldots<i_n\leq k$ such that
$$
\det[v_{i_1},\ldots,v_{i_n}]^2\geq \binom{k}{n}^{-1}.
$$
\end{lemma}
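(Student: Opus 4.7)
The plan is to arrange the vectors $v_1,\dots,v_k$ as the columns of an $n\times k$ matrix $V=[v_1,\dots,v_k]$. The hypothesis $\sum_{i=1}^k v_i\otimes v_i=\mathrm{Id}_n$ then reads precisely as $VV^T=\mathrm{Id}_n$, and therefore $\det(VV^T)=1$.

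Next, I would invoke the Cauchy--Binet formula, which expresses $\det(VV^T)$ as a sum over all $n$-element index subsets of $\{1,\dots,k\}$ of the squared determinants of the corresponding $n\times n$ column submatrices of $V$. In symbols,
$$
1=\det(VV^T)=\sum_{1\leq i_1<\cdots<i_n\leq k}\det[v_{i_1},\dots,v_{i_n}]^2.
$$

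Finally, since the sum on the right-hand side contains exactly $\binom{k}{n}$ nonnegative terms adding to $1$, an averaging (pigeonhole) argument yields indices $1\leq i_1<\cdots<i_n\leq k$ for which
$$
\det[v_{i_1},\dots,v_{i_n}]^2\geq \binom{k}{n}^{-1},
$$
as required. There is no real obstacle here; the only thing to keep straight is that $k\geq n$ is implicit (otherwise $VV^T$ would be singular, contradicting $VV^T=\mathrm{Id}_n$), so the binomial coefficient is well-defined and the sum in Cauchy--Binet is nonempty.
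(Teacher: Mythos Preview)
Your proof is correct and takes essentially the same approach as the paper, which simply states that the lemma is deduced from the Cauchy--Binet formula without further detail. Your write-up makes explicit exactly the steps the paper leaves implicit: forming $V=[v_1,\dots,v_k]$, noting $VV^T=\mathrm{Id}_n$, expanding $\det(VV^T)=1$ via Cauchy--Binet, and applying pigeonhole.
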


For non-zero vectors $v$ and $w$, we write
$\angle(v,w)$ to denote their angle, that is, the geodesic distance of the unit vectors $\|v\|^{-1}v$ and $\|w\|^{-1}w$ on the unit sphere. The following lemma is a variant of \cite[Lemma 3.2]{BoH17}, which follows by a similar argument as in \cite{BoH17}.

\begin{lemma}
\label{almostort0}
Let  $u_1,\ldots,u_k\in S^{n-1}$ 
%with $u_i\neq\pm u_j$, for $i,j\in \{1,\ldots,n\}$ and $i\neq j$,
and $c_1,\ldots,c_k>0$, $k>n$,
satisfy \eqref{John-matrix}. Let 
$0<\eta\leq 1/(3n)$. If for any $j\in\{n+1,\ldots,k\}$ there exists some 
$i\in\{1,\ldots,n\}$ satisfying
$|\langle u_i,u_j\rangle|\geq \cos\eta$, then
 there exist an orthonormal basis $w_1,\ldots,w_n$ and $\xi_j\in\{-1,1\}$ for $j=1,\ldots,k$ such that
$$
\delta_{\rm H}\left(\{w_1,\ldots,w_n\},\{\xi_1u_1,\ldots,\xi_k u_k\}\right)<4\sqrt{n}\,\eta.
$$
\end{lemma}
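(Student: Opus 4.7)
My plan is to cluster each auxiliary vector $u_j$ ($j>n$) against the pivot $u_{\phi(j)}$ it nearly equals up to sign, re-express the isotropy identity \eqref{John-matrix} as an approximate identity supported only on the pivots $u_1,\ldots,u_n$, and then round these pivots to an orthonormal basis via polar decomposition. The intuition is that isotropy of a measure supported on only $n$ unit vectors would force them to form an orthonormal frame with equal weights, so the clustered configuration here must already be close to this.

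Concretely, for each $j\in\{n+1,\ldots,k\}$ I would choose $\phi(j)\in\{1,\ldots,n\}$ with $|\langle u_{\phi(j)},u_j\rangle|\geq\cos\eta$ and a sign $\xi_j\in\{\pm 1\}$ with $\xi_j\langle u_{\phi(j)},u_j\rangle\geq\cos\eta$; for $j\leq n$ set $\phi(j)=j$, $\xi_j=1$. Writing $\hat u_j=\xi_j u_j$ and $r_j=\hat u_j-u_{\phi(j)}$, the inequality $1-\cos\eta\leq\eta^2/2$ yields $\|r_j\|\leq\eta$, and the explicit formula in the plane of $\hat u_j$ and $u_{\phi(j)}$ gives $\|\hat u_j\otimes\hat u_j-u_{\phi(j)}\otimes u_{\phi(j)}\|_{\mathrm{op}}\leq\sin\angle(\hat u_j,u_{\phi(j)})\leq\eta$. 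Since $u_j\otimes u_j=\hat u_j\otimes\hat u_j$, collecting terms in \eqref{John-matrix} by pivot gives
$$
A:=\sum_{i=1}^n c_i'\,u_i\otimes u_i=\Id_n+E,\qquad c_i':=\sum_{\phi(\ell)=i}c_\ell,
$$
with $\|E\|_{\mathrm{op}}$ controlled by the cluster errors.

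Next, \eqref{John-x2-1} at $x=u_i$ together with the bound $\langle u_i,u_\ell\rangle^2\geq 1-\eta^2$ for $\phi(\ell)=i$ (which uses $\langle u_i,\hat u_\ell\rangle=1-\|r_\ell\|^2/2$) gives $c_i'(1-\eta^2)\leq 1-c_i\eta^2$, so $c_i'\leq 1+2\eta^2$; combined with $\sum_i c_i'=\sum_\ell c_\ell=n$ this forces $|c_i'-1|\leq 2n\eta^2$. With $U=(u_1,\ldots,u_n)$ and $V=U\,\mathrm{diag}(\sqrt{c_i'})$, the identity $VV^T=A$ transfers the spectrum of $A$ to $V^TV$, and dividing out the almost-unit diagonal $c_i'$ controls $\|U^TU-\Id_n\|_{\mathrm{op}}$. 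Hence $U$ is invertible and $W:=U(U^TU)^{-1/2}$ is orthogonal; its columns $w_1,\ldots,w_n$ form an orthonormal basis with $\|u_i-w_i\|\leq\|U\|_{\mathrm{op}}\cdot\|\Id_n-(U^TU)^{-1/2}\|_{\mathrm{op}}$ for each $i$. Since every $\xi_j u_j$ with $j>n$ lies within $\eta$ of $u_{\phi(j)}$, and every $w_i$ lies within the same scale of $\xi_i u_i$, both directions of the Hausdorff inequality follow.

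The main obstacle is sharpening $\|E\|_{\mathrm{op}}$. The crude bound $\|E\|_{\mathrm{op}}\leq\eta\sum_{\ell>n}c_\ell\leq n\eta$ would yield $\|U^TU-\Id_n\|_{\mathrm{op}}=O(n\eta)$ and a Hausdorff bound of order $n\eta$, losing a $\sqrt n$ factor against the claimed constant $4\sqrt n\,\eta$. To recover the $\sqrt n$ I would feed the quadratic cluster-mass estimate $|c_i'-1|\leq 2n\eta^2$ back into \eqref{John-x2-1}, showing that the off-pivot mass transported across clusters is of order $n\eta^2$ in total and hence $\|E\|_{\mathrm{op}}=O(\sqrt n\,\eta)$. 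This two-pass refinement is the quantitative heart of the lemma and the step most sensitive to how the bookkeeping is organized, following the pattern of \cite{BoH17}.
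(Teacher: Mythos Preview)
Your strategy --- cluster the auxiliary vectors to the pivots, reduce the isotropy identity to an approximate one supported on $u_1,\ldots,u_n$, and round to an orthonormal frame by polar decomposition --- is a genuine alternative to the paper's argument and is in principle viable. The paper proceeds quite differently: it never tries to control the global matrix deviation $E$ or $G-\Id_n$. Instead, for each $i$ it introduces the ``dual'' unit vector $\tilde w_i\in S^{n-1}$ orthogonal to $u_m$ for all $m\in\{1,\ldots,n\}\setminus\{i\}$ and applies \eqref{John-x2-1} directly to $\tilde w_i$. Since each $u_j$ with $j\in\mathcal V_m$ ($m\neq i$) makes angle $\le\eta$ with $u_m\perp\tilde w_i$, its contribution is $\le\sin^2\eta/\cos^2\eta$; summing the $n-1$ clusters produces the factor $n$ in $\sin^2\alpha_i\le n\sin^2\eta$, and the $\sqrt n$ falls out after a square root. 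Gram--Schmidt on $u_1,\ldots,u_n$ then gives $w_i$ with $\angle(w_i,u_i)\le\angle(\tilde w_i,u_i)<3\sqrt n\,\eta$. This per-vector mechanism delivers the $\sqrt n$ directly, without any matrix perturbation analysis.

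Your proposal, by contrast, has a gap precisely at the step you yourself flag as the ``quantitative heart''. From the cluster-mass estimate $|c_i'-1|\le 2n\eta^2$ fed back into \eqref{John-x2-1} one can indeed extract that the cross-cluster second moment $\sum_{m\neq i}c_m'\langle u_i,u_m\rangle^2$ is $O(n\eta^2)$, and hence each \emph{column} of $G-\Id_n$ has $\ell^2$-norm $O(\sqrt n\,\eta)$. But this does \emph{not} give $\|E\|_{\mathrm{op}}=O(\sqrt n\,\eta)$: a symmetric matrix whose rows all have $\ell^2$-norm $r$ can still have operator norm as large as $\sqrt n\,r$ (take the all-ones off-diagonal pattern), and nothing in your sketch rules this out. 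Since you then bound $\|u_i-w_i\|\le\|U\|_{\mathrm{op}}\cdot\|\Id_n-(U^TU)^{-1/2}\|_{\mathrm{op}}$, which uses the operator norm rather than the column norm, you are committed to controlling $\|G-\Id_n\|_{\mathrm{op}}$, and your argument as written only yields $O(n\eta)$. The fix is to replace that estimate by the sharper $\|u_i-w_i\|\le\|U\|_{\mathrm{op}}\cdot\|(\Id_n-G^{-1/2})e_i\|$ and to bound the latter column-wise via the power series for $(\Id_n+H)^{-1/2}$ together with $\|He_i\|=O(\sqrt n\,\eta)$; this recovers the correct scale. So your route can be completed, but not through the operator-norm claim you assert, and the paper's dual-vector trick reaches the same endpoint with considerably less bookkeeping.
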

\begin{proof} We partition the index set $\{1,\ldots,k\}$ into sets
${\cal V}_1,\ldots,{\cal V}_n$
such that  $i\in{\cal V}_i$ for $i=1,\ldots,n$, and  if $j\ge n+1$ and $j\in{\cal V}_i$ for some $i\in\{1,\ldots,n\}$, then $|\langle u_i,u_j\rangle|\geq \cos\eta$.
For $i=1,\ldots,n$, \eqref{John-x2-1} yields
$$
1=\|u_i\|^2\geq
\sum_{j\in{\cal V}_i} c_j\langle u_i,u_j\rangle^2\geq
\sum_{j\in{\cal V}_i} c_j(\cos\eta)^2,
$$
and hence
\begin{equation}
\label{Vicj}
\sum_{j\in{\cal V}_i} c_j\leq (\cos\eta)^{-2}.
\end{equation}
For $j\in{\cal V}_i$, we may replace $u_j$ with $-u_j$ if necessary to ensure
$\langle u_i,u_j\rangle\geq 0$, and hence
\begin{equation}
\label{uiujanglesmall}
\angle (u_i,u_j)\leq \eta\mbox{ \ \ \ for }j\in{\cal V}_i.
\end{equation}

For $i=1,\ldots,n$, let $\tilde{w}_i\in S^{n-1}$
 be orthogonal to $u_m$, $m\in\{1,\ldots,n\}\setminus\{i\}$,
and satisfy $\langle \tilde{w}_i,u_i\rangle\ge 0$.
In addition, let
 $\alpha_i\in[0,\pi/2]$ be defined by
$$
\cos\alpha_i=\max\{|\langle \tilde{w}_i,u_j\rangle|:\,j\in{\cal V}_i\}.
$$

We aim at bounding $\alpha_i$ from above. For a fixed $i\in\{1,\ldots,n\}$, we observe that
 if $j\in\mathcal{V}_i$, then $\langle \tilde{w}_i,u_j\rangle^2 \le   \cos^2\alpha_i $, and
if  $j\in\mathcal{V}_m$ for some $m\in\{1,\ldots,n\}\setminus\{i\}$, then
$\angle (u_m,u_j)\leq  \eta$ and $\langle \tilde{w}_i,u_m\rangle=0$ imply
$$
\langle u_j, \tilde{w}_i\rangle^2+\cos^2\eta\le \langle u_j,\tilde{w}_i \rangle^2+\langle u_j,u_m\rangle^2\le \|u_j\|^2=1,
$$
and hence
$\langle \tilde{w}_i,u_j\rangle^2 \le  \sin^2 \eta$.
Using these facts and (\ref{Vicj}), we deduce
\begin{align*}
\sum_{j\in{\cal V}_m}c_j \langle \tilde{w}_i,u_j\rangle^2
&\leq\sin^2\eta\sum_{j\in{\cal V}_m} c_j\leq
\frac{\sin^2\eta}{\cos^2\eta},\qquad \mbox{for $m\in\{1,\ldots,n\}\setminus\{i\}$},\\
\sum_{j\in{\cal V}_i} c_j\langle \tilde{w}_i,u_j\rangle^2&\leq
\cos^2\alpha_i\sum_{j\in{\cal V}_i} c_j\leq
\frac{\cos^2\alpha_i}{\cos^2\eta}.
\end{align*}
It follows from \eqref{John-x2-1} that
$$
1=\| \tilde{w}_i\|^2=\sum_{j=1}^kc_j \langle \tilde{w}_i,u_j\rangle^2\leq
 \frac{(n-1)\sin^2\eta}{\cos^2\eta}+\frac{\cos^2\alpha_i}{\cos^2\eta},
$$
and hence
$$
\sin^2\alpha_i=1-\cos^2\alpha_i\leq 1-\cos^2\eta+(n-1)\sin^2\eta=n\sin^2\eta.
$$
This shows that 
\begin{equation}\label{bound1}
\left(\frac{2}{\pi}\right)^2\alpha_i^2\le n\eta^2\le  \frac{1}{9n},
\end{equation}
thus $0\le\alpha_i\le\frac{\pi}{6\sqrt{n}}$. Since $\eta<1/(3n)$, we also get $\alpha_i+\eta< \pi/2$. By the definition of $\alpha_i$, there is some $j\in\mathcal{V}_i$ such that $\cos\alpha_i=\pm\langle \tilde{w}_i,u_j\rangle$. First, suppose that $\cos\alpha_i=- \langle \tilde{w}_i,u_j\rangle$. Then $\angle(\tilde w_i,-u_j)=\alpha_i$, hence \eqref{uiujanglesmall} yields
$$\angle(\tilde w_i,-u_i)\le \angle(\tilde w_i,-u_j)+\angle(-u_j,-u_i)\le \alpha_i+\eta<\frac{\pi}{2},
$$
which is a contradiction to $\langle\tilde w_i,u_i\rangle\ge 0$. This shows that $\cos\alpha_i=  \langle\tilde{w}_i,u_j\rangle$ and therefore $\angle(\tilde w_i,u_j)=\alpha_i$. As before, \eqref{uiujanglesmall} now yields
\begin{equation}
\label{alphai+}
\angle(\tilde{w}_i,u_i)\leq \alpha_i+\eta.
\end{equation}

Therefore,  \eqref{alphai+}, \eqref{bound1} %\eqref{bound2} 
and $ \eta<1/(3\sqrt{n})$ imply
\begin{equation}\label{remref3}
\angle(\tilde{w}_i,u_i)\leq \alpha_i+\eta\leq 2\sqrt{n}\,\eta+\eta<3\sqrt{n}\,\eta<1, \qquad i=1,\ldots,n.
\end{equation}
Supposing that $u_i$ is in the linear hull of the vectors $u_m$, $m\in \{1,\ldots,n\}\setminus\{i\}$, we deduce that $\langle u_i,\tilde w_i\rangle=0$, which contradicts \eqref{remref3}. 
 It follows that $u_1,\ldots,u_n$ are linearly independent.

We define $w_1=u_1$, and for $i=2,\ldots,n$, we let $w_i$ be
the unit vector in ${\rm lin}\,\{u_1,\ldots,u_i\}$
which is orthogonal to $u_1,\ldots,u_{i-1}$ and satisfies
$\langle w_i,u_i\rangle>0$.
As $w_1,\dots,w_n$ form an orthonormal basis, ${\rm lin}\,\{u_1,\ldots,u_{m}\}={\rm lin}\,\{w_1,\ldots,w_{m}\}$ and $\langle \tilde{w}_i,u_j\rangle=0$ for $j\leq i-1$, it follows that $u_i=\sum_{j=1}^i\beta_jw_j$ and
$\tilde{w}_i=\sum_{m=i}^n\gamma_mw_m$
for $\beta_j,\gamma_m\in\R$ where $\langle w_i,u_i\rangle>0$
and $\langle \tilde{w}_i,u_i\rangle\geq 0$
yield $\beta_i,\gamma_i\in[0,1]$. Thus
$\langle u_i,w_i\rangle=\beta_i\ge \beta_i\gamma_i=\langle u_i,\tilde w_i\rangle$ or in other words
$\angle(w_i,u_i)\leq\angle(\tilde{w}_i,u_i)< 3\sqrt{n}\,\eta$, and hence if  $j\in{\cal V}_i$, then
$$
\angle(w_i,u_j)\leq \angle(w_i,u_i)+\angle(u_i,u_j)< 4\sqrt{n}\,\eta,
$$
which completes the argument.
\hfill\proofbox
\end{proof}

\medskip 

We also need an estimate about the Gaussian measure of cones.

\begin{lemma}
\label{ConeGaussianlower}
If $\alpha\in(0,\frac{\pi}2)$ and $w\in S^{n-1}$, then the convex cone
$C=\{x\in\R^n:\langle x,w\rangle \geq \|x\|\cos\alpha\}$ satisfies
$$
\gamma_n(C)\geq \frac{\sin^{n-1}(\alpha)}{\sqrt{2\pi n}}.
$$
\end{lemma}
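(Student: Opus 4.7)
The plan is to convert the Gaussian integral into a surface-area problem on $S^{n-1}$ and then estimate that area by a direct projection argument. First, by the rotational invariance of $\gamma_n$ I may assume $w=e_n$. Writing any $x\in\R^n\setminus\{o\}$ in polar form $x=r\theta$ with $r>0$ and $\theta\in S^{n-1}$, and using that $C$ is a cone (so the indicator $\mathbf{1}_C(r\theta)$ depends only on $\theta$), the Gaussian density separates and yields
$$
\gamma_n(C)=\frac{\sigma_{n-1}(C\cap S^{n-1})}{\omega_{n-1}},
$$
where $\omega_{n-1}=\sigma_{n-1}(S^{n-1})$ and $C\cap S^{n-1}=\{\theta\in S^{n-1}:\theta_n\geq\cos\alpha\}$ is the spherical cap of angular radius $\alpha$ about $w$.

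Next I would estimate the cap area from below by orthogonal projection onto $w^\perp$. The cap is a graph $\theta=(\bar\theta,\sqrt{1-\|\bar\theta\|^2})$ over the $(n-1)$-dimensional Euclidean disk $D=\{\bar\theta\in w^\perp:\|\bar\theta\|\leq\sin\alpha\}$, and a routine computation of the Gram determinant of this parametrisation shows that the spherical surface element equals $\mathrm{d}\sigma=\mathrm{d}\bar\theta/\sqrt{1-\|\bar\theta\|^2}\geq\mathrm{d}\bar\theta$. Integrating over $D$ therefore gives
$$
\sigma_{n-1}(C\cap S^{n-1})\geq\mathrm{vol}_{n-1}(D)=\kappa_{n-1}\sin^{n-1}(\alpha).
$$

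It then remains to show $\kappa_{n-1}/\omega_{n-1}\geq 1/\sqrt{2\pi n}$. Expressing both constants through the Gamma function gives $\kappa_{n-1}/\omega_{n-1}=\Gamma(n/2)/(2\sqrt{\pi}\,\Gamma((n+1)/2))$, so the required bound is equivalent to
$$
\Gamma((n+1)/2)^2\leq (n/2)\,\Gamma(n/2)^2=\Gamma(n/2)\,\Gamma((n+2)/2),
$$
which is exactly the log-convexity of $\Gamma$ applied at $n/2$ and $(n+2)/2$ with midpoint $(n+1)/2$ (equivalently, Cauchy--Schwarz applied to the Euler integral representation of $\Gamma$). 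Chaining the three estimates produces the asserted bound $\gamma_n(C)\geq\sin^{n-1}(\alpha)/\sqrt{2\pi n}$.

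The least automatic step is the projection argument. The key observation there is that the cap is a graph over $D$ whose Jacobian $\cos\phi$ is at most $1$, so the orthogonal projection cannot increase $(n-1)$-area; once this is in place, the cone-factorisation of $\gamma_n$ and the one-line log-convexity inequality complete the proof, and I do not expect any genuine obstacle.
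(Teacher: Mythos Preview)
Your proof is correct and follows essentially the same route as the paper's: both reduce $\gamma_n(C)$ to the ratio of the cap area to the total sphere area, bound the cap area from below by projecting onto $w^\perp$ (the paper states this in one line, you write out the Jacobian), and then establish $\kappa_{n-1}/(n\kappa_n)\geq 1/\sqrt{2\pi n}$ via log-convexity of $\Gamma$. The paper phrases the last step as $(\kappa_{n-1})^2\geq\kappa_{n-2}\kappa_n$ together with $\kappa_{n-2}/\kappa_n=n/(2\pi)$, which unwinds to exactly your inequality $\Gamma((n+1)/2)^2\leq\Gamma(n/2)\,\Gamma((n+2)/2)$.
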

\begin{proof} We write $\mathcal{H}^{n-1}(\cdot)$ to denote the $(n-1)$-dimensional Hausdorff measure normalized in such a way that it coincides with the $(n-1)$-dimensional Lebesgues measure on subsets of $w^\bot$. We observe that
the orthogonal projection of $C\cap S^{n-1}$ onto $w^\bot$ is an $(n-1)$-ball of radius $\sin \alpha$; therefore,
$$
\gamma_n(C)=\frac{\mathcal{H}^{n-1}(C\cap S^{n-1})}{\mathcal{H}^{n-1}(S^{n-1})}\ge 
\frac{ \sin^{n-1}(\alpha)\kappa_{n-1}}{n\kappa_n}.
$$
The logarithmic convexity of the Gamma function yields
$(\kappa_{n-1})^2 \geq \kappa_{n-2}\kappa_{n}$. Since  $\kappa_{n-2}\kappa_{n}^{-1}=\frac{n}{2\pi}$ 
we get 
$$\frac{\kappa_{n-1}}{\kappa_n}\geq \sqrt{\frac{n}{2\pi}},$$ from which the assertion follows. \hfill 
\proofbox
\end{proof}

\medskip 

Next we provide a quantitative estimate about polytopes whose vertices are close the vertices of a regular crosspolytope.

\begin{lemma}
\label{closeToRegCross}
If $p_1,\ldots,p_{2n}$ are the vertices of $B^n_1$, and $M={\rm conv}\{q_1,\ldots,q_k\}\subset\R^n$ with $\delta_H(\{p_1,\ldots,p_{2n}\},\{q_1,\ldots,q_k\})\leq\alpha$ for some $\alpha\in(0,1/\sqrt{n})$, then
$$
(1-\sqrt{n}\alpha)B^n_1\subset M\subset (1+\sqrt{n}\alpha)B^n_1.
$$
\end{lemma}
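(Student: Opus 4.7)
\medskip

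\noindent\textbf{Plan.} I would argue via support functions, exploiting the fact that for every $u\in S^{n-1}$ one has $h_{B^n_1}(u)=\|u\|_\infty\geq \|u\|_2/\sqrt{n}=1/\sqrt{n}$. The inclusion $M\subset (1+\sqrt{n}\alpha)B^n_1$ means $h_M(u)\leq (1+\sqrt{n}\alpha)h_{B^n_1}(u)$ for all $u\in S^{n-1}$, while $(1-\sqrt{n}\alpha)B^n_1\subset M$ means $h_M(u)\geq (1-\sqrt{n}\alpha)h_{B^n_1}(u)$ for all $u\in S^{n-1}$. Both reduce to the one-sided estimate $|h_M(u)-h_{B^n_1}(u)|\leq \alpha$, which will follow from the Hausdorff hypothesis.

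For the upper bound, fix $u\in S^{n-1}$ and let $q_{j_0}$ attain $h_M(u)=\langle q_{j_0},u\rangle$. By the Hausdorff hypothesis there is some vertex $p_i$ with $\|q_{j_0}-p_i\|\leq \alpha$, hence
\[
h_M(u)=\langle q_{j_0},u\rangle \leq \langle p_i,u\rangle+\alpha \leq h_{B^n_1}(u)+\alpha \leq \left(1+\sqrt{n}\,\alpha\right)h_{B^n_1}(u),
\]
using $h_{B^n_1}(u)\geq 1/\sqrt{n}$ in the last step. For the lower bound, fix $u\in S^{n-1}$ and let $p_{i_0}$ attain $h_{B^n_1}(u)=\langle p_{i_0},u\rangle$. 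By the Hausdorff hypothesis there is some $q_{j}$ with $\|p_{i_0}-q_j\|\leq \alpha$, so $q_j\in M$ gives
\[
h_M(u)\geq \langle q_j,u\rangle\geq \langle p_{i_0},u\rangle-\alpha = h_{B^n_1}(u)-\alpha\geq \left(1-\sqrt{n}\,\alpha\right)h_{B^n_1}(u),
\]
again since $h_{B^n_1}(u)\geq 1/\sqrt{n}$. The restriction $\alpha<1/\sqrt{n}$ ensures the factor $(1-\sqrt{n}\alpha)$ is positive, so the inclusion is meaningful.

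\medskip

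\noindent\textbf{Remark on the main obstacle.} There is no genuine obstacle here; the proof is essentially a one-line support function calculation once one notices the bound $h_{B^n_1}(u)\geq 1/\sqrt{n}$ on $S^{n-1}$. The only subtlety is that the $q_j$ giving rise to the containment $M\subset (1+\sqrt{n}\alpha)B^n_1$ need not be in bijection with the vertices $p_i$ (there may be more or fewer of them, and clustering is possible), which is why the support function formulation is more convenient than trying to express each $(1-\sqrt{n}\alpha)p_i$ explicitly as a convex combination of the $q_j$.
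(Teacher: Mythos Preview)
Your proof is correct and is essentially the same as the paper's, just phrased via support functions rather than Minkowski sums. The paper uses the equivalent inclusion $B^n_2\subset\sqrt{n}\,B^n_1$ (which is dual to your bound $h_{B^n_1}(u)\geq 1/\sqrt{n}$ on $S^{n-1}$) together with the Hausdorff hypothesis in the form $B^n_1\subset M+\alpha B^n_2$ and $M\subset B^n_1+\alpha B^n_2$, then cancels $\sqrt{n}\,\alpha\,B^n_1$ from both sides of $(1-\sqrt{n}\alpha)B^n_1+\sqrt{n}\alpha B^n_1\subset M+\sqrt{n}\alpha B^n_1$; this is exactly your support-function inequality in additive form.
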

\begin{proof} We have $(B^n_1)^\circ=B^n_\infty\subset\sqrt{n} B^n_2$, hence $B^n_2\subset \sqrt{n} B^n_1$. By assumption, $B^n_1\subset M+\alpha B^n_2$. It follows that
$$
(1-\sqrt{n}\alpha)B^n_1+\sqrt{n}\alpha B^n_1\subset M+\sqrt{n}\alpha B^n_1,
$$
which yields the left inclusion. 

The right inclusion follows from $M\subset B^n_1+\alpha B^n_2\subset B^n_1+\alpha\sqrt{n}B^n_1$.
\hfill\proofbox
\end{proof}

\medskip 

The last auxiliary statement before Theorem~\ref{ell-sym-Loewner-stab} helps to estimate the variation of the $\ell$ function.

\begin{lemma}
\label{EllDifference}
If $R>0$ and $K\subset C\subset R B^n_2$ are $o$-symmetric  convex bodies, then
$$
\ell(K)-\ell(C)\geq  \left(\frac{n}{2\pi e}\right)^{\frac{n}2}R^{-(n+1)}V(C\setminus K).
$$
\end{lemma}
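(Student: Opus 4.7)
The plan is to use the integral representation \eqref{ellGaussianint} for $\ell$, which converts the difference $\ell(K)-\ell(C)$ into a single integral over the Gaussian measures of the dilated layers $tC\setminus tK$. Starting from
$$\ell(K)-\ell(C)=\int_0^\infty\bigl[\gamma_n(tC)-\gamma_n(tK)\bigr]\,\dint t=\int_0^\infty\gamma_n(tC\setminus tK)\,\dint t,$$
where the second equality uses $K\subset C$, I would expand $\gamma_n$ as a Gaussian integral and substitute $x=ty$ in the inner variable to obtain, after Tonelli,
$$\ell(K)-\ell(C)=\frac{1}{(2\pi)^{n/2}}\int_{C\setminus K}\left(\int_0^\infty t^n e^{-t^2\|y\|^2/2}\,\dint t\right)\dint y.$$

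The inner one-dimensional integral is classical: the substitution $s=t\|y\|/\sqrt{2}$ turns it into a Gamma integral, yielding $2^{(n-1)/2}\Gamma((n+1)/2)/\|y\|^{n+1}$. Since every $y\in C\setminus K\subset RB^n_2$ satisfies $\|y\|\le R$, the pointwise bound $\|y\|^{-(n+1)}\ge R^{-(n+1)}$ reduces the claim to verifying the purely numerical inequality
$$\frac{2^{(n-1)/2}\,\Gamma((n+1)/2)}{(2\pi)^{n/2}}\ge \left(\frac{n}{2\pi e}\right)^{n/2}.$$

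To close this out, I would apply the Stirling lower bound $\Gamma(x)\ge\sqrt{2\pi}\,x^{x-1/2}e^{-x}$ at $x=(n+1)/2$; after cancellation the residual factor is $\sqrt{\pi/e}\cdot((n+1)/n)^{n/2}$, which is at least $1$ since $\sqrt{\pi/e}>1$ and $(n+1)/n>1$. I expect the only subtle step to be calibrating the Stirling estimate tightly enough to arrive at the prescribed constant $(n/(2\pi e))^{n/2}$ without losing a dimensional factor; the Fubini and change-of-variables portion is entirely routine.
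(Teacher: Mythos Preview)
Your proposal is correct and follows essentially the same route as the paper's proof: both start from \eqref{ellGaussianint}, substitute $x=ty$ to factor the layer $C\setminus K$, evaluate the resulting Gamma integral, bound $\|y\|\le R$, and then verify the same numerical inequality via the Stirling lower bound. The only cosmetic difference is the order in which you bound $\|y\|$ and compute the $t$-integral, which is immaterial.
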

\begin{proof} It follows from \eqref{ellGaussianint}, and using substitutions like $x=ty$ and $t=s/R$ that
\begin{align*}
\ell(K)-\ell(C) &=\int_0^\infty(\gamma_n(tC)-\gamma_n(tK))\,dt=
\frac1{\sqrt{2\pi}^{n}}\int_0^\infty\int_{t(C\setminus K)}e^{-\frac{\|x\|^2}2}\,dx\,dt\\
&=\frac1{\sqrt{2\pi}^{n}}\int_0^\infty\int_{C\setminus K}t^ne^{-\frac{t^2\|y\|^2}2}\,dy\,dt\geq 
\frac{V(C\setminus K)}{\sqrt{2\pi}^{n}}\int_0^\infty t^ne^{-\frac{t^2R^2}2}\,dt\\
&=\frac{V(C\setminus K)}{R^{n+1}\sqrt{2\pi}^{n}}\int_0^\infty s^ne^{-\frac{s^2}2}\,ds
=\frac{V(C\setminus K)}{R^{n+1}\sqrt{2\pi}^{n}}2^{\frac{n-1}{2}}\Gamma\left(\frac{n+1}{2}\right).
\end{align*}
Since 
$$
\Gamma(x+1)\ge  \sqrt{2\pi x}\left(\frac{x}{e}\right)^{x},\quad x\ge 0,
$$
we get
$$
\frac{1}{\sqrt{2\pi}^{n}}2^{\frac{n-1}{2}}\Gamma\left(\frac{n+1}{2}\right)
\ge \sqrt{e\pi}\left(1-\frac{1}{n}\right)^{\frac{n}{2}}\left(\frac{n}{2\pi e}\right)^{\frac{n}{2}}\ge \left(\frac{n}{2\pi e}\right)^{\frac{n}{2}} ,\quad n\ge 2,
$$
which proves the assertion.
\hfill\proofbox
\end{proof}

\medskip 

Now we are ready to prove a stability version of Theorem~\ref{ell-sym-Loewner}. 

\begin{theo}
\label{ell-sym-Loewner-stab}
There exists some explicitly calculable absolute constant $c\geq 1$ with the following properties. If $K$ is an origin symmetric convex body in $\R^n$ such that $B^n_2\supset K$ is the L\"owner ellipsoid of $K$, and $\ell(K)\geq (1-\varepsilon)\ell(B^n_1)$ for some $\varepsilon\in(0,\varepsilon_0)$ for $\varepsilon_0=n^{-cn^2}$, then there exists an orthogonal transformation $\Phi\in O(n)$ such that $\delta_{\rm H}(K,\Phi B^n_1)\leq n^{cn^2}\cdot \varepsilon$.
\end{theo}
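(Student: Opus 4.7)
The plan is to quantify each step of Barthe's proof of Theorem~\ref{ell-sym-Loewner}. Fix the John decomposition $u_1,\ldots,u_k\in\partial K\cap S^{n-1}$ and weights $c_1,\ldots,c_k>0$ from \eqref{John-iso}, set $h(x)=\sum_{i=1}^k c_i|\langle x,u_i\rangle|$, and $M=\conv\{\pm u_1,\ldots,\pm u_k\}\subset K$. Barthe's chain
\[
\ell(K)\le\ell(M)\le\int_{\R^n} h\,\dint\gamma_n=\ell(B^n_1)
\]
combined with the hypothesis splits into two nonnegative defects
\[
\ell(M)-\ell(K)\le\varepsilon\,\ell(B^n_1),\qquad \int_{\R^n}(h-\|\cdot\|_M)\,\dint\gamma_n\le\varepsilon\,\ell(B^n_1),
\]
which I would convert, respectively, into Hausdorff closeness of $K$ to $M$ and of $M$ to a rotated $B^n_1$.

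For the first defect I apply Lemma~\ref{EllDifference} to the nested bodies $M\subset K\subset B^n_2$ with $R=1$ to get $V(K\setminus M)\le n^{O(n)}\varepsilon$. An isotropic calculation shows $M\supset n^{-1/2}B^n_2$: for any $v\in S^{n-1}$ the $c$-weighted average $n^{-1}\sum c_i\langle v,u_i\rangle^2=n^{-1}$ forces $\max_i|\langle v,u_i\rangle|\ge n^{-1/2}$, hence $h_M(v)\ge n^{-1/2}$. Combining the volume estimate with this roundness $r(M)\ge n^{-1/2}$ via the standard pinching argument (a point $p\in K$ at distance $t$ from $M$ produces a cone in $K\setminus M$ of volume at least a constant times $t\,r(M)^{n-1}$) yields $\delta_{\rm H}(K,M)\le n^{O(n)}\varepsilon$.

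For the second defect I evaluate at $v=u_j$: since $u_j\in M\cap\partial B^n_2$ gives $\|u_j\|_M=1$, one has
\[
h(u_j)-1=\sum_{i=1}^k c_i|\langle u_j,u_i\rangle|\bigl(1-|\langle u_j,u_i\rangle|\bigr)\ge 0.
\]
A cap argument (using that $h-\|\cdot\|_M$ is $O(n)$-Lipschitz on $S^{n-1}$ together with Lemma~\ref{ConeGaussianlower}) extracts a pointwise bound $h(u_j)-1\le\tau$ from the integral estimate. Each nonnegative term then forces $|\langle u_j,u_i\rangle|$ into an $O(\tau/c_i)$-neighborhood of $\{0,1\}$. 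Lemma~\ref{ciuibig} picks $n$ indices with $c_i\ge n^{-O(n)}$, and Lemma~\ref{almostort0} produces an orthonormal basis $w_1,\ldots,w_n$ close, up to signs, to $n$ of the $u_i$'s. Lemma~\ref{closeToRegCross} turns this into a bound on $\delta_{\rm H}(M,\Phi B^n_1)$ for the orthogonal map $\Phi$ with $\Phi e_i=w_i$; the triangle inequality closes.

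The main obstacle is producing a pointwise bound $\tau$ which is \emph{linear} in $\varepsilon$. The routine Lipschitz/cap calculation only yields $\tau\le n^{O(n)}\varepsilon^{1/n}$ and, after inner-product-to-angle conversion, a sublinear $\varepsilon^{1/(2n)}$ in the final Hausdorff bound; since $\varepsilon<n^{-cn^2}$, this is genuinely worse than the claimed $n^{cn^2}\varepsilon$ and cannot be absorbed by enlarging the $n$-dependent constant. The needed refinement exploits the piecewise-linear structure of $h$ near each $u_j$: only the indices $i$ with $|\langle u_j,u_i\rangle|\in(0,1)$ produce a genuinely linear contribution that the Gaussian cap integral sees cleanly, whereas the near-extremal indices contribute at quadratic order and are invisible to a crude Lipschitz bound. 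Once this sharper pointwise estimate is in place, the scheme above yields the stated theorem with constants bounded by~$n^{cn^2}$.
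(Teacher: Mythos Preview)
Your overall scheme---splitting $\ell(B^n_1)-\ell(K)$ into the two nonnegative defects and invoking Lemmas~\ref{ciuibig}, \ref{almostort0}, \ref{ConeGaussianlower}, \ref{closeToRegCross}, \ref{EllDifference}---matches the paper, and your treatment of the first defect ($K$ versus $M$) via Lemma~\ref{EllDifference} together with $M\supset n^{-1/2}B^n_2$ is correct, though the paper does it in the reverse order (first pinning $M$ to a crosspolytope, then comparing $K$ to that crosspolytope directly).

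The genuine gap is exactly where you flag it, and your proposed fix does not close it. Inferring a pointwise bound $h(u_j)-1\le\tau$ from the integral via Lipschitz continuity on a cap cannot beat $\tau\sim\varepsilon^{1/n}$, because the region you control has measure shrinking with $\tau$. Your remark that near-extremal indices contribute quadratically to $\sum_ic_i|\langle u_j,u_i\rangle|(1-|\langle u_j,u_i\rangle|)$ is a statement about the \emph{value} at the single point $u_j$, not about the \emph{spatial extent} of the region where $h-\|\cdot\|_M$ is large; it does nothing to enlarge the cap. The linear pieces of the piecewise-linear function $h-\|\cdot\|_M$ near $u_j$ can be arbitrarily thin, so ``piecewise-linear structure'' alone does not rescue the argument.

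The paper's mechanism is a forward construction rather than a backward inference, and it is the missing idea. After selecting via Lemma~\ref{ciuibig} a basis $u_1,\ldots,u_n$ with $|\det[u_1,\ldots,u_n]|\ge(en)^{-n/2}$ and each $c_i\ge(en)^{-n}$, one takes any $u_k$ with $|\langle u_i,u_k\rangle|\le\cos\eta$ for all $i\le n$, writes $u_k=\sum_{i\le n}\lambda_iu_i$ with $\lambda_i\ge 0$, and proves the geometric inequality $\sum_i\lambda_i\ge 1+c(n)\eta$: the point $(\sum\lambda_i)^{-1}u_k\in\conv\{u_1,\ldots,u_n\}$ is at angular distance $\ge\eta$ from every vertex, hence has norm $\le 1-c(n)\eta$. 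One then chooses a cone $C$, of aperture $\sim(en)^{-(n+1)/2}$ depending only on $n$, around the normal to $\mathrm{aff}\{u_1,\ldots,u_n\}$. For every $x\in C$ one has $c_i\langle x,u_i\rangle\ge\theta\lambda_i\|x\|$ with a fixed $\theta=\theta(n)>0$, so the isotropic representation $x=\sum_ic_i\langle x,u_i\rangle u_i$ can be legally perturbed: subtract $\theta\|x\|\lambda_i$ from the $i$th coefficient ($i\le n$) and add $\theta\|x\|$ to the coefficient of $u_k$. This alternative representation witnesses
\[
\|x\|_M\le h(x)-\theta\|x\|\Bigl(\sum_i\lambda_i-1\Bigr)\le h(x)-c'(n)\,\eta\,\|x\|\qquad\text{for all }x\in C.
\]
Since $\gamma_n(C)\ge n^{-c''n^2}$ by Lemma~\ref{ConeGaussianlower} (this is where the exponent $n^2$ enters), integrating gives $\varepsilon\,\ell(B^n_1)\ge n^{-O(n^2)}\eta$, hence $\eta\le n^{O(n^2)}\varepsilon$ linearly. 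From there Lemmas~\ref{almostort0} and~\ref{closeToRegCross} finish as you outline.
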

\begin{proof}
According to John's  characteristic condition,
there exist $u_1,\ldots,u_k\in S^{n-1}\cap \partial K$ and $c_1,\ldots,c_k>0$ with $n\leq k \leq \frac{n(n+1)}2$ that
satisfy \eqref{John-matrix}.

We note that up to \eqref{Ccup-C}, we do not use the condition $\ell(K)\geq(1-\varepsilon) \ell(B^n_1)$, only 
that \eqref{John-matrix} holds and $k\leq n^2$.
Applying Lemma~\ref{ciuibig} with $v_i=\sqrt{c_i}\,u_i$,
and using that $\binom{k}{n}\leq \binom{n^2}{n}\leq (\frac{e\cdot n^2}{n})^n=(en)^n$,
we may assume after re-indexing that
$$
c_1\cdots c_n\cdot\Big|\det[u_1,\ldots,u_n]\Big|^2\geq (en)^{-n}.
$$
We have $|\det[u_1,\ldots,u_n]|\leq 1$ as $u_1,\ldots,u_n$ are unit vectors, and $c_i\leq 1$ for $i=1,\ldots,n$ by
\eqref{John-ci-upper}; therefore,
\begin{equation}
\label{cidetlower}
c_i \geq (en)^{-n}\mbox{ \ for $i=1,\ldots,n$, and \ }
\Big|\det[u_1,\ldots,u_n]\Big|\geq (en)^{-n/2}.
\end{equation}

We define $M={\rm conv}\{\pm u_1,\ldots,\pm u_k\}$. The core claim of our argument for proving
Theorem~\ref{ell-sym-Loewner-stab} is that  there exist  $\Phi\in O(n)$ and $\gamma>0$ depending on $n$ such that
\begin{equation}
\label{Mclosetocrosspolytope}
(1- \gamma\varepsilon) \Phi B_1^n\subset M\subset K .
\end{equation}

If $k=n$, then $u_1,\ldots,u_n$ form an orthonormal basis, and hence 
$\Phi B_1^n=M$ for some $\Phi\in O(n)$, and thus  \eqref{Mclosetocrosspolytope} readily holds.
Therefore, we assume that $k>n$.

Let $\tilde{\eta}\in[0, \frac{\pi}2)$ be minimal with the  property that
for any $j=n+1,\ldots,k$ there exists
$i\in\{1,\ldots,n\}$ satisfying
$|\langle u_i,u_j\rangle|\geq \cos\tilde{\eta}$ (here $\tilde{\eta}<\frac{\pi}2$ follows from \eqref{cidetlower}), and let
\begin{equation}
\label{etatileeta}
 \eta=\min\left\{\tilde{\eta},\frac1{3n\cdot (en)^{n/2}}\right\} .
\end{equation}
In particular, we may assume that
\begin{equation}
\label{ukfar}
|\langle u_i,u_k\rangle|\leq \cos\tilde{\eta}\mbox{ \ \ for $i=1,\ldots,n$.}
\end{equation}
Possibly replacing $u_i$ by $-u_i$ if $i=1,\ldots,n$%or $i=k$
, we may assume that
\begin{equation}
\label{u1nklambda}
u_k=\sum_{i=1}^n\lambda_i u_i\mbox{ \ \ for $\lambda_1,\ldots,\lambda_n\geq 0$
where at least two $\lambda_i>0$.}
\end{equation}
 We observe that
$$
v=\frac{1}{\lambda_1+\cdots+\lambda_n}\cdot  u_k=\sum_{i=1}^n\frac{\lambda_i}{\lambda_1+\cdots+\lambda_n}\cdot  u_i\in
({\rm int}\,B^n_2)\cap {\rm conv}\{u_1,\ldots,u_n\},
$$
and hence $\|v\|<1$. We claim that
\begin{equation}
\label{vnormupper}
\lambda_1+\cdots+\lambda_n=\frac1{\|v\|}\geq 1+\frac1{3\cdot (en)^{n/2}}\cdot  \eta.
\end{equation}
For $x\in \R^n$ and a linear subspace $L\subset \R^n$, we write ${\rm dist}(x,L)$ for the Euclidean distance of $x$ from $L$.
In addition, for $i=1,\ldots,n$, we write $u_1,\ldots,\check{u}_i,\ldots,u_n$ to denote the list of $n-1$ vectors where $u_i$ is removed from the list $u_1,\ldots,u_n$. In particular, for $i=1,\ldots,n$, \eqref{cidetlower} yields that the linear $(n-1)$-space $L_i={\rm lin}\{u_1,\ldots,\check{u}_i,\ldots,u_n\}$
satisfies
\begin{equation}
\label{distuiface}
{\rm dist}(u_i,L_i)=\frac{\Big|\det[u_1,\ldots,u_n]\Big|}{{\Big|\det_{n-1}[u_1,\ldots,\check{u}_i,\ldots,u_n]\Big|}}
\geq (en)^{-n/2}.
\end{equation}

We may assume that $\lambda_1\geq \lambda_2\geq \ldots\geq \lambda_n$, and hence $\lambda_2>0$ and
\begin{equation}
\label{distvfacerelative}
{\rm dist}(v,L_1)\geq \frac1n\cdot {\rm dist}(u_1,L_1)
\geq \frac1{n\cdot (en)^{n/2}}.
\end{equation}

Let $v'\in S^{n-1}$ be the other endpoint of the secant of $B^n_2$ such that $v\in{\rm conv}\{u_1,v'\}$,
and hence $v'\neq -u_1$ by $\lambda_2>0$, and
 there exists a $v''\in L_1\cap {\rm conv}\{v,v'\}$ with $v''\neq o$
 as $v'\neq -u_1$. 
It follows from \eqref{distvfacerelative} that
the distance of $v$ from the line of $o$ and $v''$ is
at least $\frac1{n\cdot (en)^{n/2}}$, and hence 
$\|v\|\leq 1$ yields that
$\sin\angle (v,v'')\geq \frac1{n\cdot (en)^{n/2}}$.
We deduce that
\begin{align}
\label{anglevvprime}
\angle (v,v')&\geq 
\angle (v,v'')\geq \sin\angle (v,v'')\geq \frac1{n\cdot (en)^{n/2}}\geq \eta\\
\label{angleu1ovprime}
\cos \angle (v',u_1,o)&=\frac{\|u_1-v'\|}2\geq \frac{{\rm dist}(u_1,L_1)}2\geq
\frac1{2\cdot (en)^{n/2}}.
\end{align}
On the other hand, \eqref{ukfar} yields that
\begin{equation}
\label{anglevu1}
\angle (v,u_1)=\angle (u_k,u_1)\geq \tilde{\eta}\geq \eta.
\end{equation}
It follows from \eqref{anglevvprime} and \eqref{anglevu1} that there exists $v_0\in {\rm conv}\{v,u_1\}$ with $\angle (u_1,v_0)=\eta$, and it satisfies
$\|v\|\leq \|v_0\|$ and $\angle (v_0,u_1,o)+\eta\leq \frac{\pi}2$; therefore,
applying the Law of Sine in the triangle ${\rm conv}\{v_0,u_1,0\}$ implies
\begin{align*}
\lambda_1+\cdots+\lambda_n=\frac1{\|v\|}&\geq \frac1{\|v_0\|}=\frac{\|u_1-o\|}{\|v_0-o\|}=
\frac{\sin\angle (u_1,v_0,o)}{\sin\angle (v_0,u_1,o)}\\
&=
\frac{\sin(\angle (v',u_1,o)+\eta)}{\sin\angle (v',u_1,o)}
=\cos \eta+\frac{\cos\angle (v',u_1,o)}{\sin\angle (v',u_1,o)}\cdot
\sin\eta,
\end{align*}
and hence \eqref{angleu1ovprime} yields that
$$
\lambda_1+\cdots+\lambda_n\geq \cos \eta+\frac1{2\cdot (en)^{n/2}}\cdot
\sin\eta\geq 1+\frac1{3 \cdot  (en)^{n/2}}\cdot \eta
$$
where we used that $\eta\leq \frac1{3n\cdot (en)^{n/2}}\le \frac{1}{6}c$ with $c=(en)^{-n/2}$,
and that by basic calculus we have $\frac{\partial}{\partial t}(\cos t+\frac{\bar c}{2}\cdot \sin t)\geq \frac{\bar c}3$
whenever $\bar c\in(0,1]$ and $t\in(0,\frac{\bar c}6]$. In turn, we conclude the claim \eqref{vnormupper}.

Next let the $(n-1)$-ball $B^n_2\cap {\rm aff}\{u_1,\ldots,u_n\}$
have center $w_0$ and radius $\varrho$, and
let $w=w_0/\|w_0\|\in S^{n-1}$. Since the $(n-1)$-dimensional volume of
${\rm conv}\{u_1,\ldots,u_n\}$ is at most the $(n-1)$-dimensional volume
$(\frac{n}{n-1})^{\frac{n-1}2}\frac{\sqrt{n}}{(n-1)!}$
of the regular $(n-1)$-dimensional simplex of circumradius $1$,
and the distance of $o$ from ${\rm aff}\{u_1,\ldots,u_n\}$ is $\|w_0\|$,
we deduce from \eqref{cidetlower} that
\begin{align*}
\frac1n\cdot \|w_0\|\cdot\sqrt{e}\cdot\frac{\sqrt{n}}{(n-1)!}&>
\frac1n\cdot \|w_0\|\cdot\left(\frac{n}{n-1}\right)^{\frac{n-1}2}\frac{\sqrt{n}}{(n-1)!}\\
&>
V({\rm conv}\{o,u_1,\ldots,u_n\})>\frac1{n!}\cdot (en)^{-n/2}.
\end{align*}
It follows that
$$
 \|w_0\|>(en)^{-\frac{n+1}{2}}.
$$
Writing $\beta= \angle(w,u_i)\in(0,\frac{\pi}2)$ for $i=1,\ldots,n$, we thus get
$$
\sin\left(\frac{\pi}2-\beta\right)=\cos\beta=\|w_0\|> (en)^{-\frac{n+1}{2}},
$$
which in turn yields that
\begin{equation}
\label{gammaup}
\beta<\frac{\pi}2-\frac1{(en)^{\frac{n+1}{2}}}.
\end{equation}
Let $C$ be the cone of vectors whose angle with $w$ is at most $\frac1{2(en)^{\frac{n+1}{2}}}$; namely,
$$
C=\left\{x\in\R^n:\langle x,w\rangle\geq  \|x\|\cdot \cos \frac1{2(en)^{\frac{n+1}{2}}}\right\}.
$$
We deduce from
\eqref{gammaup} that if $x\in C\setminus  \{o\}$ and $i=1,\ldots,n$, then
$$
\angle (x,u_i)\leq \frac{\pi}2-\frac1{2(en)^{\frac{n+1}{2}}},
$$
and hence \eqref{cidetlower} implies that
\begin{equation}
\label{xuilow}
c_i\langle x,u_i\rangle\geq \frac{\|x\|}{(en)^{n}}\cdot \sin\frac1{2(en)^{\frac{n+1}{2}}}>
 \frac{\|x\|}{(en)^{n}}\cdot \frac1{4(en)^{\frac{n+1}{2}}}>\frac{\|x\|}{4(en)^{\frac{3n+1}2}}.
\end{equation}

We also need an upper bound on $\lambda_i$ for $i=1,\ldots,n$. Let $u_1^*,\ldots,u_n^*$ be the
dual basis to $u_1,\ldots,u_n$; namely, $\langle u_i^*,u_i\rangle=1$ for $i=1,\ldots,n$,
and $\langle u_i^*,u_j\rangle=0$ if $i\neq j$. It follows that if $i=1,\ldots,n$, then
\eqref{distuiface} yields that
$$
\|u_i^*\|=\frac{{\Big|\det_{n-1}[u_1,\ldots,\check{u}_i,\ldots,u_n]\Big|}}{\Big|\det[u_1,\ldots,u_n]\Big|}
\leq (en)^{n/2};
$$
therefore,
\begin{equation}
\label{lambdaiup}
\lambda_i=\langle u_i^*,u_k\rangle\leq \|u_i^*\|\leq (en)^{n/2}.
\end{equation}

We consider
$$
\theta=\frac1{4(en)^{2n+1}}.
$$
It follows from \eqref{xuilow} and \eqref{lambdaiup} that if $x\in C$ and $i=1,\ldots,n$, then
\begin{equation}
\label{ciminuslambdai}
c_i\langle x,u_i\rangle-\theta\cdot\lambda_i\|x\|\geq 0.
\end{equation}
Now  \eqref{John-x-1} and \eqref{u1nklambda} imply that if $x\in C$, then
$$
x=\left(c_k\langle x,u_k\rangle+\theta\cdot\|x\|\right)u_k+
\sum_{i=1}^n\left(c_i\langle x,u_i\rangle-\theta\cdot\lambda_i\|x\|\right)u_i+
\sum_{n<i<k}c_i\langle x,u_i\rangle u_i,
$$
where the last term is void if $k=n+1$, and hence applying first \eqref{ciminuslambdai} and later
\eqref{vnormupper} we get
\begin{align*}
\left(\sum_{i=1}^kc_i|\langle x,u_i\rangle|\right)-\|x\|_M&=\left(\sum_{i=1}^kc_i|\langle x,u_i\rangle|\right)-
\inf\left\{\sum_{i=1}^k|\alpha_i|:\,x=\sum_{i=1}^k\alpha_iu_i\right\}\\
&\geq
-\theta\cdot\|x\|+\sum_{i=1}^n\theta\cdot\lambda_i\|x\|\geq
\theta\cdot\|x\|\cdot \frac1{3 (en)^{n/2}}\cdot  \eta\\
&
\geq
 \frac{\|x\|}{12(en)^{3n}}\cdot  \eta.
\end{align*}
We observe that if $x\in C\cup(-C)$ and $y\not\in C\cup(-C)$, then
$|\langle w,x\rangle|\geq |\langle w,y\rangle|$, and hence
\begin{align}
\label{Ccup-C}
\int_{C\cup(-C)}\|x\|\,\gamma_n(\dint x)&\geq
\int_{C\cup(-C)}|\langle w,x\rangle|\,\gamma_n(\dint x)\nonumber\\
&\geq 2
\gamma_n(C)
\int_{\R^n}|\langle w,x\rangle| \,\gamma_n(\dint x).
\end{align}

Now we start to use the condition  $\ell(K)\geq(1-\varepsilon) \ell(B^n_1)$.
It follows from \eqref{xnormKM} that $\|x\|_M\leq \sum_{i=1}^kc_i|\langle x,u_i\rangle|$ for all $x\in\R^n$; therefore,
\eqref{ellB1uici},  Lemma~\ref{ConeGaussianlower} and $\ell(M)\geq \ell(K)\geq(1-\varepsilon) \ell(B^n_1)$ yield
\begin{align*}
\varepsilon \ell(B^n_1)&\geq  \ell(B^n_1)-\ell(M)=
\int_{\R^n}\sum_{i=1}^kc_i|\langle x,u_i\rangle|\,\gamma_n(\dint x)-
\int_{\R^n}\|x\|_M\,\gamma_n(\dint x)\\
&\geq
\int_{C\cup(-C)}\frac{\|x\|}{12(en)^{3n}}\cdot  \eta \,\gamma_n(\dint x)\geq
\frac{\gamma_n(C)}{6(en)^{3n}}\int_{\R^n}|\langle w,x\rangle| \,\gamma_n(\dint x)\cdot  \eta\\
&=\frac{\gamma_n(C)}{6n(en)^{3n}}\cdot  \ell(B^n_1)\cdot  \eta
\geq \left(\sin\frac1{2(en)^{\frac{n+1}{2}}}\right)^{n-1}\cdot
\frac{1}{18 n^2(en)^{3n}}\cdot  \ell(B^n_1)\cdot  \eta.
\end{align*}

It follows that there exists some absolute constant $c(1)\geq 1$ such that
$$
\eta\leq n^{c(1)n^2}\varepsilon.
$$
We choose an absolute constant $c(2)\geq 4c(1)$ such that $n^{(c(1)-c(2))n^2}<\frac1{3n\cdot (en)^{n/2}}$ for $n\geq 2$, thus
\[ 
%\label{Theorem13eps01}
n^{c(1)n^2}\varepsilon<\frac1{3n\cdot (en)^{n/2}} \mbox{ in \eqref{etatileeta} if }0<\varepsilon<n^{-c(2)n^2}.
\]
Therefore, if $\varepsilon_0=n^{-c(2)n^2}$, then
$$
\eta=\tilde{\eta}.
$$
In particular,  for any $j=n+1,\ldots,k$ there exists
$i\in\{1,\ldots,n\}$ satisfying
$|\langle u_i,u_j\rangle|\geq \cos\eta$, and hence
 Lemma~\ref{almostort0} yields the existence
of an orthonormal basis $w_1,\ldots,w_n$
such that
\begin{equation}\label{eq:sec2close}
\delta_{\rm H}\left(\{\pm w_1,\ldots,\pm w_n\},\{\pm u_1,\ldots,\pm u_k\}\right)<4\sqrt{n}\,\eta<n^{2c(1)n^2}\varepsilon.
\end{equation}
In turn, we conclude the claim \eqref{Mclosetocrosspolytope} by Lemma~\ref{closeToRegCross}; namely,
$$
(1- \gamma\varepsilon) B_1^n\subset M
$$
for $\gamma=n^{3c(1)n^2}$ as $n^{3c(1)n^2}\varepsilon<n^{(3c(1)-c(2))n^2}\le n^{-n^2}<n^{-1}$ by $c(2)\geq 4c(1)$ and $c(1)\geq 1$.
Since $M\subset K$, we deduce that $(1- \gamma\varepsilon) B_1^n\subset K$.

Now if $z\in K\subset B^n_2$ and $z\notin (1-\gamma\varepsilon)B^n_1$, then $\|z\|_{B_1^n}=
(1+t)(1-\gamma\varepsilon)$ for some $t>0$.  There exists a facet $F$ of $(1-\gamma\varepsilon)B^n_1$ such that $z_0=\frac1{1+t}\,z\in F$. Hence the polytope
$$
P_{z}={\rm conv}\left(\{\pm z\}\cup (1- \gamma\varepsilon) B_1^n\right)
$$
satisfies 
\begin{align*}
V(P_z)-V\left((1- \gamma\varepsilon) B_1^n\right)
&\geq 2t\cdot V\left({\rm conv}\left\{o,F\right\}\right)
=2t(1- \gamma\varepsilon)^n\frac{V(B^n_1)}{2^n}
\\
&>\nu_1(n)\ell(B^n_1)\cdot t
\end{align*}
with $\nu_1(n)=(1-n^{-1})^n(n!\ell(B^n_1))^{-1}\ge (4n\cdot n!)^{-1}$, since $\ell(B^n_1)\le n$. 
As $P_z\subset B^n_2$, Lemma~\ref{EllDifference} yields
that
$$
\ell\left((1- \gamma\varepsilon) B_1^n\right)-\ell(P_z)\geq \left(\frac{n}{2\pi e}\right)^{\frac{n}2}\nu_1(n)\ell(B^n_1)\cdot t\geq n^{-4n}\ell(B^n_1)\cdot t.
$$
Here we used that $n!\le \sqrt{2\pi n}\left(\frac{n}{e}\right)^n\cdot 1.2$ for $n\ge 2$ and hence
\begin{equation}\label{eq:2bound}
\left(\frac{2\pi e}{n}\right)^{\frac{n}{2}}4n\cdot n!\le n^{4n}.
\end{equation}
We conclude that
\begin{align*}
(1-\varepsilon)\ell(B^n_1)&\leq \ell(K)\leq \ell(P_z)\leq \ell\Big((1- \gamma\varepsilon) B_1^n\Big)-n^{-4n}t\ell(B_1^n)\\
&=
\Big((1- \gamma\varepsilon)^{-1}-n^{-4n}t\Big)\ell(B_1^n)
\leq
(1+2\gamma\varepsilon-n^{-4n}t)\ell(B_1^n),
\end{align*}
and therefore $t\leq (2\gamma+1)n^{4n} \cdot \varepsilon\leq n^{c(3)n^2} \cdot \varepsilon$ with 
 an absolute constant  $c(3)>0$. If we set $ \widehat{\gamma}=n^{c(3)n^2}$, then
$$
K\subset (1+ \widehat{\gamma}\varepsilon)B_1^n.
$$
Choosing $c=\max\{c(2),c(3)\}$, the assertion follows.
\hfill\proofbox
\end{proof}

\section{Extremal $\ell$-norm when the John ellipsoid is a ball and the rank one Brascamp--Lieb inequality}
\label{secBrascamp-Lieb}

One of our main tools in this section is Barthe's proof of the rank one geometric Brascamp--Lieb inequality by means of optimal transport of probability measures  (see \cite{Bar97,Bar98}). The geometric form of the corresponding general analytic inequality, verified originally
by Brascamp, Lieb \cite{BrL76},  was identified by   Ball \cite{Bal89} as an important case
that is perfectly suited for various geometric applications. 
For a more detailed discussion of the  Brascamp--Lieb inequality (including higher ranks), we refer to 
Carlen, Cordero-Erausquin \cite{CCE09}, Lieb \cite{Lie90},   Barthe \cite{Bar98}, 
  Valdimarsson \cite{Val08}, 
Bennett, Carbery, Christ, Tao \cite{BCCT08}, and Barthe, Cordero-Erausquin, Ledoux,
Maurey  \cite{BCLM11}.

To set up the form \eqref{BL} of the Brascamp--Lieb inequality, 
let the unit vectors $u_1,\ldots,u_k\in S^{n-1}$  and $c_1,\ldots,c_k>0$ satisfy $u_i\neq \pm u_j$ for $i\neq j$ and \eqref{John-matrix}, i.e., 
\begin{equation}
\label{ciui}
\sum_{i=1}^kc_i u_i\otimes u_i=\Id_n.
\end{equation}
During the argument in Barthe \cite{Bar98}, the following four consequences of \eqref{ciui} observed by  Ball \cite{Bal89}
(see also \cite{Bar98} for a simpler proof of \eqref{BallBarthe}) play crucial roles:
If $k\geq n$, $c_1,\ldots,c_k>0$ and  $u_1,\ldots,u_k\in S^{n-1}$ satisfy
\eqref{ciui}, then we have the following properties:
\begin{description}
\item[Ball--Barthe inequality:] For any $t_1,\ldots,t_k>0$, we have
\begin{equation}
\label{BallBarthe}
\det \left(\sum_{i=1}^kt_ic_i u_i\otimes u_i\right)\geq \prod_{i=1}^k t_i^{c_i}.
\end{equation}

\item[Quadratic inequality: ]
For $z=\sum_{i=1}^kc_i\theta_i u_i$ with
$\theta_1,\ldots,\theta_k\in\R$, we have
\begin{equation}
\label{BLRBLquad}
\|z\|^2=\sum_{i=1}^kc_i\langle z,u_i\rangle^2\le \sum_{i=1}^kc_i\theta_i^2.
\end{equation}

\item[Properties of $c_1,\ldots,c_k$:]
\begin{align}
\label{ciatmost10}
c_i&\leq 1\mbox{ \ for $i=1,\ldots,k$,}\\
\label{cisum0}
c_1+\cdots+c_k&=n.
\end{align}

\end{description}

We only need the Brascamp--Lieb inequality in the following special case:

\begin{theo}\label{thm:BLspecialcase}
Let $b>0$. Let $f$ be a non-negative log-concave function on $\R$ such that $f(t)>0$ if and only if $|t|\leq b$. Let $u_1,\ldots,u_k\in S^{n-1}$  and $c_1,\ldots,c_k>0$ satisfy  $u_i\neq \pm u_j$ for $i\neq j$ and \eqref{ciui}. Then
\begin{equation}
\label{BL}
\int_{\R^n}\prod_{i=1}^kf(\langle x,u_i\rangle)^{c_i}\, \dint x \leq
\prod_{i=1}^k\left(\int_{\R}f(t)\, \dint t\right)^{c_i}.
\end{equation}
\end{theo}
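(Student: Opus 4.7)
The plan is to follow Barthe's mass transportation proof of the rank one geometric Brascamp--Lieb inequality \cite{Bar97,Bar98}, specialized to the symmetric case $f_1=\cdots=f_k=f$. After rescaling, we may assume $\int_\R f(t)\,\dint t=1$, so that by \eqref{cisum0} it suffices to prove $\int_{\R^n}\prod_{i=1}^k f(\langle x,u_i\rangle)^{c_i}\,\dint x\le 1$. Let $g(t)=e^{-\pi t^2}$ be the Gaussian probability density and let $T:(-b,b)\to\R$ be the unique strictly increasing $C^1$ transport map sending $f$ to $g$, characterized by
$$
\int_{-\infty}^{T(s)} g(u)\,\dint u=\int_{-b}^{s} f(u)\,\dint u,
$$
so $T'(s)\,f(s)=g(T(s))$. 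Log-concavity of $f$ guarantees $f>0$ throughout $(-b,b)$, so $T$ is well defined and smooth.

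Set $U=\{y\in\R^n:|\langle y,u_i\rangle|<b\text{ for }i=1,\ldots,k\}$ and define $\Theta:U\to\R^n$ by
$$
\Theta(y)=\sum_{i=1}^k c_i\,T(\langle y,u_i\rangle)\,u_i,
$$
whose Jacobian is $D\Theta(y)=\sum_{i=1}^k c_i T'(\langle y,u_i\rangle)\,u_i\otimes u_i$. Since $T$ is strictly increasing, $\Theta=\nabla\Phi$ with $\Phi(y)=\sum_i c_i\Psi(\langle y,u_i\rangle)$ and $\Psi'=T$; the $u_i$ span $\R^n$ by \eqref{ciui}, so $\Phi$ is strictly convex and $\Theta$ is injective on $U$. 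The Ball--Barthe inequality \eqref{BallBarthe} applied with $t_i=T'(\langle y,u_i\rangle)$ gives
$$
\det D\Theta(y)\ge \prod_{i=1}^k T'(\langle y,u_i\rangle)^{c_i},
$$
while the quadratic inequality \eqref{BLRBLquad} with $\theta_i=T(\langle y,u_i\rangle)$ and $z=\Theta(y)$ yields $\|\Theta(y)\|^2\le \sum_{i=1}^k c_i\,T(\langle y,u_i\rangle)^2$.

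Combining these with the transport identity $g(T(s))=f(s)/T'(s)$ and the identity $\sum_i c_i\langle z,u_i\rangle^2=\|z\|^2$ coming from \eqref{ciui} produces the pointwise estimate
\begin{align*}
\prod_{i=1}^k f(\langle y,u_i\rangle)^{c_i}
&=\prod_{i=1}^k\bigl[g(T(\langle y,u_i\rangle))\,T'(\langle y,u_i\rangle)\bigr]^{c_i}\\
&=\exp\!\left(-\pi\sum_{i=1}^k c_i\,T(\langle y,u_i\rangle)^2\right)\prod_{i=1}^k T'(\langle y,u_i\rangle)^{c_i}\\
&\le e^{-\pi\|\Theta(y)\|^2}\,\det D\Theta(y).
\end{align*}
Since $f$ vanishes outside $[-b,b]$, the integrand of \eqref{BL} is supported in $U$; integrating and using injectivity of $\Theta$ to change variables via $x=\Theta(y)$ gives
$$
\int_{\R^n}\prod_{i=1}^k f(\langle y,u_i\rangle)^{c_i}\,\dint y\le \int_{\Theta(U)}e^{-\pi\|x\|^2}\,\dint x\le \int_{\R^n}e^{-\pi\|x\|^2}\,\dint x=1,
$$
which is the desired inequality.

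I expect the main technical obstacle to be justifying the change of variables near the endpoints $\pm b$: as $s\to\pm b$, $T(s)\to\pm\infty$ and $T'(s)$ may blow up, so one should argue via an exhaustion of $U$ by compactly contained open subsets and pass to the limit, using that $f$ is continuous and bounded on compact subsets of $(-b,b)$ and that $\Theta$ is a $C^1$ diffeomorphism onto its image there. The remaining computations are a routine combination of the Ball--Barthe and quadratic inequalities already recorded in Section~\ref{secBrascamp-Lieb}.
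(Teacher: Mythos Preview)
Your proposal is correct and follows essentially the same mass-transportation argument as the paper: normalize $f$, push it forward to a Gaussian via an increasing map $T$, define $\Theta(y)=\sum_i c_i T(\langle y,u_i\rangle)u_i$, and combine the Ball--Barthe inequality \eqref{BallBarthe} with the quadratic inequality \eqref{BLRBLquad} to dominate the integrand pointwise by the Jacobian of $\Theta$ times a Gaussian. The only differences are cosmetic (you use $g(t)=e^{-\pi t^2}$ where the paper uses $\gamma_1(t)=\frac{1}{\sqrt{2\pi}}e^{-t^2/2}$, and you justify injectivity via $\Theta=\nabla\Phi$ with $\Phi$ strictly convex), and there is a harmless typo: the transport identity should read $f(s)=g(T(s))\,T'(s)$, not $T'(s)f(s)=g(T(s))$---you use the correct form in the displayed computation that follows.
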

\noindent{\bf Remark}
It was proved by Barthe \cite{Bar98} that equality in \eqref{BL} holds if and only if  $k=n$ and $u_1,\ldots,u_n$ form an orthonormal basis of $\R^n$. We carry out the proof of Theorem \ref{thm:BLspecialcase} since we will refer to the argument in our proof of Proposition \ref{gammanbP-stab}.

\medskip 

\begin{proof}
For the proof of \eqref{BL},  by scaling we may assume that 
$$\int_{\R}f(t)\dint t=\int_{-b}^b f(t)\, \dint t=1.$$
We follow Barthe in using a transport of measure argument. We write $\gamma_1(t)=\sqrt{{2\pi}}^{-1}\,e^{-t^2/2}$, $t\in\R$, for the standard one-dimensional Gaussian density  (there is no danger of confusing it with the corresponding measure although we use the same symbol). 
Let $\varphi:(-b,b)\to\R$  be the transport map determined by the property
$$
\int_{-\infty}^x f(t)\, \dint t=\int_{-\infty}^{\varphi(x)} \gamma_1(t)\, \dint t
$$
for $x\in(-b,b)$.
Here $\varphi$ is $C^1$ as $f$ is continuous on $(-b,b)$, and 
\begin{equation}
\label{masstrans}
f(x)=\gamma_1(\varphi(x))\cdot \varphi'(x).
\end{equation}
For
$$
{\cal C}=\{x\in\R^n:\, \langle u_i,x\rangle\in (-b,b)\;\text{ for } i=1,\ldots,k\},
$$
we consider the transformation $\Theta:{\cal C}\to\R^n$ with
$$
\Theta(x)=\sum_{i=1}^kc_i\, \varphi(\langle u_i,x\rangle )\,u_i,\qquad x\in {\cal C},
$$
which satisfies
$$
d\Theta(x)=\sum_{i=1}^kc_i\, \varphi'(\langle u_i,x\rangle )\,u_i\otimes u_i.
$$
It is {known} that $d\Theta$ is positive definite and $\Theta:{\cal C}\to\R^n$ is
 injective (see \cite{Bar97,Bar98}).
Therefore, using first \eqref{masstrans},  then the Ball--Barthe inequality \eqref{BallBarthe} with $t_i=\varphi'(\langle u_i,x\rangle)$,
and then the definition of $\Theta$ and  \eqref{BLRBLquad}, the following argument leads to the Brascamp--Lieb inequality
in this special case:
\begin{align}
&\int_{\R^n}\prod_{i=1}^kf(\langle u_i,x\rangle)^{c_i}\,\dint x=
\int_{{\cal C}}\prod_{i=1}^kf(\langle u_i,x\rangle)^{c_i}\,\dint x\nonumber\\
\label{BLstep1}
&=
\int_{{\cal C}}\left(\prod_{i=1}^k\gamma_1(\varphi(\langle u_i,x\rangle))^{c_i}\right)
\left(\prod_{i=1}^k\varphi'(\langle u_i,x\rangle)^{c_i}\right)\,\dint x\\
\nonumber%\label{BLstep2}
&\leq \frac{1}{(2\pi)^{\frac{n}2}}\int_{{\cal C}}\left(\prod_{i=1}^ke^{-c_i\varphi(\langle u_i,x\rangle)^2/2}\right)
\det\left(\sum_{i=1}^kc_i\varphi'(\langle u_i,x\rangle )\,u_i\otimes u_i\right)\,\dint x\\
\nonumber
&\leq  \frac{1}{(2\pi)^{\frac{n}2}}\int_{{\cal C}}e^{-\|\Theta(x)\|^2/2}\det\left( d\Theta(x)\right)\,\dint x\\
\label{BLstep3}
&\leq \frac{1}{(2\pi)^{\frac{n}2}} \int_{\R^n}e^{-\|y\|^2/2}\,\dint y=1,
\end{align}
which proves the assertion.
\hfill\proofbox
\end{proof}

\medskip 

We now use the Brascamp--Lieb inequality to prove the following estimate:

\begin{prop}
\label{gammanbP-prop}
Let $b>0$. For $u_1,\ldots,u_k\in S^{n-1}$  and $c_1,\ldots,c_k>0$ satisfying \eqref{ciui}, the $o$-symmetric polytope
$P=\{x\in\R^n:\,|\langle x,u_i\rangle|\leq 1,\;i=1,\ldots,k\}$ satisfies
\begin{equation}
\label{gammabP}
\gamma_n(bP)\leq \gamma_n (bB^n_\infty).
\end{equation}
Equality holds if and only if $k=n$, $u_1,\ldots,u_n$ is an orthonormal basis and hence $P$ is a rotated copy of $B^n_\infty$.
\end{prop}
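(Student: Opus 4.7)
The plan is a direct application of the Brascamp--Lieb inequality (Theorem~\ref{thm:BLspecialcase}) with a cleverly chosen one-dimensional function $f$. The key observation is that because $\sum_{i=1}^k c_i u_i\otimes u_i = \Id_n$, one has the pointwise identity $\|x\|^2 = \sum_{i=1}^k c_i \langle x,u_i\rangle^2$, so the Gaussian density splits as $e^{-\|x\|^2/2} = \prod_{i=1}^k \bigl(e^{-\langle x,u_i\rangle^2/2}\bigr)^{c_i}$. Simultaneously, the indicator of $bP$ splits as $\mathbf{1}_{bP}(x) = \prod_{i=1}^k \mathbf{1}_{[-b,b]}(\langle x,u_i\rangle)$.

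First, I would define $f(t) = e^{-t^2/2}\mathbf{1}_{[-b,b]}(t)$, which is log-concave and satisfies $f(t)>0$ iff $|t|\le b$, so Theorem~\ref{thm:BLspecialcase} is applicable. The two observations above combine to
$$
\prod_{i=1}^k f(\langle x,u_i\rangle)^{c_i} = \mathbf{1}_{bP}(x)\, e^{-\|x\|^2/2},
$$
so integrating and applying \eqref{BL} yields
$$
(2\pi)^{n/2}\gamma_n(bP) = \int_{\R^n}\prod_{i=1}^k f(\langle x,u_i\rangle)^{c_i}\,\dint x \le \prod_{i=1}^k\left(\int_{-b}^b e^{-t^2/2}\,\dint t\right)^{c_i}.
$$
Using $\sum c_i = n$ from \eqref{cisum0}, the right-hand side simplifies to $\bigl(\int_{-b}^b e^{-t^2/2}\,\dint t\bigr)^n = (2\pi)^{n/2}\gamma_1([-b,b])^n$, and by Fubini $\gamma_1([-b,b])^n = \gamma_n(bB^n_\infty)$, which proves \eqref{gammabP}.

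For the equality characterization, I would invoke the remark following Theorem~\ref{thm:BLspecialcase}: equality in \eqref{BL} for any such $f$ forces $k=n$ and $u_1,\ldots,u_n$ to be an orthonormal basis, in which case $P$ is a rotated cube. The converse is trivial since then $\gamma_n(bP) = \gamma_1([-b,b])^n = \gamma_n(bB^n_\infty)$. No step is really an obstacle here; the only thing requiring care is just checking that $f$ meets the hypotheses of Theorem~\ref{thm:BLspecialcase} (log-concavity, support equal to $[-b,b]$), and that the equality case of Barthe for \eqref{BL} applies uniformly in the choice of $f$ (which it does, as the characterization depends only on the configuration $\{(u_i,c_i)\}$ and the fact that the Ball--Barthe inequality \eqref{BallBarthe} is saturated only in the orthonormal case).
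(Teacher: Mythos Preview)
Your proposal is correct and follows essentially the same approach as the paper: the paper defines $f_{(b)}(t)=\frac{1}{\sqrt{2\pi}}e^{-t^2/2}\mathbf{1}_{[-b,b]}(t)$ (your $f$ up to a constant factor), expresses $\gamma_n(bP)$ and $\gamma_n(bB^n_\infty)$ exactly as you do via the identities $\|x\|^2=\sum_i c_i\langle x,u_i\rangle^2$ and $\sum_i c_i=n$, applies the Brascamp--Lieb inequality \eqref{BL}, and handles the equality case by invoking the remark after Theorem~\ref{thm:BLspecialcase}.
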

\begin{proof}
Let
$$
f_{(b)}(t)=
\left\{
\begin{array}{cl}
\frac1{\sqrt{2\pi}}\,e^{-\frac{t^2}2},&\mbox{ if $|t|\leq b$},\\[1ex]
0,&\mbox{ if $|t|> b$.}
\end{array}
\right.
$$
It follows that
\begin{align}
\label{gammabPright}
\gamma_n ([-b,b]^n)&=\left(\int_{\R}f_{(b)}(t)\,\dint t\right)^n=
\prod_{i=1}^k\left(\int_{\R}f_{(b)}(t)\, \dint t\right)^{c_i},
\end{align}
and
\begin{align}
&\gamma_n(bP)=\frac{1}{\sqrt{2\pi}^n}\int_{bP}e^{-\frac{1}{2}\|x\|^2}\, \dint x\nonumber\\
&=\int_{\R^n}\left(\frac{1}{\sqrt{2\pi}}\right)^{\sum_{i=1}^kc_i}\prod_{i=1}^k\mathbf{1}\{x\in\R^n:|\langle x,u_i\rangle|\le b\}e^{-\frac{1}{2} \sum_{i=1}^kc_i\langle x,u_i\rangle^2}\,\dint x\nonumber\\
%\left(\frac1{\sqrt{2\pi}}\right)^{\sum_{i=1}^kc_i}
%\int_{\{x\in\R^n:\,|\langle x,u_i\rangle|\leq 1\atop i=1,\ldots,k\}}
%e^{-\frac12(\sum_{i=1}^kc_i\langle x,u_i\rangle^2)}\,dx\\
\label{gammabPleft}
&=\int_{\R^n}\prod_{i=1}^kf_{(b)}(\langle x,u_i\rangle)^{c_i}\,\dint x,
\end{align}
and hence the Brascamp--Lieb inequality \eqref{BL} yields \eqref{gammabP}.

If equality holds, then equality must holds in \eqref{BL} for the function $f_{(b)}$. By the remark after Theorem \ref{thm:BLspecialcase}, the assertion about the equality case follows. 
\hfill\proofbox
\end{proof}

\medskip

We recall that, according to \eqref{ellGaussianint}, we have
\begin{equation}
\label{ellGaussianint0}
\ell(K)=\int_0^\infty(1-\gamma_n(tK))\,\dint t.
\end{equation}

\begin{theo}[Schechtman, Schmuckenschl\"ager]
\label{ell-sym-John}
If  $K$ is an $o$-symmetric convex body in $\R^n$ such that $B^n_2\subset K$ is the John ellipsoid of $K$, then $\ell(K)\geq \ell(B^n_\infty)$. Equality holds if and only if $K$ is a cube circumscribed around $B^n_2$. 
\end{theo}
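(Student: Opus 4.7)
The plan is to reduce the statement to Proposition \ref{gammanbP-prop} (the Brascamp--Lieb polytope bound) by first passing from $K$ to the natural circumscribed polytope associated with John's contact points.

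First, I would invoke John's characteristic condition for $B^n_2$ being the John ellipsoid of the $o$-symmetric body $K$: there exist distinct unit vectors $u_1,\ldots,u_k \in \partial K \cap S^{n-1}$ with $u_i \neq \pm u_j$ for $i \neq j$, and weights $c_1,\ldots,c_k>0$ such that \eqref{ciui} holds. Since each $u_i$ is a contact point of $B^n_2 \subset K$ on $\partial K$, the hyperplane $\{x : \langle x, u_i\rangle = 1\}$ supports $K$ at $u_i$, hence $\langle x, u_i\rangle \leq 1$ for all $x \in K$. Combined with the origin symmetry of $K$, this yields
\[
K \subset P := \{x \in \R^n : |\langle x, u_i\rangle| \leq 1,\; i=1,\ldots,k\}.
\]

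Next, from $K \subset P$ we have $\|x\|_K \geq \|x\|_P$ for every $x \in \R^n$, so integrating against $\gamma_n$ gives $\ell(K) \geq \ell(P)$. Equivalently, using the representation \eqref{ellGaussianint0}, $tK \subset tP$ implies $\gamma_n(tK) \leq \gamma_n(tP)$ for every $t > 0$, and hence $\ell(K) \geq \ell(P)$. Now Proposition \ref{gammanbP-prop} applied with $b = t$ gives $\gamma_n(tP) \leq \gamma_n(tB^n_\infty)$ for every $t > 0$, so another application of \eqref{ellGaussianint0} yields
\[
\ell(P) = \int_0^\infty (1-\gamma_n(tP))\,\dint t \geq \int_0^\infty (1-\gamma_n(tB^n_\infty))\,\dint t = \ell(B^n_\infty).
\]
Chaining the two inequalities proves $\ell(K) \geq \ell(B^n_\infty)$.

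For the equality characterization, suppose $\ell(K) = \ell(B^n_\infty)$. Then both $\ell(K) = \ell(P)$ and $\ell(P) = \ell(B^n_\infty)$ must hold. The first identity, together with $K \subset P$ and the pointwise inequality $\|x\|_K \geq \|x\|_P$, forces $\|x\|_K = \|x\|_P$ almost everywhere, and by continuity of the gauges, $K = P$. The second identity forces equality in \eqref{gammabP} for every $t>0$; by the equality case stated in Proposition \ref{gammanbP-prop}, this requires $k = n$ and $u_1,\ldots,u_n$ to form an orthonormal basis of $\R^n$. Consequently $K = P$ is a cube circumscribed around $B^n_2$, and conversely any such cube realizes equality.

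There is no serious analytic obstacle here, as all the heavy lifting (Brascamp--Lieb via the transport argument) is already packaged in Proposition \ref{gammanbP-prop}; the only conceptual point one must get right is the direction of the inclusion $K \subset P$, which relies on interpreting John contact points as supporting normals of $K$.
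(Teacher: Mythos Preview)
Your proof is correct and follows essentially the same route as the paper: pass from $K$ to the circumscribed polytope $P$ determined by the John contact points, then apply Proposition~\ref{gammanbP-prop} inside the integral representation \eqref{ellGaussianint0}, and read off the equality case from that of Proposition~\ref{gammanbP-prop}. Your write-up even makes explicit two points the paper leaves implicit (why $K\subset P$ via supporting hyperplanes at contact points, and why $\ell(K)=\ell(P)$ forces $K=P$ via continuity of the gauges).
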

\begin{proof}
Let $u_1,\ldots,u_k\in S^{n-1}$ and $c_1,\ldots,c_k>0$ such that \eqref{ciui} holds. 
If 
$$
P=\{x\in\R^n:\,|\langle x,u_i\rangle|\leq 1,\;i=1,\ldots,k\},
$$
then $K\subset P$ and $\ell(K)\geq \ell(P)$.
On the other hand, an application of the estimate \eqref{gammabP} in the formula \eqref{ellGaussianint0}
 yields that
$\ell(P)\geq \ell (B^n_\infty)$. 

If equality holds, then clearly $K=P$ and $P$ is a rotated copy of $B^n_\infty$ by the equality case in Proposition \ref{gammanbP-prop}.
\hfill\proofbox
\end{proof}

\section{Auxiliary statements to strengthen Theorem~\ref{ell-sym-John}}
\label{secAuxiliaryJohn}

In order to obtain a stability version of Theorem~\ref{ell-sym-John}, let
$$
\Gamma_b=\frac1{\sqrt{2\pi}}\int_{-b}^be^{-\frac{s^2}2}\,\dint s<1,
$$
for $b>0$, and consider the transport map $\varphi_{b}:(-b,b)\to\R$ such that
\begin{equation}
\label{phibdef}
\frac1{\sqrt{2\pi}}\int_{-\infty}^{\varphi_{b}(t)} e^{-\frac{s^2}2}\,\dint s
=\frac1{\Gamma_b\sqrt{2\pi}}\int_{-b}^t e^{-\frac{s^2}2}\,\dint s.
\end{equation}

Readily, $\varphi_{b}$ is an odd function, and hence $\varphi_b(0)=0$, $\varphi'_{b}(s)$ is even
and $\varphi''_{b}(s)$ is odd. Differentiation of \eqref{phibdef} yields
\begin{align}
\label{phibder}
 \varphi'_{b}(t)\cdot e^{-\frac{\varphi_{b}(t)^2}2}&= \frac{1}{\Gamma_b}\cdot e^{-\frac{t^2}2},\\
\label{phibderder}
 \varphi''_{b}(t)\cdot e^{-\frac{\varphi_{b}(t)^2}2}&=
\varphi_{b}(t)\cdot \varphi'_{b}(t)^2\cdot e^{-\frac{\varphi_{b}(t)^2}2}-\frac{t}{\Gamma_b}\cdot e^{-\frac{t^2}2}.
\end{align}

We collect various properties of the map $\varphi_b$ for later use.

\begin{lemma}
\label{propphib}
Let $b\in[1,2]$ and $t_b:=\left(2\log (1/\Gamma_b)\right)^{{1}/{2}}$. Then
\begin{enumerate}
\item[{\rm (a)}] $0.3\le t_b\le b$,
\item[{\rm (b)}] $1\le \varphi'_b(t)$ for $t\in [0,t_b]$ and $ \varphi'_b(t)\le 1.6$ for $t\in [0,0.3]$, 
\item[{\rm (c)}] $t\leq \varphi_b(t)\leq 1.6t$ for $t\in [0,0.3]$,
\item[{\rm (d)}] $\varphi''_{b}(t)\geq  
(1-\Gamma_b)\Gamma_b^{-2}\cdot t\ge 0.049\cdot t$ for $t\in [0,t_b]$.
\end{enumerate}
\end{lemma}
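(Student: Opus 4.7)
I would organize the proof around three tools: a first-order relation from \eqref{phibder} that gives the lower bounds in (b) and (c); an ODE derived by combining \eqref{phibder} and \eqref{phibderder} that forces convexity of $\varphi_b$ on $[0,t_b]$; and the resulting monotonicity of $\varphi_b'$ for the upper bounds and for (d). Starting with (a), write $\Gamma_b=2\Phi(b)-1$ with $\Phi$ the standard normal CDF. The bound $t_b\le b$ reduces to $\Gamma_b\ge e^{-b^2/2}$; the function $\psi(b):=\Gamma_b-e^{-b^2/2}$ satisfies $\psi'(b)=e^{-b^2/2}(\sqrt{2/\pi}+b)>0$ and $\psi(1)>0$, hence $\psi>0$ on $[1,2]$. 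The bound $t_b\ge 0.3$ follows from the monotonicity of $b\mapsto \Gamma_b$ together with the numerical estimate $\Gamma_2\le e^{-0.045}$. For the lower bound in (b), rewrite via \eqref{phibder} the inequality $\varphi_b'(t)\ge 1$ as $\varphi_b(t)^2\ge t^2-t_b^2$, which is trivially true on $[0,t_b]$ since the right-hand side is non-positive; integrating from $\varphi_b(0)=0$ yields the lower bound $\varphi_b(t)\ge t$ of (c).

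The key next step is the identity
\[
\varphi_b''(t)=\varphi_b'(t)\bigl(\varphi_b(t)\varphi_b'(t)-t\bigr),
\]
obtained by substituting \eqref{phibder} into \eqref{phibderder}. Combined with the two lower bounds this forces $\varphi_b''\ge 0$ on $[0,t_b]$, so $\varphi_b$ is convex and $\varphi_b'$ is nondecreasing there, in particular $\varphi_b'(t)\ge\varphi_b'(0)=1/\Gamma_b$. For the upper bounds on $[0,0.3]$, rewrite \eqref{phibdef} as $\Phi(\varphi_b(t))=\tfrac12+(\Phi(t)-\tfrac12)/\Gamma_b$; this shows that $b\mapsto\varphi_b(t)$ is decreasing on $[1,2]$ for every fixed $t>0$, so $\varphi_b(0.3)\le\varphi_1(0.3)$, and a direct CDF evaluation gives $\varphi_1(0.3)<0.48$. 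Convexity of $\varphi_b$ with $\varphi_b(0)=0$ makes $\varphi_b(t)/t$ nondecreasing, hence $\varphi_b(t)\le 1.6\,t$ on $(0,0.3]$. Monotonicity of $\varphi_b'$ plus \eqref{phibder} at $t=0.3$ then gives $\varphi_b'(t)\le\Gamma_b^{-1}e^{(\varphi_b(0.3)^2-0.09)/2}\le 1.6$ via a short numerical check using $\varphi_b(0.3)\le 0.48$ and $\Gamma_b\ge\Gamma_1$.

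Finally, (d) follows from the ODE together with $\varphi_b(t)\ge t$ and $\varphi_b'(t)\ge 1/\Gamma_b$:
\[
\varphi_b''(t)\ge t\,\varphi_b'(t)\bigl(\varphi_b'(t)-1\bigr)\ge \frac{t}{\Gamma_b}\Bigl(\frac{1}{\Gamma_b}-1\Bigr)=t\,(1-\Gamma_b)\Gamma_b^{-2},
\]
and the numerical lower bound $0.049\,t$ comes from $\Gamma_b\le\Gamma_2$ together with the fact that $x\mapsto(1-x)/x^2$ is decreasing on $(0,1)$. The main obstacle will be the tight numerical bookkeeping for the upper bounds in (b) and (c), since the constant $1.6$ is only a little above the actual value of $\varphi_b'$ at $b=1$, $t=0.3$, and these steps are not robust to any sloppiness; the rest of the argument is structural and follows the clean convex-analysis template of a transport-map ODE.
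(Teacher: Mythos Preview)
Your argument is correct and follows essentially the same route as the paper's proof: the paper establishes (a), the lower bound in (b), and then the monotonicity of $t\mapsto\varphi_b(t)^2-t^2$ (equivalently, of $\varphi_b'$) together with the decrease of $b\mapsto\varphi_b(t)$ to obtain the upper bounds and (d) by the same substitution of \eqref{phibder} into \eqref{phibderder}. The only cosmetic differences are that you make the ODE $\varphi_b''=\varphi_b'(\varphi_b\varphi_b'-t)$ explicit and derive the upper bound in (c) from convexity before that in (b), whereas the paper does the reverse, integrating $\varphi_b'\le 1.6$ to get $\varphi_b\le 1.6t$.
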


\begin{proof} (a) Since $b\mapsto \Gamma_b$ is increasing, we get $t_b\ge t_2\ge 0.3$. The assertion $t_b\le b$ is equivalent to 
$$
h(b):=\sqrt{\frac{2}{\pi}}\int_0^be^{-\frac{s^2}{2}}\, \dint s-e^{-\frac{b^2}{2}}\ge 0.
$$
Since $h(1)>0$ and $h$ is increasing, the assertion follows.

(b) Note that for $t\ge 0$ the condition $t\le t_b$ is equivalent to $ \frac{1}{\Gamma_b} e^{-\frac{t^2}{2}}\ge 1$. Hence, if $t\in [0,t_b]$ then 
$$
\varphi_b'(t)=\frac{1}{\Gamma_b}e^{-\frac{t^2}{2}}e^{\frac{\varphi_b(t)^2}{2}}\ge \frac{1}{\Gamma_b}e^{-\frac{t^2}{2}}\ge 1.
$$

To prove the upper bound for $\varphi_b'(t)$, first note that $t\mapsto \varphi_b(t)^2-t^2$ is non-decreasing for $t\in [0,t_b]$, since 
$$
\frac{d}{dt}\left(\varphi_b(t)^2-t^2\right)=2\varphi_b(t)\varphi_b'(t)-2t\ge 2t\cdot 1-2t\ge 0,
$$
and $b\mapsto \varphi_b(t)$ is non-increasing for $0\le t\le 0.3\le t_b\le b$. To see this, let $t$ be fixed. We show that the right side of \eqref{phibdef}, considered as a function of $b$, is non-increasing. Then the same is true for the left side of \eqref{phibdef}, hence $b\mapsto \varphi_b(t)$ must be non-increasing as well. For this, observe that
\begin{align*}
&\frac{\dint }{\dint b}\left(\int_{-b}^te^{-\frac{s^2}{2}}\, \dint s \cdot \left(\int_0^b e^{-\frac{s^2}{2}}\, \dint s\right)^{-1}\right)\\
&
=e^{-\frac{b^2}{2}}\left(\int_0^b e^{-\frac{s^2}{2}}\, \dint s\right)^{-2}\left(\int_{-b}^0e^{-\frac{s^2}{2}}\, \dint s -\int_{-b}^te^{-\frac{s^2}{2}}\, \dint s\right)\le 0,
\end{align*}
since $t\ge 0$. 

Using these facts, we get
\begin{align*}
\varphi_b'(t)&=\frac{1}{\Gamma_b}e^{\frac{1}{2}\left(\varphi_b(t)^2-t^2\right)}\le \frac{1}{\Gamma_1}e^{\frac{1}{2}\left(\varphi_b(0.3)^2-0.3^2\right)}\le \frac{1}{\Gamma_1}e^{\frac{1}{2}\left(\varphi_1(0.3)^2-0.3^2\right)}\le 1.548.
\end{align*}

(c) directly follows from (b) and $\varphi_b(0)=0$.

(d) Substituting $\varphi_b'(t)$ from \eqref{phibder} into \eqref{phibderder} and using that 
 $\varphi_b(t)\ge t$, we get 
\begin{align*}
\varphi''_{b}(t)&= 
\varphi_{b}(t) e^{{\varphi_{b}(t)^2}}\frac{1}{\Gamma_b^2}e^{-t^2}-\frac{t}{\Gamma_b} e^{-\frac{t^2}2} e^{\frac{\varphi_{b}(t)^2}2}\\
&\ge t\cdot \frac{1}{\Gamma_b}e^{\frac{1}{2}\left(\varphi_b(t)^2-t^2\right)}
\left(\frac{1}{\Gamma_b}e^{\frac{1}{2}\left(\varphi_b(t)^2-t^2\right)}-1\right)\\
&\ge \frac{1-\Gamma_b}{\Gamma_b^2}\cdot t\ge \frac{1-\Gamma_2}{\Gamma_2^2}\cdot t\ge 0.049\cdot t,
\end{align*}
which completes the argument.
\hfill \proofbox
\end{proof}

\medskip

We also need the following stability version of the Ball--Barthe inequality \eqref{BallBarthe} proved in   
B\"or\"oczky, Fodor, Hug \cite{BFH21}:

\begin{lemma}
\label{Ball-Barthe-stab}
If $k\geq n+1$, $t_1,\ldots,t_k>0$, $c_1,\ldots,c_k>0$ and  $u_1,\ldots,u_k\in S^{n-1}$ satisfy
(\ref{ciui}), and there exist $\beta>0$ and $n+1$ indices $ \{i_1,\ldots,i_{n+1}\}\subset \{1,\ldots,k\}$ such that
\begin{align*}
c_{i_1}\cdots c_{i_n}\det[u_{i_1},\ldots,u_{i_n}]^2&\geq \beta,\\
c_{i_2}\cdots c_{i_{n+1}}\det[u_{i_2},\ldots,u_{i_{n+1}}]^2&\geq \beta,
\end{align*}
then
$$
\det\left( \sum_{i=1}^kt_ic_iu_i\otimes u_i\right)\geq \left( 1+\frac{\beta(t_{i_1}-t_{i_{n+1}})^2}{4(t_{i_1}+t_{i_{n+1}})^2}\right) \prod_{i=1}^k t_i^{c_i}.
$$
\end{lemma}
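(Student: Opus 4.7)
The plan is to recast Lemma~\ref{Ball-Barthe-stab} as a stability version of a weighted AM--GM inequality via the Cauchy--Binet formula, exploiting the fact that the two index sets $I_1:=\{i_1,\ldots,i_n\}$ and $I_2:=\{i_2,\ldots,i_{n+1}\}$ differ in exactly one element (namely, $i_1$ is replaced by $i_{n+1}$). Writing $M:=\sum_{i=1}^k t_ic_i u_i\otimes u_i$ as $VV^T$, where $V$ is the $n\times k$ matrix with columns $\sqrt{t_ic_i}\,u_i$, Cauchy--Binet gives
\[
\det M=\sum_{|I|=n}\Big(\prod_{j\in I}t_jc_j\Big)\det[u_I]^2=\sum_{|I|=n}\mu_I\,t_I,
\]
where $\mu_I:=\big(\prod_{j\in I}c_j\big)\det[u_I]^2$ and $t_I:=\prod_{j\in I}t_j$. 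Taking all $t_j=1$ and using \eqref{ciui} gives $\sum_I\mu_I=\det\Id_n=1$, so $\mu$ is a probability measure on $n$-subsets of $\{1,\ldots,k\}$. Differentiating $\det M=\sum_I\mu_I t_I$ in $t_j$ at $t=(1,\ldots,1)$ (or using the matrix determinant lemma for the rank-one perturbation $c_j u_j\otimes u_j$) identifies the marginal $\sum_{I\ni j}\mu_I=c_j$, and hence $\prod_I t_I^{\mu_I}=\prod_j t_j^{c_j}=:P$. In this language, the classical Ball--Barthe inequality \eqref{BallBarthe} is exactly the weighted AM--GM $\sum_I\mu_I t_I\ge\prod_I t_I^{\mu_I}$.

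The hypothesis gives $\mu_{I_1},\mu_{I_2}\ge\beta$, which forces $\beta\le 1/2$ since $\mu$ is a probability and $I_1\neq I_2$. Normalise by setting $r_I:=t_I/P$, so that $\sum_I\mu_I r_I=\det M/P$ and $\prod_I r_I^{\mu_I}=1$, and decompose
\[
\mu=\beta\,\delta_{I_1}+\beta\,\delta_{I_2}+\mu^{*},
\]
with $\mu^{*}\ge 0$ of total mass $1-2\beta$. Weighted AM--GM applied to the renormalised measure $\mu^{*}/(1-2\beta)$, combined with $\prod_I r_I^{\mu^*(I)}=\prod_I r_I^{\mu_I}\cdot r_{I_1}^{-\beta}r_{I_2}^{-\beta}=(r_{I_1}r_{I_2})^{-\beta}$, yields
\[
\frac{\det M}{P}\;\ge\;\beta a+\beta b+(1-2\beta)(ab)^{-\beta/(1-2\beta)},
\]
with $a:=r_{I_1}$ and $b:=r_{I_2}$. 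The crucial structural point is that $I_1$ and $I_2$ share all indices except one, so $a/b=t_{I_1}/t_{I_2}=t_{i_1}/t_{i_{n+1}}$, and the stability term will ultimately depend only on this ratio.

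The remaining step is a one-variable calculus reduction. Writing $a=pe^x$, $b=pe^{-x}$, so that $\tanh x=(a-b)/(a+b)=(t_{i_1}-t_{i_{n+1}})/(t_{i_1}+t_{i_{n+1}})$, I would minimise the right-hand side over $p>0$ at fixed $x$. The first-order condition gives the critical point $p^{*}=(\cosh x)^{-(1-2\beta)}$, at which both summands collapse to $(\cosh x)^{2\beta}$; therefore
\[
\frac{\det M}{P}\;\ge\;(\cosh x)^{2\beta}=(1-\tanh^2 x)^{-\beta}\;\ge\;1+\beta\tanh^2 x,
\]
where the final inequality is Bernoulli's inequality applied to $(1+y)^{-\beta}$ with $y=-\tanh^2 x\in(-1,0]$ and exponent $-\beta<0$. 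This is actually a factor of $4$ stronger than the claim, and the stated bound follows trivially. I expect the main obstacle to be conceptual rather than computational: one must notice that because $|I_1\triangle I_2|=2$ the stability improvement is controlled by the single ratio $t_{i_1}/t_{i_{n+1}}$, and choose the decomposition of $\mu$ that isolates this two-point defect from the rest; once that is done the calculus minimisation and the Bernoulli step are routine.
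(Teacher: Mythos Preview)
The paper does not actually prove this lemma here; it merely quotes it from the authors' earlier work \cite{BFH21}. Your argument is correct and self-contained, and in fact yields the stronger constant $\beta$ in place of $\beta/4$. The key steps---the Cauchy--Binet expansion $\det M=\sum_I\mu_I t_I$ with $\sum_I\mu_I=1$ and marginals $\sum_{I\ni j}\mu_I=c_j$, the two-point extraction $\mu=\beta\delta_{I_1}+\beta\delta_{I_2}+\mu^*$, AM--GM on $\mu^*$, and the one-variable minimisation in $p$---are all valid. The observation that $|I_1\triangle I_2|=2$ so that $t_{I_1}/t_{I_2}=t_{i_1}/t_{i_{n+1}}$ is exactly the structural point needed, and your Bernoulli step $(1-\tanh^2x)^{-\beta}\ge 1+\beta\tanh^2x$ is clean.

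One cosmetic remark: at the boundary value $\beta=1/2$ (which your own inequality $\beta\le 1/2$ allows) the term $(1-2\beta)(ab)^{-\beta/(1-2\beta)}$ is formally indeterminate. This case is handled directly: then $\mu^*=0$, the constraint $\prod_I r_I^{\mu_I}=1$ forces $ab=1$, and $\det M/P=\tfrac12(a+b)=\cosh x=(\cosh x)^{2\beta}$, so the same bound holds with equality.
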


 \section{Stability around the cube when the John ellipsoid is a ball }
 \label{secJohnBall}
 
To prove Proposition~\ref{gammanbP-stab}, we recall Lemma 3.2 in B\"or\"oczky, Hug \cite{BoH17}:

\begin{lemma}
\label{almostort-lemma}
Let  $v_1,\ldots,v_k\in \R^n\setminus \{0\}$
satisfy $\sum_{i=1}^k v_i \otimes v_i=\Id_n$, and let
$0<\eta<1/(3\sqrt{k})$. For any $i\in\{1,\ldots,k\}$, we assume  that $\|v_i\|\leq \eta$ or
there is some $j\in\{1,\ldots,n\}$ with $\angle(v_i,v_j)\leq \eta$.
Then there exists an orthonormal basis $w_1,\ldots,w_n$
such that $\angle(v_i,w_i)<3\sqrt{k}\,\eta$ for $i=1,\ldots,n$.
\end{lemma}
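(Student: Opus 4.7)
The plan is to mirror the proof of Lemma~\ref{almostort0}, with two adjustments that handle the present hypotheses: closeness is here measured by the angle $\angle(v_i,v_j)$ directly (no sign flip on $v_j$), and the ``small'' vectors with $\|v_i\|\le \eta$ must be absorbed into the error budget. First, put $I_0=\{i:\|v_i\|\le \eta\}$, $I_+=\{1,\ldots,k\}\setminus I_0$, and for each $j\in I_+$ pick an index $i(j)\in\{1,\ldots,n\}$ with $\angle(v_j,v_{i(j)})\le\eta$, choosing $i(j)=j$ when $j\le n$ lies in $I_+$. Form the partition ${\cal V}_m=\{j\in I_+:i(j)=m\}$ of $I_+$. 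By applying $1=\|v_m/\|v_m\|\|^2=\sum_j\langle v_m/\|v_m\|,v_j\rangle^2$ and keeping only the terms indexed by ${\cal V}_m$, one obtains the analogue of \eqref{Vicj},
\[
\sum_{j\in {\cal V}_m}\|v_j\|^2\le \cos^{-2}\eta;
\]
this bound does not need $m\in {\cal V}_m$, only $v_m\ne 0$, which is part of the hypothesis.

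Next I would show that $v_1,\ldots,v_n$ are linearly independent. Arguing by contradiction, I take a unit vector $w$ perpendicular to ${\rm lin}\{v_1,\ldots,v_n\}$, so $\langle w,v_i\rangle=0$ for $i\le n$. For $i>n$ in $I_+$ the $v_{i(j)}$-component of $v_i$ is annihilated by $w$, whence $|\langle w,v_i\rangle|\le \|v_i\|\sin\eta$; for $i\in I_0$ we use $|\langle w,v_i\rangle|\le\eta$. Combining with $\sum\|v_i\|^2=n$ and $|I_0|\le k$ in $1=\sum_i\langle w,v_i\rangle^2$ yields $1\le n\sin^2\eta+k\eta^2\le 2k\eta^2$, contradicting $\eta<1/(3\sqrt{k})$.

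With linear independence secured, define $\tilde w_i\in S^{n-1}$ to be the unit vector orthogonal to $\{v_m:m\le n,\ m\ne i\}$ with $\langle \tilde w_i,v_i\rangle\ge 0$, and set $\theta_i=\angle(\tilde w_i,v_i)\in[0,\pi/2]$ and $\cos\alpha_i=\max\{|\langle \tilde w_i,v_j/\|v_j\|\rangle|:j\in {\cal V}_i\}$. Split $1=\sum_j\langle \tilde w_i,v_j\rangle^2$ across the three blocks ${\cal V}_i$, $\bigcup_{m\ne i}{\cal V}_m$, and $I_0$: on ${\cal V}_i$ use the definition of $\alpha_i$; on ${\cal V}_m$ with $m\ne i$ use $\tilde w_i\perp v_m$ together with $\angle(v_j,v_m)\le\eta$ to force $\langle\tilde w_i,v_j\rangle^2\le\|v_j\|^2\sin^2\eta$; on $I_0$ use $\|v_j\|^2\le\eta^2$. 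Feeding in the partition bound produces
\[
\sin^2\alpha_i\le n\sin^2\eta+k\eta^2\le 2k\eta^2,
\]
so $\alpha_i\le (\pi/2)\sqrt{2k}\,\eta$. A triangle inequality on the sphere, applied to some $j^*\in {\cal V}_i$ realizing $\alpha_i$, then gives $\theta_i\le\alpha_i+\eta$; the choice $\theta_i\in[0,\pi/2]$ together with $\eta<1/(3\sqrt k)$ rules out the alternative $\angle(\tilde w_i,v_{j^*})=\pi-\alpha_i$.

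Finally, I feed $\theta_i$ through a Gram--Schmidt construction on $v_1,\ldots,v_n$: set $w_1=v_1/\|v_1\|$, and let $w_i$ be the unit vector in ${\rm lin}\{v_1,\ldots,v_i\}$ orthogonal to $v_1,\ldots,v_{i-1}$ with $\langle w_i,v_i\rangle>0$. Expanding $v_i=\sum_{l\le i}\beta_lw_l$ and $\tilde w_i=\sum_{l\ge i}\gamma_lw_l$ (the latter because $\tilde w_i\perp v_l$ for $l<i$), with $\beta_i>0$ and $\gamma_i\in(0,1]$, a one-line calculation gives $\cos\angle(w_i,v_i)=\beta_i/\|v_i\|\ge \gamma_i\beta_i/\|v_i\|=\cos\theta_i$, so $\angle(w_i,v_i)\le\theta_i$. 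Collecting yields $\angle(w_i,v_i)<3\sqrt k\,\eta$ once $k\ge 2$, and the case $k=1$ forces $n=1$ and is trivial. The one obstacle beyond Lemma~\ref{almostort0} is the possibility that some $v_m$ with $m\le n$ itself lies in $I_0$ and therefore does not belong to its own cell ${\cal V}_m$; this is why both the linear-independence step and the partition bound must be rephrased to use only $v_m\ne 0$ rather than any lower bound on $\|v_m\|$.
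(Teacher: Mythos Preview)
The paper does not prove this lemma; it merely recalls it as Lemma~3.2 of \cite{BoH17}. Your argument is correct and is precisely the natural adaptation of the proof of Lemma~\ref{almostort0} (which the paper does spell out, and which is itself described there as a variant of the cited result): you add the block $I_0$ of short vectors to the error budget, replace the $c_j$ weights by $\|v_j\|^2$, and otherwise follow the same $\tilde w_i$/Gram--Schmidt scheme. One small point you leave implicit is that each cell $\mathcal V_i$ is nonempty even when $i\le n$ lies in $I_0$; this follows because otherwise your decomposition of $1=\sum_j\langle \tilde w_i,v_j\rangle^2$ would give $1\le k\eta^2+(n-1)\tan^2\eta<1$, a contradiction, so the maximizer $j^\ast$ you use for the triangle inequality always exists.
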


We will need the following consequence of Lemma \ref{almostort-lemma}. 

\begin{coro}
\label{almostort}
Let  $u_1,\ldots,u_{k}\in S^{n-1}$
and $c_1,\ldots,c_{k}\in S^{n-1}$
satisfy \eqref{ciui}, and let
$0<\eta<1/(3\sqrt{k})$. For any $i\in\{1,\ldots,k\}$, we assume that $c_i\leq \eta^2$ or
there is some $j\in\{1,\ldots,n\}$ with $|\langle u_i,u_j\rangle|\geq \cos\eta$.
Then there exists an orthonormal basis $w_1,\ldots,w_n$
such that $\angle(u_i,w_i)<3\sqrt{k}\,\eta$ for $i=1,\ldots,n$.
\end{coro}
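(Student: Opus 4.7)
The plan is to reduce Corollary~\ref{almostort} directly to Lemma~\ref{almostort-lemma} by setting $v_i = \sqrt{c_i}\,\epsilon_i u_i$ for suitable signs $\epsilon_i \in \{-1,+1\}$, and then translating the hypotheses term by term.

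First, I will choose the signs. For $i \in \{1,\ldots,n\}$, set $\epsilon_i = 1$. For $i \in \{n+1,\ldots,k\}$: if $c_i \leq \eta^2$, set $\epsilon_i = 1$; otherwise, the hypothesis furnishes some $j \in \{1,\ldots,n\}$ with $|\langle u_i, u_j\rangle| \geq \cos\eta$, and I choose $\epsilon_i$ so that $\langle \epsilon_i u_i, u_j\rangle \geq \cos\eta$, i.e., $\angle(\epsilon_i u_i, u_j) \leq \eta$. Because $(\epsilon_i u_i)\otimes(\epsilon_i u_i) = u_i \otimes u_i$, the identity $\sum_{i=1}^k v_i \otimes v_i = \Id_n$ follows immediately from \eqref{ciui}.

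Next, I verify the alternatives required by Lemma~\ref{almostort-lemma} for each $i\in\{1,\ldots,k\}$. If $c_i \leq \eta^2$, then $\|v_i\| = \sqrt{c_i} \leq \eta$, which is the first alternative. Otherwise, in the case $i \leq n$ I may take $j = i$, giving the trivial bound $\angle(v_i, v_i) = 0 \leq \eta$; and in the case $i > n$ with $c_i > \eta^2$, the sign $\epsilon_i$ was chosen exactly so that $\angle(\epsilon_i u_i, u_j) \leq \eta$, and since angles are invariant under scaling by positive numbers, $\angle(v_i, v_j) = \angle(\epsilon_i u_i, u_j) \leq \eta$. Thus the hypotheses of Lemma~\ref{almostort-lemma} are satisfied.

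Lemma~\ref{almostort-lemma} then produces an orthonormal basis $w_1,\ldots,w_n$ such that $\angle(v_i, w_i) < 3\sqrt{k}\,\eta$ for $i = 1,\ldots,n$. Since $\epsilon_i = 1$ for $i \leq n$, we have $\angle(u_i, w_i) = \angle(v_i, w_i) < 3\sqrt{k}\,\eta$, which is the desired conclusion. There is no real obstacle here; the only subtlety is the bookkeeping for the signs $\epsilon_i$, which is inconsequential because reflecting $u_i$ preserves both the rank one matrix $u_i \otimes u_i$ and the condition $|\langle u_i, u_j\rangle| \geq \cos\eta$.
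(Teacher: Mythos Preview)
Your proof is correct and follows essentially the same approach as the paper: both reduce to Lemma~\ref{almostort-lemma} by setting $v_i=\sqrt{c_i}\,u_i$ (up to sign), then flip the sign of those $v_i$ with $i>n$ and $c_i>\eta^2$ so that $\angle(v_i,v_j)\le\eta$ rather than $\angle(-v_i,v_j)\le\eta$, and finally observe that $v_1,\ldots,v_n$ are unchanged so the conclusion for $u_1,\ldots,u_n$ follows directly. The only cosmetic difference is that you fix the signs $\epsilon_i$ upfront while the paper first defines $v_i=\sqrt{c_i}\,u_i$ and replaces $v_i$ by $-v_i$ afterwards where needed.
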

\begin{proof} For $i=1,\ldots,k$ we define $v_i=\sqrt{c_i}u_i$. Then $v_1,\ldots,v_k\in \R^n\setminus \{0\}$
and $\sum_{i=1}^k v_i \otimes v_i=\Id_n$. For any $i\in\{1,\ldots,k\}$, by assumption we have $\|v_i\|=\sqrt{c_i}\le \eta$ or there is some $j\in\{1,\ldots,n\}$ such that $0\le \angle(v_i,v_j)\le \eta$ or $\pi\ge \angle(v_i,v_j)\ge \pi-\eta$. If $1\le i\le n$, then we can choose $j=i$ so that $\angle(v_i,v_j)=0\le \eta$. If $n+1\le i\le k$ and $\|v_i\|>\eta$, then there is some $j\in\{1,\ldots,n\}$ such that $0\le \angle(v_i,v_j)\le \eta$ or $0\le \angle(-v_i,v_j)\le \eta$. In the latter case, we simply replace $v_i$ by $-v_i$. The possibly modified sequence $v_1,\ldots,v_k$ satisfies all requirements of Lemma~\ref{almostort-lemma}. Since $v_1,\ldots,v_n$ remain unchanged, the assertion  follows by an application of Lemma~\ref{almostort-lemma}. 
\hfill\proofbox
\end{proof}

\medskip

The technical statements 
Lemma~\ref{absScalarProd} and
Lemma~\ref{deltaHvolCube} 
will be needed in the proof of Proposition~\ref{gammanbP-stab}:

\begin{lemma}
\label{absScalarProd}
Let $\eta\in(0,\frac{\pi}8)$, and let $u,u'\in S^1$ be such that $|\langle u,u'\rangle|\leq \cos\eta$. If $v\in S^1$ satisfies  $\cos \frac{3\pi}8\leq |\langle v,\frac{u-u'}{\|u-u'\|}\rangle|\leq \cos \frac{\pi}8$
and $\cos \frac{3\pi}8\leq |\langle v,\frac{u+u'}{\|u+u'\|}\rangle|\leq \cos \frac{\pi}8$, then
\begin{equation}
\label{absScalarProd-eq}
\Big|\,|\langle v,u\rangle|-|\langle v,u'\rangle|\,\Big|\geq  \eta/5.
\end{equation}
\end{lemma}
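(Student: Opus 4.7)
The plan is to reduce the problem to two dimensions (which we already are in, since $v,u,u'\in S^1$) and set up coordinates with respect to the naturally perpendicular pair $u+u'$, $u-u'$. Since $\langle u+u',u-u'\rangle=\|u\|^2-\|u'\|^2=0$, the vectors
\[
e_1=\frac{u+u'}{\|u+u'\|},\qquad e_2=\frac{u-u'}{\|u-u'\|}
\]
form an orthonormal basis of $\R^2$. Writing $\theta=\angle(u,u')$, we have $\|u+u'\|=2\cos(\theta/2)$ and $\|u-u'\|=2\sin(\theta/2)$, so
\[
u=a\,e_1+b\,e_2,\qquad u'=a\,e_1-b\,e_2,
\]
with $a=\cos(\theta/2)$ and $b=\sin(\theta/2)$.

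Next I would expand $v=\alpha\,e_1+\beta\,e_2$ and compute
\[
\langle v,u\rangle=a\alpha+b\beta,\qquad \langle v,u'\rangle=a\alpha-b\beta.
\]
Using the elementary identity $\bigl||x+y|-|x-y|\bigr|=2\min(|x|,|y|)$ applied to $x=a\alpha$, $y=b\beta$, I obtain the key reduction
\[
\bigl|\,|\langle v,u\rangle|-|\langle v,u'\rangle|\,\bigr|=2\min\bigl(|a\alpha|,\,|b\beta|\bigr).
\]

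It then remains to lower bound each of the four factors. The assumption $|\langle u,u'\rangle|\le\cos\eta$ translates into $\theta\in[\eta,\pi-\eta]$, so both $a=\cos(\theta/2)$ and $b=\sin(\theta/2)$ lie in the interval $[\sin(\eta/2),\cos(\eta/2)]$; in particular $\min(a,b)\ge \sin(\eta/2)$. The hypotheses on $v$ give $|\alpha|=|\langle v,e_1\rangle|\ge\cos(3\pi/8)=\sin(\pi/8)$ and, similarly, $|\beta|\ge\sin(\pi/8)$. Combining,
\[
\bigl|\,|\langle v,u\rangle|-|\langle v,u'\rangle|\,\bigr|\ge 2\sin(\eta/2)\sin(\pi/8).
\]

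Finally, since $\eta/2\in(0,\pi/16)\subset(0,\pi/2)$, the concavity bound $\sin x\ge \tfrac{2}{\pi}x$ gives $\sin(\eta/2)\ge \eta/\pi$, and a numerical check $\tfrac{2\sin(\pi/8)}{\pi}>\tfrac{1}{5}$ yields
\[
2\sin(\eta/2)\sin(\pi/8)\ge \frac{2\sin(\pi/8)}{\pi}\,\eta>\frac{\eta}{5},
\]
which is \eqref{absScalarProd-eq}. No step looks genuinely difficult; the only place to be mildly careful is the identity $\bigl||x+y|-|x-y|\bigr|=2\min(|x|,|y|)$, which absorbs all the sign ambiguity coming from the absolute values in the hypotheses on $v$ and in the conclusion.
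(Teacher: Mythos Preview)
Your proof is correct and arrives at exactly the same lower bound $2\sin(\eta/2)\sin(\pi/8)$ as the paper, but by a cleaner route. The paper uses the same coordinate frame (with $e_1=(u+u')/\|u+u'\|$, $e_2=(u-u')/\|u-u'\|$), writes $u=(\cos\alpha,\sin\alpha)$, $u'=(-\cos\alpha,\sin\alpha)$, $v=(\cos\beta,\sin\beta)$, and then carries out a four-case analysis on the position of $\beta$ relative to $\pm(\pi/2-\alpha)$ to control the signs of $\langle v,u\rangle$ and $\langle v,u'\rangle$. Your use of the identity $\bigl||x+y|-|x-y|\bigr|=2\min(|x|,|y|)$ collapses that entire case split into one line and makes the symmetry between $u$ and $u'$ (and between the two hypotheses on $v$) completely transparent. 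The final numerical step is identical in both arguments: $2\sin(\eta/2)\sin(\pi/8)\ge \tfrac{2}{\pi}\eta\sin(\pi/8)>\eta/5$.
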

\begin{proof} Since \eqref{absScalarProd-eq} considers  $|\langle v,u\rangle|$ and $|\langle v,u'\rangle|$, we may assume, by symmetry, that $u=(\cos\alpha,\sin\alpha)^\top$ and $u'=(-\cos\alpha,\sin\alpha)^\top$, where $\frac{\eta}2\leq \alpha\leq\frac{\pi}2-\frac{\eta}2$, 
and hence $\frac{u+u'}{\|u+u'\|}=(0,1)$ and $\frac{u-u'}{\|u-u'\|}=(1,0)$,
and in addition, $v=(\cos\beta,\sin\beta)^\top$ where $\frac{\pi}8\leq |\beta|\leq  \frac{3\pi}8$. 

If $\beta\geq \frac{\pi}2-\alpha$,  then
\begin{align*}
|\langle v,u\rangle|-|\langle v,u'\rangle|&=\langle v,u\rangle-\langle v,u'\rangle
=2\cos\alpha\cos\beta\\
&\geq
2\cos\left (\frac\pi2-\frac\eta2\right )   \cos\frac{3\pi}8
=2 \sin \frac\eta2\sin\frac{\pi}8 \\
&
>2 \frac{2}{\pi}\frac\eta2\sin\frac{\pi}8>0.2436\cdot\eta>\frac\eta5.    
\end{align*}

If $\frac{\pi}8\leq \beta\leq \frac{\pi}2-\alpha$,  then
\begin{align*}
|\langle v,u\rangle|-|\langle v,u'\rangle|=\langle v,u\rangle+\langle v,u'\rangle=2\sin\alpha\sin\beta\geq
2\sin\frac{\eta}2   \sin\frac{\pi}8>\frac{\eta}5.
\end{align*}

If $\alpha-\frac{\pi}2\leq \beta\leq -\frac{\pi}8$,  then
\begin{align*}
|\langle v,u\rangle|-|\langle v,u'\rangle|&=\langle v,u\rangle+\langle v,u'\rangle=2\sin\alpha\sin\beta\leq
-2\sin\frac{\eta}2   \sin\frac{\pi}8<-\frac{\eta}5.    
\end{align*}

Finally, if $\beta\leq \alpha-\frac{\pi}2$,  then
$$
|\langle v,u\rangle|-|\langle v,u'\rangle|=-\langle v,u\rangle+\langle v,u'\rangle=-2\cos\alpha\cos\beta\leq
-2\sin\frac{\eta}{2}    \cos\frac{3\pi}8<-\frac{\eta}5,
$$
proving \eqref{absScalarProd-eq}.
\hfill \proofbox
\end{proof}

\medskip

For origin symmetric convex bodies $K,L\subset\R^n$, we consider the symmetric (volume) difference  
$$
\delta_{\rm vol}(K,L)=V(K\Delta L).
$$
In the following proposition, we first  provide local bounds in terms of the volume difference for the Gaussian measure.

\begin{prop}
\label{gammanbP-stab}
For $b\in[1,2]$, $n\leq k\leq\frac{n(n+1)}2$ and $u_1,\ldots,u_k\in S^{n-1}$  and $c_1,\ldots,c_k>0$ satisfying \eqref{ciui}, there exists $\Phi\in O(n)$ such that the polytope
$P=\{x\in\R^n:\,|\langle x,u_i\rangle|\leq 1,\;i=1,\ldots,k\}$ satisfies
\begin{align}
\label{gammabP-stabvol}
\gamma_n(bP)&\leq 
\gamma_n (bB^n_\infty)-2^{-80}n^{-40n}\delta_{\rm vol}(P,\Phi B^n_\infty)^4.
\end{align}
\end{prop}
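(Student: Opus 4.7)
The plan is to rerun the transport-of-measure proof of Proposition~\ref{gammanbP-prop} but substitute the stable Ball--Barthe inequality Lemma~\ref{Ball-Barthe-stab} for \eqref{BallBarthe} to extract a quantitative gain. The case $k=n$ is trivial: \eqref{cisum0} and \eqref{ciatmost10} force $c_i=1$ for every $i$, and $\sum_{i=1}^n u_i\otimes u_i=\Id_n$ then implies that $u_1,\ldots,u_n$ is an orthonormal basis, so $P=\Phi B^n_\infty$ and $\delta_{\rm vol}=0$. Henceforth assume $n+1\le k$. Applying Lemma~\ref{ciuibig} to $\sqrt{c_i}u_i$ and relabeling gives $c_1\cdots c_n|\det[u_1,\ldots,u_n]|^2\ge\binom{k}{n}^{-1}$, and combined with $c_i\le 1$ and $|\det|\le 1$ this yields $c_i,\,|\det[u_1,\ldots,u_n]|\ge n^{-cn}$ for $i\le n$ and an absolute constant $c$.

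For each $j>n$, write $u_j=\sum_{i=1}^n\mu_{ji}u_i$; since $\sum_i|\mu_{ji}|\ge\|u_j\|=1$, some $i_j^*\le n$ has $|\mu_{ji_j^*}|\ge 1/n$. Both $(n+1)$-tuples obtained from $\{1,\ldots,n\}\cup\{j\}$ by dropping $j$ or dropping $i_j^*$ then have absolute determinant at least $n^{-cn}$, so Lemma~\ref{Ball-Barthe-stab} applies with $\beta_j\ge n^{-cn}c_j$. Running the chain \eqref{BLstep1}--\eqref{BLstep3} for $\tilde f=f_{(b)}/\Gamma_b$, whose transport map to $\gamma_1$ is exactly $\varphi_b$ of \eqref{phibdef}, and then using $\tfrac{1}{1+R}\le 1-R/2$, the stable Ball--Barthe inequality yields
$$
\gamma_n(bP)\le\gamma_n(bB^n_\infty)-\frac{\Gamma_b^n}{2(2\pi)^{n/2}}\int_{\mathcal{C}}R_j(x)\,e^{-\|\Theta(x)\|^2/2}\det(d\Theta(x))\,dx
$$
with $R_j(x)=\frac{\beta_j(\varphi_b'(\langle u_{i_j^*},x\rangle)-\varphi_b'(\langle u_j,x\rangle))^2}{4(\varphi_b'(\langle u_{i_j^*},x\rangle)+\varphi_b'(\langle u_j,x\rangle))^2}$, where $\mathcal{C},\Theta$ are from Theorem~\ref{thm:BLspecialcase}.

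To convert this into a bound in $\delta_{\rm vol}$ I would use a dichotomy with a threshold $\eta>0$ to be matched against $\delta_{\rm vol}$. If every $j>n$ satisfies $c_j\le\eta^2$ or $\max_{i\le n}|\langle u_i,u_j\rangle|\ge\cos\eta$, then Corollary~\ref{almostort} supplies an orthonormal basis $w_1,\ldots,w_n$ with $\angle(u_i,w_i)<4\sqrt{n}\,\eta$; setting $\Phi e_i=w_i$ and bounding each chip cut by an extra slab by $A(n)(d_j+\eta)$ in volume (with $d_j:=\min_{i\le n}\angle(u_j,\pm u_i)$) gives $\delta_{\rm vol}(P,\Phi B^n_\infty)\le B(n)\eta$. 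Otherwise some $j_0$ has both $c_{j_0}>\eta^2$ and $d_{j_0}>\eta$: in the plane $L=\mathrm{span}(u_{i_{j_0}^*},u_{j_0})$, Lemma~\ref{absScalarProd} provides a sector of $v\in L\cap S^{n-1}$ of universal positive angular length on which $\bigl||\langle v,u_{i_{j_0}^*}\rangle|-|\langle v,u_{j_0}\rangle|\bigr|\ge d_{j_0}/5$. Restricting $x=tv+y$ to $t\in[0.1,0.3]$ and a bounded region in $L^\perp$ keeps $|\langle u_{i_{j_0}^*},x\rangle|,|\langle u_{j_0},x\rangle|\in[0,0.3]$, and Lemma~\ref{propphib}(b)--(d) together with the mean value theorem yield $|\varphi_b'(\langle u_{i_{j_0}^*},x\rangle)-\varphi_b'(\langle u_{j_0},x\rangle)|\gtrsim d_{j_0}$, so $R_{j_0}(x)\ge n^{-cn}c_{j_0}d_{j_0}^2\ge n^{-cn}\eta^4$ on a $\gamma_n$-positive set. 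In the first case, applying the same Ball--Barthe argument to $j_0:=\arg\max_jc_jd_j^2$ yields an improvement $\ge n^{-cn}c_{j_0}d_{j_0}^2$, which combined with a direct Gaussian-measure estimate $\gamma_n(bB^n_\infty\setminus bP)\ge e^{-2n}(2\pi)^{-n/2}V(\text{chip})$ for the small-$c_j$ chips that Ball--Barthe cannot see produces a lower bound proportional to $\delta_{\rm vol}^4$; calibrating $\eta$ so the two sides of the dichotomy meet and using $\delta_{\rm vol}\le 2V(B^n_\infty)$ to trade powers then gives the claimed inequality with the explicit constant $2^{-80}n^{-40n}$.

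The principal obstacle is exactly this final balancing, because chip volumes depend linearly on $d_j$ and are independent of $c_j$, whereas the Ball--Barthe improvement is proportional to $c_jd_j^2$; bridging the two regimes forces the weighted choice $j_0=\arg\max c_jd_j^2$ together with a separate direct Gaussian estimate that absorbs the pathological case where many indices of extremely small $c_j$ cumulatively drive $\delta_{\rm vol}$ up while each yields a negligible Ball--Barthe gain.
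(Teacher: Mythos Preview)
Your approach uses exactly the paper's toolkit---Lemma~\ref{ciuibig}, the stable Ball--Barthe inequality Lemma~\ref{Ball-Barthe-stab}, Lemma~\ref{propphib} for $\varphi_b'$, Lemma~\ref{absScalarProd} to locate a good planar sector, and Corollary~\ref{almostort} for the close-to-orthonormal conclusion---and the dichotomy you set up is the one the paper uses. The difference is organizational, and it is exactly where you run into your ``principal obstacle''.

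The paper works in contrapositive: set $\varepsilon:=\gamma_n(bB^n_\infty)-\gamma_n(bP)$ and $\eta:=2^{14}n^{4n}\varepsilon^{1/4}$. The dichotomy claim (for every $j$, either $c_j\le\eta^2$ or $\max_{i\le n}|\langle u_i,u_j\rangle|\ge\cos\eta$) is then proved by contradiction via Ball--Barthe stability on a fixed small ball $\Xi\subset bP$, essentially along the lines of your ``otherwise'' branch. Once the claim holds, Corollary~\ref{almostort} gives $P\subset(1+\aleph\varepsilon^{1/4})\Phi B^n_\infty$, and the volume-difference bound follows from Ball's inequality $V(P)\le V(B^n_\infty)$ together with a \emph{second} use of the deficit hypothesis:
\[
\delta_{\rm vol}(P,\Phi B^n_\infty)\le 2V(\Phi B^n_\infty\setminus P)\le 2(2\pi)^{n/2}e^{2n}\Bigl(\gamma_n\bigl((1+\aleph\varepsilon^{1/4})bB^n_\infty\bigr)-\gamma_n(bB^n_\infty)+\varepsilon\Bigr).
\]
This is what dissolves your obstacle. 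The ``small-$c_j$, large-$d_j$'' chips that Ball--Barthe cannot detect are automatically controlled: were any such chip large in volume, $\gamma_n(bP)$ would already be well below $\gamma_n(bB^n_\infty)$, i.e.\ $\varepsilon$ would be large. No chip-by-chip accounting, no weighted choice $j_0=\arg\max_j c_jd_j^2$, no separate Gaussian estimates per slab.

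By contrast, your direct assembly is not completed. In the branch where the dichotomy condition holds, the assertion $\delta_{\rm vol}(P,\Phi B^n_\infty)\le B(n)\eta$ is unjustified as stated: nothing in the dichotomy bounds the chip cut by a $u_j$ with $c_j\le\eta^2$ and large $d_j$, since $c_j$ does not enter the definition of $P$. Your attempted patch then mixes the two branches and does not visibly produce the $\delta_{\rm vol}^4$ dependence. The contrapositive organization, with $\eta$ tied to $\varepsilon$ from the outset and the deficit bound reused at the end, is the missing structural idea.

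A small technical correction: in your displayed gain integral the exponential factor should be $\prod_i e^{-c_i\varphi_b(\langle u_i,x\rangle)^2/2}$, not $e^{-\|\Theta(x)\|^2/2}$; the quadratic inequality \eqref{BLRBLquad} goes the wrong way for the lower bound you need on the gain. On the region $\Xi$ the paper simply uses $|\langle u_i,x\rangle|\le 1/4$ and Lemma~\ref{propphib}(c) to get $|\varphi_b(\langle u_i,x\rangle)|\le 1$, hence $\prod_i e^{-c_i\varphi_b^2/2}\ge e^{-n/2}$.
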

\begin{proof} For $x\in P$, \eqref{ciui}  and
\eqref{cisum0} yield 
\begin{equation}
\label{PJonhHomothety}
\|x\|^2=\sum_{i=1}^kc_i\langle x,u_i\rangle^2\leq\sum_{i=1}^kc_i\leq n, \mbox{ \ and hence }P\subset \sqrt{n}\,B^n_2.
\end{equation}
In particular, for 
any $\Phi\in O(n)$, we have %$\delta_H(P,\Phi B^n_\infty)\leq \sqrt{n}$, and also 
$\delta_{\rm vol}(P,\Phi B^n_\infty)\leq V(P)+V(\Phi B^n_\infty)\leq 2\cdot 2^n$
according to Ball \cite{Bal89}.
Therefore,
Proposition~\ref{gammanbP-stab} is implied by the following statement:
If 
\begin{equation}
\label{gammabPeps-stab}
\gamma_n(bP)\geq 
\gamma_n (bB^n_\infty)-\varepsilon
\end{equation}
for $\varepsilon\in(0,\varepsilon_0)$,  where $\varepsilon_0=2^{-68}n^{-28n}$, then there exists $\Phi\in O(n)$ such that
\begin{align}
\label{gammabP-stabvoleps}
\delta_{\rm vol}(P,\Phi B^n_\infty)&\leq 
2^{20}n^{10n}\varepsilon^{\frac14}.
\end{align}
Hence \eqref{gammabP-stabvol} 
follows, since for $\varepsilon\ge \varepsilon_0$ the asserted inequality holds  by the preceding rough bound.

In order to prove \eqref{gammabP-stabvoleps}, the core claim is that  \eqref{gammabPeps-stab} yields
a $\Phi\in O(n)$ such that
\begin{equation}
\label{Pclosetocube}
P\subset (1+\aleph\sqrt[4]{\varepsilon}) \Phi B^n_\infty
\end{equation}
for $\aleph=2^{14}n^{6n}$.

As at the beginning of the proof of Theorem \ref{ell-sym-Loewner-stab},  applying Lemma~\ref{ciuibig} with $v_i=\sqrt{c_i}\,u_i$,
and using that $\binom{k }{ n}\le  (en)^n$,
we may assume (after re-indexing if needed), that
\begin{equation}
\label{prodcu1n}
c_1\cdots c_n\cdot\Big|\det[u_1,\ldots,u_n]\Big|^2\geq (en)^{-n}.
\end{equation}

We set
\begin{equation}
\label{etadefepsilon}
\eta=2^{14}n^{4n}\cdot\varepsilon^{1/4}<\frac18,
\end{equation}
and claim that if $i\in\{1,\ldots,k\}$, then
\begin{equation}
\label{uconsitionBLRBLgsg}
\mbox{$c_i\leq \eta^2$, or there exists some $j\in\{1,\ldots,n\}$ with
$|\langle u_i,u_j\rangle|\geq \cos\eta$.}
\end{equation}
We suppose, on the contrary, that \eqref{uconsitionBLRBLgsg} does not hold and seek a contradiction. Hence we may assume
$$
c_{n+1}>\eta^2\mbox{ \ and \ }|\langle u_j,u_{n+1}\rangle|< \cos\eta
\mbox{ \ for $j=1,\ldots,n$,}
$$
Now  $u_{n+1}=\sum_{i=1}^{n}\lambda_iu_i$, where $\lambda_1,\ldots,\lambda_{n}\in\R$ are uniquely determined. 
We may assume, by reordering, that $|\lambda_1|\geq\ldots\geq|\lambda_n|$. Choose $\theta_i\in\{-1,1\}$ such that $\theta_i\lambda_i=|\lambda_i|$ for $i=1,\ldots,n$, and hence
$u_{n+1}=\sum_{i=1}^{n}|\lambda_i|\theta_iu_i$. As $u_{n+1}\not\in{\rm conv}\{o,\theta_1u_1,\ldots,\theta_nu_n\}$, we have
$\sum_{i=}^n|\lambda_i|>1$, thus
 $|\lambda_1|\geq \frac1{n}$.
Therefore, $c_{n+1}>\eta^2$, $c_{1}\leq 1$ (due to \eqref{ciatmost10}) and \eqref{prodcu1n} imply
\begin{equation}
\label{prod2n+2}
c_2\cdots c_{n+1}\det[u_2,\ldots,u_{n+1}]^2\geq
(en)^{-n}\cdot\frac{\eta^2}{n^2}=
e^{-n}n^{-n-2}\eta^2.
\end{equation}

Next we observe that
$\angle (u_1-u_{n+1},u_1+u_{n+1})=\frac{\pi}2$, and
let $\tilde{w}\in S^{n-1}\cap L$
for $L={\rm lin}\{u_1,u_{n+1}\}$ satisfy that
$\angle (\tilde{w},u_1-u_{n+1})=\angle (\tilde{w},u_1+u_{n+1})=\frac{\pi}4$.
If $v\in S^{n-1}$ with $\angle (v,\tilde{w})\leq\frac{\pi}8$, then the orthogonal projection $v'$ of $v$ to $L$ satisfies that $\|v'\|\geq \cos\frac{\pi}8>\frac12$ and
$\angle (v',\tilde{w})\leq\frac{\pi}8$; therefore, $|\langle u_1,u_{n+1}\rangle|< \cos\eta$ and
Lemma~\ref{absScalarProd} yield that
\begin{align}
\label{diff1n+1}
\Big|\,\left|\left\langle v,u_1\right\rangle\right|
-\left|\left\langle v,u_{n+1}\right\rangle\right|\,\Big|&=\Big|\,\left|\left\langle v',u_1\right\rangle\right|
-\left|\left\langle v',u_{n+1}\right\rangle\right|\,\Big|>\eta/8,
\end{align}
provided $\angle (v,\tilde{w})\leq\frac{\pi}8$ for $v\in S^{n-1}$.

Now there exists a $w\in L\cap S^{n-1}$ with $\angle (w,\tilde{w})\leq\frac{\pi}{16}$ such that $|\frac{\pi}2-\angle (w,u_1)|\geq\frac{\pi}{32}$ and $|\frac{\pi}2-\angle (w,u_{n+1})|\geq\frac{\pi}{32}$, and we consider the ball
$$
\Xi=\frac{w}8+2^{-9}B^n_2=\frac{w}8+\frac1{8\cdot 64}\,B^n_2\subset bP.
$$
If $x\in \Xi$, then  
$\frac1{16}\leq \|x\|\leq \frac14$ and $\angle(x,w)\le \frac{\pi}{64}$, and hence also 
$|\frac{\pi}2-\angle (x,u_1)|\geq\frac{\pi}{64}$ and $|\frac{\pi}2-\angle (x,u_{n+1})|\geq\frac{\pi}{64}$. From the choice of $w$ 
and $\frac1{16}\cdot\frac1{64}=2^{-10}$ we deduce that if $x\in\Xi$, then
\begin{equation}
\label{xXiu1un+1}
\begin{array}{rcll}
|\langle x,u_1\rangle|&\geq& 2^{-10}, &  \\[1ex]
|\langle x,u_{n+1}\rangle|&\geq& 2^{-10}, &\\[1ex]
|\langle x,u_i\rangle|&\leq& 2^{-2},&i=1,\ldots,k.
\end{array}
\end{equation}
We have $\angle (x,\tilde{w})\leq\frac{\pi}{8}$ as $\angle (x,w)\leq\frac{\pi}{64}$, and hence
 \eqref{diff1n+1} yields that
\begin{equation}
\label{xXiu1minusun+1}
\Big|\,\left|\left\langle x,u_1\right\rangle\right|
-\left|\left\langle x,u_{n+1}\right\rangle\right|\,\Big|>2^{-7}\eta
\mbox{ \ \ \ for $x\in\Xi$.}
\end{equation}

As in the proof of Proposition~\ref{gammanbP-prop}, 
let
$$
f_{(b)}(t)=
\left\{
\begin{array}{cl}
\frac1{\sqrt{2\pi}}\,e^{-\frac{t^2}2},&\mbox{ if $|t|\leq b$},\\[1ex]
0,&\mbox{ if $|t|> b$.}
\end{array}
\right.
$$
Hence ({\it cf.} \eqref{gammabPright} and \eqref{gammabPleft})
\begin{align}
\label{gammabPright0}
\gamma_n ([-b,b]^n)&=
\prod_{i=1}^k\left(\int_{\R}f_{(b)}(t)\,\dint t\right)^{c_i},\\
\label{gammabPleft0}
\gamma_n(bP)&=
\int_{\R^n}\prod_{i=1}^kf_{(b)}(\langle x,u_i\rangle)^{c_i}\,\dint x.
\end{align}
We consider the probability density ({\it cf.} Section~\ref{secAuxiliaryJohn})
$$
\tilde{f}_{(b)}=\frac1{\Gamma_b}\,f_{(b)} \mbox{ \ for }
\Gamma_b=\int_{\R}f_{(b)}(s)\, \dint s=\int_{-b}^bf_{(b)}(s)\, \dint s<1,
$$
and the corresponding transport map $\varphi_{b}:(-b,b)\to\R$ such that
\[
%\label{phibdef0}
\frac1{\sqrt{2\pi}}\int_{-\infty}^{\varphi_{b}(t)} e^{-\frac{s^2}2}\,\dint s
=\int_{-b}^t \tilde{f}_{(b)}(s)\, \dint s.
\]

It follows from Lemma~\ref{propphib}, \eqref{xXiu1un+1} and $0.049>2^{-5}$ that if $x\in\Xi$, then
\begin{equation}
\label{xXiu1un+1phi}
\begin{array}{rcll}
|\varphi_b(\langle x,u_i\rangle)|&\leq& 1,&i=1,\ldots,k,\\
\varphi'_b(\langle x,u_i\rangle)&\geq& 1,&i=1,\ldots,k,\\
\varphi'_b(\langle x,u_i\rangle)&\leq& 2,&i=1,\ldots,k,\\
\varphi_b''(t)&\geq& 2^{-15},&
\mbox{if $t$ is between $|\langle x,u_1\rangle|$ and $|\langle x,u_{n+1}\rangle|$.}
\end{array}
\end{equation}
Therefore, \eqref{xXiu1minusun+1}
yields that if $x\in\Xi$, then
\begin{align}
\label{xXiu1minusun+1phi}
&\Big|\varphi'_b\left(\left\langle x,u_1\right\rangle\right)
-\varphi'_b\left(\left\langle x,u_{n+1}\right\rangle\right)\Big|\nonumber\\
&\quad =
\Big|\varphi'_b\left(\left|\left\langle x,u_1\right\rangle\right|\right)
-\varphi'_b\left(\left|\left\langle x,u_{n+1}\right\rangle\right|\right)\Big|>2^{-22}\eta.
\end{align}
We apply Lemma~\ref{Ball-Barthe-stab}, the stability version 
of the Ball--Barthe inequality \eqref{BallBarthe}, with
$\beta=n^{-4n}\eta^2<e^{-n}n^{-n-2}\eta^2$, 
based on \eqref{prodcu1n} and \eqref{prod2n+2}, and using the estimates 
\eqref{xXiu1un+1phi} and
\eqref{xXiu1minusun+1phi}
to conclude that if $x\in\Xi$, then
\begin{align}
\nonumber
&\prod_{i=1}^k \varphi'_b(\langle x,u_i\rangle)^{c_i}\\
&\nonumber\leq
\det\left( \sum_{i=1}^k\varphi'_b(\langle x,u_i\rangle)c_iu_i\otimes u_i\right) \left( 1+\frac{\beta(\varphi'_b(\langle x,u_1\rangle)-\varphi'_b(\langle x,u_{n+1}\rangle))^2}{4(\varphi'_b(\langle x,u_1\rangle)+\varphi'_b(\langle x,u_{n+1}))^2}\right)^{-1} \\
\label{Ball-Barthe-Xi}
&\leq \det\left( \sum_{i=1}^k\varphi'_b(\langle x,u_i\rangle)c_iu_i\otimes u_i\right)
-2^{-51}n^{-4n}\eta^4,
\end{align}
where we also used that
$(1+s)^{-1}<1-\frac{s}2$ if $s\in(0,\frac12)$ and
the
Ball--Barthe inequality \eqref{BallBarthe} and 
\eqref{xXiu1un+1phi} imply that
$$
\det\left( \sum_{i=1}^k\varphi'_b(\langle x,u_i\rangle)c_iu_i\otimes u_i\right)\geq \prod_{i=1}^k \varphi'_b(\langle x,u_i\rangle)^{c_i}\geq 1.
$$

We observe that the ${\cal C}$ in \eqref{BLstep1} is just $bP$,
and use \eqref{Ball-Barthe-Xi} for  $x\in\Xi$ instead of \eqref{BallBarthe} in \eqref{BLstep1}. 
We deduce from \eqref{gammabPright0}
and \eqref{gammabPleft0} that
\begin{align*}
&\frac{\gamma_n(bP)}{\gamma_n ([-b,b]^n)}=
\int_{\R^n}\prod_{i=1}^k\tilde{f}_{(b)}(\langle u_i,x\rangle)^{c_i}\,\dint x\\
&
%\label{BLstep10}
=
\int_{bP}\left(\prod_{i=1}^k\gamma_1(\varphi_b(\langle u_i,x\rangle))^{c_i}\right)
\left(\prod_{i=1}^k\varphi_b'(\langle u_i,x\rangle)^{c_i}\right)\,\dint x\\
%\nonumber%\label{BLstep2}
&\leq  \frac{1}{(2\pi)^{\frac{n}2}}\int_{bP\setminus\Xi}\left(\prod_{i=1}^ke^{-c_i\varphi_b(\langle u_i,x\rangle)^2/2}\right)
\det\left(\sum_{i=1}^kc_i\varphi_b'(\langle u_i,x\rangle )\,u_i\otimes u_i\right)\,\dint x \,+\\
&\quad +\frac{1}{(2\pi)^{\frac{n}2}}\int_{\Xi}\left(\prod_{i=1}^ke^{-c_i\varphi_b(\langle u_i,x\rangle)^2/2}\right)\times\\
&\quad\qquad\times\left(
\det\left(\sum_{i=1}^kc_i\varphi_b'(\langle u_i,x\rangle )\,u_i\otimes u_i\right)
-2^{-51}n^{-4n}\eta^4\right)\,\dint x.
\end{align*}
Here $V(\Xi)=2^{-9n}\frac{\pi^{n/2}}{\Gamma(\frac{n}2+1)}>2^{-9n}n^{-\frac{n}2-\frac12}\frac{(e\pi)^{n/2}}{4}$, and \eqref{xXiu1un+1phi} yields that if $x\in\Xi$, then
$$
\prod_{i=1}^ke^{-c_i\varphi(\langle u_i,x\rangle)^2/2}\geq
e^{-\frac12\sum_{i=1}^kc_i}=e^{-n/2};
$$
therefore, we deduce from \eqref{BLstep3} that
$$
\frac{\gamma_n(bP)}{\gamma_n ([-b,b]^n)}\leq 1-\frac{V(\Xi)}{(2\pi)^{\frac{n}2}}\cdot e^{-n/2}2^{-51}n^{-4n}\eta^4
<1-2^{-53}n^{-15n}\eta^4.
$$
We have  $\gamma_n ([-b,b]^n)=(2\Phi(b)-1)^n\geq (2\Phi(1)-1)^n>2^{-n}$ for the
cumulative distribution function
$\Phi(t)=\frac1{\sqrt{2\pi}}\int_{-\infty}^{t} e^{-\frac{s^2}2}\,\dint s$, and hence, by \eqref{gammabPeps-stab}
$$
\gamma_n ([-b,b]^n)-
\varepsilon<\gamma_n(bP)<\gamma_n ([-b,b]^n)-
2^{-53}n^{-16n}\eta^4.
$$
These inequalities contradict \eqref{etadefepsilon}, and in turn prove the claim
\eqref{uconsitionBLRBLgsg}.

\medskip 

Since $\eta<1/(3\sqrt{k})$ and due to \eqref{uconsitionBLRBLgsg},  Corollary~\ref{almostort} 
yields that there exists an orthonormal basis $w_1,\ldots,w_n$
such that 
\begin{equation}
\label{uiwidistance}
\angle (u_i,w_i)<3\sqrt{k}\,\eta<3n\eta \mbox{ \ for }i=1,\ldots,n. 
\end{equation}
Let $\Phi\in O(n)$ satisfy that $\Phi B^n_\infty$ is the cube $\{x\in\R^n:|\langle x,w_i\rangle|\leq 1\}$.
For $x\in P$, 
 \eqref{PJonhHomothety} and \eqref{uiwidistance} imply that
if $i=1,\ldots n$, then
$$
|\langle x,w_i\rangle|\leq
|\langle x,u_i\rangle|+
|\langle x,w_i-u_i\rangle|\leq
1+3n^{3/2}\eta,
$$
and hence
$$
P\subset (1+\aleph\sqrt[4]{\varepsilon}) \Phi B^n_\infty
$$
for $\aleph=2^{14}n^{6n}$,
as claimed in \eqref{Pclosetocube}. 

Since $(1+t)^n<e^{nt}<1+2nt$ for $0<t<1/n$, it follows that $(1+\aleph\sqrt[4]{\varepsilon})^n<1+2n\aleph\sqrt[4]{\varepsilon}$ if $\varepsilon< \varepsilon_0$. As $V(b\Phi B^n_\infty)\leq 4^n$, we observe that 
\begin{align}
\nonumber
\gamma_n\left((1+\aleph\sqrt[4]{\varepsilon}) b\Phi B^n_\infty\setminus ( b\Phi B^n_\infty)\right)&\leq
(2\pi)^{-n/2}V\left((1+\aleph\sqrt[4]{\varepsilon}) b\Phi B^n_\infty\setminus ( b\Phi B^n_\infty)\right)\\
\label{gammanbigcubediff}
&\leq
(2\pi)^{-n/2}2n\aleph\sqrt[4]{\varepsilon}4^n.
\end{align}

We have $V(P)\leq V(B^n_\infty)\leq V(\Phi B^n_\infty)$ according to Ball \cite{Bal89}, and hence $2V(\Phi B^n_\infty\setminus P)\geq  \delta_{\rm vol}(\Phi B^n_\infty,P)$. In addition, 
$e^{-\|x\|^2/2}\geq e^{-2n}$ if $x\in bP\cup b\Phi B^n_\infty$ by \eqref{PJonhHomothety}; therefore,
\eqref{Pclosetocube} and 
\eqref{gammanbigcubediff}, and finally \eqref{gammabPeps-stab} 
and $\aleph=2^{14}n^{6n}$ yield
\begin{align*}
 \delta_{\rm vol}(\Phi B^n_\infty,P)&
 \leq 2 V(b\Phi B^n_\infty\setminus (bP))
 \leq
 2\cdot (2\pi)^{n/2}e^{2n}\gamma_n(b\Phi B^n_\infty\setminus (bP))\\
&\leq
 2\cdot (2\pi)^{n/2}e^{2n}\gamma_n\left((1+\aleph\sqrt[4]{\varepsilon})b\Phi B^n_\infty\setminus (bP)\right)\\
 &\leq
 2\cdot (2\pi)^{n/2}e^{2n}\left(\gamma_n\left((1+\aleph\sqrt[4]{\varepsilon})b\Phi B^n_\infty\right)
 -\gamma_n\left(b\Phi  B^n_\infty\right)+\varepsilon\right)\\
 &=
 2\cdot (2\pi)^{n/2}e^{2n}\left(\gamma_n\left((1+\aleph\sqrt[4]{\varepsilon})b\Phi B^n_\infty\setminus (b\Phi B^n_\infty)\right)+\varepsilon\right)\\
 &\le 2\cdot (2n)\cdot e^{2n}\cdot \aleph\cdot  4^n
  \cdot \sqrt[4]{\varepsilon}+2\cdot 2^{2n}\cdot 2^{3n}\varepsilon\le 2^{20}\cdot n^{10n}\sqrt[4]{\varepsilon},
 \end{align*}
 which proves \eqref{gammabP-stabvoleps}.
\hfill\proofbox
\end{proof}

\medskip

The following lemma allows us to compare the Hausdorff distance and the volume difference. The lemma is a straightforward consequence of the proof of a  result due to Groemer \cite[Theorem (ii)]{Groemer2000} in the case of symmetric convex bodies.

\begin{lemma}
\label{deltaHvolCube}
Let $K,L\subset\R^n$ be $o$-symmetric convex bodies. If $rB^n_2\subset K,L\subset R B^n_2$, then
$$
\delta_{\rm H}(K,L)\le \left(\frac{n}{\kappa_{n-1}}\right)^{\frac{1}{n}}\left(\frac{R}{r}\right)^{\frac{n-1}{n}}\delta_{\rm vol}(K,L)^{\frac{1}{n}}.
$$
If $r=1,R=\sqrt{n}$, then 
$$
\delta_{\rm H}(K,L)\le n\cdot n^{\frac{1}{2n}}
\delta_{\rm vol}(K,L)^{\frac{1}{n}}.
$$
\end{lemma}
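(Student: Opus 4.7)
The plan is to produce, inside $K\setminus L$ (or, after interchanging $K$ and $L$, inside $L\setminus K$), an explicit subset whose volume is bounded below by $(\kappa_{n-1}/n)(r/R)^{n-1}h^n$, where $h:=\delta_{\rm H}(K,L)$. This is Groemer's strategy; central symmetry is not actually needed, only the two-sided inclusion $rB_2^n\subset K,L\subset RB_2^n$. Assume $h>0$ and (after possibly swapping $K,L$) fix $x_0\in\partial K$ with $d(x_0,L)=h$, together with a nearest point $y_0\in\partial L$ to $x_0$. Set $v_0:=(x_0-y_0)/h\in S^{n-1}$; then $v_0$ is an outer unit normal of $L$ at $y_0$, so
\begin{equation*}
L\subset\{z\in\R^n:\langle z,v_0\rangle\le t_0\},\qquad t_0:=h_L(v_0)=\langle y_0,v_0\rangle\ge r
\end{equation*}
(the lower bound coming from $rB_2^n\subset L$), while $\langle x_0,v_0\rangle=t_0+h\le R$ because $x_0\in RB_2^n$.

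Next let $B_\perp:=rB_2^n\cap v_0^\perp$, an $(n-1)$-ball of $(n-1)$-volume $\kappa_{n-1}r^{n-1}$ contained in $K$, and consider
\begin{equation*}
\Psi\colon B_\perp\times\bigl[t_0/(t_0+h),1\bigr]\to\R^n,\qquad \Psi(w,\mu)=(1-\mu)w+\mu x_0.
\end{equation*}
Its image lies in $\conv(\{x_0\}\cup B_\perp)\subset K$; moreover, since $w\in v_0^\perp$ gives $\langle\Psi(w,\mu),v_0\rangle=\mu(t_0+h)\ge t_0$, the image meets the interior of $L$ only in a null set. Projecting $\Psi(w,\mu)$ onto $v_0$ shows $\mu$, and then $w$, are determined by the image whenever $\mu<1$, so $\Psi$ is injective almost everywhere. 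In an orthonormal basis $e_1,\ldots,e_{n-1},v_0$ of $\R^n$, the first $n-1$ columns of the Jacobian matrix of $\Psi$ are $(1-\mu)e_1,\ldots,(1-\mu)e_{n-1}$, and the last column is $x_0-w$, whose $v_0$-coordinate is $\langle x_0,v_0\rangle=t_0+h$; the resulting upper-triangular matrix has determinant $(1-\mu)^{n-1}(t_0+h)$. The change-of-variables formula, combined with $t_0+h\le R$, now yields
\begin{equation*}
V(K\setminus L)\ge(t_0+h)\kappa_{n-1}r^{n-1}\int_{t_0/(t_0+h)}^{1}(1-\mu)^{n-1}\,\dint\mu=\frac{\kappa_{n-1}r^{n-1}h^n}{n(t_0+h)^{n-1}}\ge\frac{\kappa_{n-1}}{n}\Bigl(\frac{r}{R}\Bigr)^{n-1}h^n,
\end{equation*}
and solving for $h$ produces the first claimed inequality.

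For the special case $r=1,R=\sqrt{n}$, plugging into the first inequality reduces the claim to verifying $(n/\kappa_{n-1})^{1/n}n^{(n-1)/(2n)}\le n\cdot n^{1/(2n)}$, i.e.\ $\kappa_{n-1}\ge n^{-n/2}$, which is an easy consequence of Stirling's formula applied to $\kappa_{n-1}=\pi^{(n-1)/2}/\Gamma((n+1)/2)$. The only delicate point is the choice of base $B_\perp=rB_2^n\cap v_0^\perp$ rather than all of $rB_2^n$: imposing $\langle w,v_0\rangle=0$ for every $w$ in the base simultaneously makes the Jacobian matrix upper triangular (with easily computable determinant) and guarantees the half-space containment $\langle\Psi,v_0\rangle\ge t_0$, so that the image automatically avoids $L$. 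Everything else is routine.
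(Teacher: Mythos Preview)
Your proof is correct and follows precisely Groemer's strategy, which is exactly what the paper cites (the paper merely refers to \cite{Groemer2000} and notes that the circumradius replaces the diameter, whereas you carry out the cone-construction argument in full). Your observation that $o$-symmetry is not actually used---only the two-sided inclusion $rB_2^n\subset K,L\subset RB_2^n$---is correct and a small sharpening of the paper's formulation.
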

\begin{proof}
The first assertion follows from the proof 
of \cite[Theorem (ii)]{Groemer2000}, where in the case of symmetric convex bodies instead of the diameter of $K,L$ the circumradius can be used in the argument. 

The second part follows from the first part by basic calculus, using  that 
$$
\Gamma\left(\frac{n+1}{2}\right)\le \sqrt{\pi(n-1)}\left(\frac{n-1}{2e}\right)^{\frac{n-1}{2}}\frac{3}{2}
$$
for $n\ge 2$.
\hfill\proofbox
\end{proof}

\medskip 

Now we are ready to prove a stability version of Theorem~\ref{ell-sym-John}.

\begin{theo}
\label{ell-sym-John-stab}
If $K$ is an origin symmetric convex body in $\R^n$ such that $B^n_2\subset K$ is the John ellipsoid of $K$, then
there exists $\Phi\in O(n)$   such that
\begin{align}
\label{ellKBinfJohnvol}
\ell(K)&\geq 
\ell (B^n_\infty)+2^{-88}n^{-40n}\delta_{\rm vol}(K,\Phi B^n_\infty)^4,\\
\label{ellKBinfJohnH}
\ell(K)&\geq 
\ell(B^n_\infty)+2^{-90}n^{-44n}\delta_{\rm H}(K,\Phi  B^n_\infty)^{4n}.
\end{align}
\end{theo}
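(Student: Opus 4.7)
The plan is to combine the quantitative Brascamp--Lieb stability bound of Proposition~\ref{gammanbP-stab} (integrated over a short range of a parameter $b$) with the monotonicity $\ell(K)\ge \ell(P)$ for the John polytope $P$ and the direct Gaussian estimate of Lemma~\ref{EllDifference}, and then to extract the Hausdorff version from the volumetric one via Lemma~\ref{deltaHvolCube}. First I apply John's characteristic condition to obtain $u_1,\ldots,u_k\in S^{n-1}\cap\partial K$ and weights $c_1,\ldots,c_k>0$ with $n\le k\le n(n+1)/2$ satisfying \eqref{ciui}, and set
\[
P=\{x\in\R^n:|\langle x,u_i\rangle|\le 1,\,i=1,\ldots,k\}.
\]
Then $K\subset P\subset \sqrt{n}\,B_2^n$ by \eqref{PJonhHomothety}, so $\ell(K)\ge \ell(P)$ by monotonicity of the Minkowski functional.

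Next, fix $\Phi\in O(n)$ at which the function $\Psi\mapsto\delta_{\rm vol}(P,\Psi B_\infty^n)$ attains its minimum on $O(n)$; such a $\Phi$ exists by compactness. For each $b\in[1,2]$, Proposition~\ref{gammanbP-stab} provides an \emph{a priori} $b$-dependent $\Phi_b\in O(n)$ with
\[
\gamma_n(bB_\infty^n)-\gamma_n(bP)\ge 2^{-80}n^{-40n}\delta_{\rm vol}(P,\Phi_bB_\infty^n)^4\ge 2^{-80}n^{-40n}\delta_{\rm vol}(P,\Phi B_\infty^n)^4,
\]
the last step using the minimality of $\Phi$. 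Since $\gamma_n(tP)\le \gamma_n(tB_\infty^n)$ for all $t>0$ by Proposition~\ref{gammanbP-prop}, integrating this uniform estimate over $t\in[1,2]$ inside the representation \eqref{ellGaussianint0} of $\ell$ gives
\[
\ell(P)-\ell(B_\infty^n)\ge 2^{-80}n^{-40n}\delta_{\rm vol}(P,\Phi B_\infty^n)^4.
\]

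To bridge down to $K$, Lemma~\ref{EllDifference} applied with $R=\sqrt{n}$ produces $\ell(K)-\ell(P)\ge \bigl(\tfrac{n}{2\pi e}\bigr)^{n/2}n^{-(n+1)/2}V(P\setminus K)$. Writing $A=V(P\setminus K)$ and $B=\delta_{\rm vol}(P,\Phi B_\infty^n)$, the triangle inequality for the symmetric difference metric yields $\delta_{\rm vol}(K,\Phi B_\infty^n)\le A+B$, so at least one of $A,B$ is bounded below by $\tfrac12\delta_{\rm vol}(K,\Phi B_\infty^n)$. In the $A$-dominant case, the linear bound becomes quartic after invoking $A^4\le A\cdot V(\sqrt{n}B_2^n)^3$, and the resulting prefactor, of order $n^{-O(n)}$ with a much smaller exponent than $40n$, beats the Proposition's $n^{-40n}$ for all $n\ge 2$; thus the binding rate is $n^{-40n}$ from the $B$-dominant case. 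Summing the two contributions $\ell(K)-\ell(P)$ and $\ell(P)-\ell(B_\infty^n)$ and absorbing the factor $1/8$ from $(a+b)^4\le 8(a^4+b^4)$ (or from the case split) into $2^{-8}$ delivers \eqref{ellKBinfJohnvol}.

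Finally, \eqref{ellKBinfJohnH} follows from \eqref{ellKBinfJohnvol} via Lemma~\ref{deltaHvolCube}: since $B_2^n\subset K,\Phi B_\infty^n\subset \sqrt{n}\,B_2^n$, the lemma gives $\delta_{\rm H}(K,\Phi B_\infty^n)^n\le n^n\sqrt{n}\,\delta_{\rm vol}(K,\Phi B_\infty^n)$, equivalently
\[
\delta_{\rm vol}(K,\Phi B_\infty^n)^4\ge n^{-4n-2}\delta_{\rm H}(K,\Phi B_\infty^n)^{4n},
\]
and substituting into \eqref{ellKBinfJohnvol} absorbs the extra $n^{-2}$ into the claimed constant $2^{-90}n^{-44n}$. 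All the conceptual work is in Proposition~\ref{gammanbP-stab}; the only real subtlety here is securing a single orthogonal map $\Phi$ that works uniformly for all $b\in[1,2]$, which is cleanly handled by taking $\Phi$ to minimize $\delta_{\rm vol}(P,\cdot\,B_\infty^n)$ on $O(n)$. The remainder of the argument is careful bookkeeping of the explicit constants.
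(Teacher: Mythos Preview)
Your proof is correct and follows the same architecture as the paper: John's condition produces $P\supset K$, Proposition~\ref{gammanbP-stab} is integrated over $b\in[1,2]$ (with $\Phi$ chosen once to minimize $\delta_{\rm vol}(P,\cdot\,B^n_\infty)$, exactly as in the paper) to control $\ell(P)-\ell(B^n_\infty)$, a case split bridges to $K$, and Lemma~\ref{deltaHvolCube} yields the Hausdorff version. The one difference is in the bridging step: you bound $\ell(K)-\ell(P)$ via Lemma~\ref{EllDifference} and then split on which of $A=V(P\setminus K)$ and $B=\delta_{\rm vol}(P,\Phi B^n_\infty)$ dominates in the triangle inequality $\delta_{\rm vol}(K,\Phi B^n_\infty)\le A+B$, whereas the paper splits on whether $V(\Phi B^n_\infty\setminus K)\lessgtr 2V(\Phi B^n_\infty\setminus P)$ and in the second case estimates $\gamma_n(tP)-\gamma_n(tK)$ directly through the Gaussian density on $t\in[1,2]$, using Ball's inequality $V(K)\le V(P)\le V(B^n_\infty)$. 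Both variants give constants of the same order; your reuse of Lemma~\ref{EllDifference} is a clean shortcut that avoids the paper's ad hoc density estimate.
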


\begin{proof}
According to John's  characteristic condition \eqref{John-iso},
there exist $u_1,\ldots,u_k\in S^{n-1}\cap \partial K$ and $c_1,\ldots,c_k>0$ with $n\leq k \leq \frac{n(n+1)}2$ that
satisfy \eqref{ciui}. 
 
For
$P=\{x\in\R^n:\,|\langle x,u_i\rangle|\leq 1,\;i=1,\ldots,k\}$,
we have 
\begin{equation}
\label{KinPJohn}
\mbox{$K\subset P\subset\sqrt{n}B^n_2$ and $\ell(tK)\geq \ell(tP)$  for $t>0$.}
\end{equation}
We deduce from \eqref{ellGaussianint} that
 \begin{align}
\label{ellGaussianintPK}
\ell(K)-\ell(P)&=\int_0^\infty(\gamma_n(tP)-\gamma_n(tK))\,\dint t.
\end{align}
To estimate $\ell(P)$, first observe that \eqref{gammabP} yields 
\begin{equation}
\label{gammabP0}
\gamma_n(tP)\leq \gamma_n 
(tB^n_\infty)\mbox{ \ for $t>0$.}
\end{equation}
Next, let $\Phi\in O(n)$   be such that
\begin{align*}
\delta_{\rm vol}(P,\Phi  B^n_\infty)&=\min_{\Phi'\in O(n)}\delta_{\rm vol}(P,\Phi' B^n_\infty).
\end{align*}
Hence if $t\in[1,2]$, then
 Proposition~\ref{gammanbP-stab} implies that
\begin{align}
\label{gammabP-stabvol0}
\gamma_n(tP)&\leq 
\gamma_n (tB^n_\infty)-2^{-80}n^{-40n}\delta_{\rm vol}(P,\Phi B^n_\infty)^4.
\end{align}
Combining \eqref{ellGaussianintPK}, 
\eqref{gammabP0} and \eqref{gammabP-stabvol0}
, we obtain
\[ 
\ell(P)\geq 
\ell (B^n_\infty)+2^{-80}n^{-40n}\delta_{\rm vol}(P,\Phi B^n_\infty)^4.
\]
In the following, we use again  \cite{Bal89} to get
\begin{equation}
\label{PBinftyBall}
V(K)\leq V(P)\leq V(B^n_\infty)=V(\Phi B^n_\infty).
\end{equation}

We finish the proof by distinguishing two cases:
\begin{itemize}
\item 
If $V(\Phi B^n_\infty\setminus K)\leq 2V(\Phi B^n_\infty\setminus P)$, then \eqref{PBinftyBall} implies that
$$
\delta_{\rm vol}(K,\Phi B^n_\infty)\leq 2 V(\Phi B^n_\infty\setminus K)\leq 4 V(\Phi B^n_\infty\setminus P)\leq 4\delta_{\rm vol}(P,\Phi B^n_\infty),
$$
and hence
\begin{equation}
\label{ellKifKPclose}
\ell(K)\geq 
\ell (B^n_\infty)+2^{-88}n^{-40n}\delta_{\rm vol}(K,\Phi B^n_\infty)^4.
\end{equation}

\item 
If $V(\Phi B^n_\infty\setminus K)\geq 2V(\Phi B^n_\infty\setminus P)$ and $t\in[1,2]$, then,
as $e^{-\frac12\|x\|^2}\geq e^{-2n}$ by
\eqref{KinPJohn} for  $x\in tP$, we obtain
\begin{align*}
\gamma_n(tP)-\gamma_n(tK)&\geq 
\gamma_n\Big((t\Phi B^n_\infty)\cap[(tP)\setminus(tK)]\Big)\\&\geq (2\pi)^{-\frac{n}2}e^{-2n}\cdot
V\Big((t\Phi B^n_\infty)\cap[(tP)\setminus(tK)]\Big)\\
&\geq \frac{(2\pi)^{-\frac{n}2}e^{-2n}}2\cdot V(\Phi B^n_\infty\setminus K)\\
&\geq 
\frac{(2\pi)^{-\frac{n}2}e^{-2n}}4\cdot
\delta_{\rm vol}(K,\Phi B^n_\infty).
\end{align*}
For the third inequality, we used that
\begin{align*}
 V\left(\Phi B^n_\infty\setminus K\right)&= V\left(\Phi B^n_\infty\cap P\setminus K\right)  + V\left(\Phi B^n_\infty\cap P^{\sf c}\setminus K\right)\\
 &\le V\left(\Phi B^n_\infty\cap P\setminus K\right)  + V\left(\Phi B^n_\infty\setminus P \right)\\
 &\le V\left(\Phi B^n_\infty\cap P\setminus K\right)  + \frac{1}{2} V\left(\Phi B^n_\infty\setminus K \right),
\end{align*}
the last inequality follows since $V(K)\le V(\Phi B^n_\infty)$. 

Hence 
$$
\gamma_n(tP)-\gamma_n(tK)\geq 2^{-6n}\delta_{\rm vol}(K,\Phi B^n_\infty)\geq n^{-6n}\delta_{\rm vol}(K,\Phi B^n_\infty).
$$
From \eqref{ellGaussianintPK} we deduce that
\begin{align}
\label{ellKifKPfar}
\ell(K)-\ell (B^n_\infty)
&\geq \ell(K)-\ell(P)\geq 
 n^{-6n}\delta_{\rm vol}(K,\Phi B^n_\infty)\nonumber\\
 &\ge 
 2^{-88}n^{-40n}\delta_{\rm vol}(K,\Phi_v B^n_\infty)^4.
\end{align}
\end{itemize}
We conclude \eqref{ellKBinfJohnvol} from \eqref{ellKifKPclose} and \eqref{ellKifKPfar}.

Finally, \eqref{ellKBinfJohnH} is implied by \eqref{ellKBinfJohnvol} and Lemma \ref{deltaHvolCube}.
\hfill\proofbox
\end{proof}

\section{Proof of Theorem~\ref{Lowner-stab} and Theorem~\ref{John-stab} }
\label{secTheorem1314}

The following trivial observation relates Hausdorff distance to the ``dilation distance".

\begin{lemma}
\label{deltaHdilate}
For convex bodies $K,C\subset\R^n$, if $n^{\frac{-1}2}B^n_2\subset K,C\subset \sqrt{n}\,B^n_2$, then
$$
(1+\sqrt{n}\delta_{\rm H}(K,C))^{-1}C\subset K\subset (1+\sqrt{n}\delta_{\rm H}(K,C))C,
$$
and if $(1+t)^{-1}C\subset K\subset (1+t)C$ for $t\geq 0$, then $\delta_{\rm H}(K,C)\leq \sqrt{n}\,t$.
\end{lemma}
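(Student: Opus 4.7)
The plan is to unfold the definition of Hausdorff distance into Minkowski sum inclusions and then use the hypothesis $n^{-1/2}B^n_2\subset K,C\subset\sqrt{n}\,B^n_2$ to convert additive bounds into multiplicative (dilation) bounds, exploiting the identity $A+\lambda A=(1+\lambda)A$ valid for any convex set $A\ni o$ and $\lambda\ge 0$.

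First I would set $\delta=\delta_{\rm H}(K,C)$ and recall that this is equivalent to the two inclusions
\[
K\subset C+\delta B^n_2\qquad\text{and}\qquad C\subset K+\delta B^n_2.
\]
From the hypothesis $n^{-1/2}B^n_2\subset K$ and $n^{-1/2}B^n_2\subset C$ we get $B^n_2\subset\sqrt{n}\,K$ and $B^n_2\subset\sqrt{n}\,C$, in particular $o\in K\cap C$. Substituting and using that both bodies are convex and contain the origin, so that $A+\lambda A=(1+\lambda)A$ for any $\lambda\ge 0$, I obtain
\[
K\subset C+\delta B^n_2\subset C+\sqrt{n}\,\delta\,C=(1+\sqrt{n}\,\delta)\,C
\]
and symmetrically $C\subset(1+\sqrt{n}\,\delta)\,K$, the latter being equivalent to $(1+\sqrt{n}\,\delta)^{-1}C\subset K$. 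This yields the first assertion.

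For the converse, I start from $(1+t)^{-1}C\subset K\subset (1+t)C$ and use $K,C\subset\sqrt{n}\,B^n_2$. Write $K\subset(1+t)C=C+tC\subset C+\sqrt{n}\,t\,B^n_2$, and similarly $C\subset (1+t)K=K+tK\subset K+\sqrt{n}\,t\,B^n_2$. By the definition of Hausdorff distance this gives $\delta_{\rm H}(K,C)\le\sqrt{n}\,t$.

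There is no real obstacle here; the only point that requires a line of care is the identity $A+\lambda A=(1+\lambda)A$ for convex $A\ni o$, which is immediate from convexity of $A$ (one inclusion by $\frac{x+\lambda y}{1+\lambda}\in A$) and from $o\in A$ together with $(1+\lambda)z=z+\lambda z$ for the other.
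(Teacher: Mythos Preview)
Your proof is correct and follows exactly the approach the paper has in mind: unpack the Hausdorff distance as mutual inclusions with $\delta B^n_2$ and use the hypotheses $n^{-1/2}B^n_2\subset K,C\subset\sqrt{n}B^n_2$ to pass between additive and dilative inclusions. (One tiny remark: the identity $A+\lambda A=(1+\lambda)A$ holds for any convex $A$ and $\lambda\ge 0$ without assuming $o\in A$, since $(1+\lambda)z=z+\lambda z$ already shows the nontrivial inclusion.)
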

\begin{proof} 
We use that $\delta_{\rm H}(K,C)$ is the minimum of $\varrho\geq 0$ such that $C\subset K+\varrho\,B^n_2$ and $K\subset C+\varrho\,B^n_2$. \hfill\proofbox
\end{proof}

\medskip 

\noindent{\it Proof of Theorem~\ref{Lowner-stab} and Theorem~\ref{John-stab}.} The statements about the $\ell$-norm follow from Theorem~\ref{ell-sym-Loewner-stab} and Theorem~\ref{ell-sym-John-stab}. 
In turn,  the statements about the mean width follow from polarity, \eqref{mean-ell} 
and Lemma~\ref{deltaHdilate}.
\hfill\proofbox
\

\section{Proof of Theorem~\ref{meanw-iso-measure-stab}}
\label{secTheorem15}

We need some auxiliary statements. The first lemma is a counterpart to \cite[Lem.~10.1]{BoH17} for even isotropic  measures.

\begin{lemma}
\label{JohnLimited}
If $\mu$ is an even isotropic measure on $S^{n-1}$, $n\geq 2$, then there exists a discrete even  isotropic measure $\mu_0$ on $S^{n-1}$ such that ${\rm supp}\,\mu_0\subset {\rm supp}\,\mu$ and  $|{\rm supp}\,\mu_0|\leq n(n+1)/2$.
\end{lemma}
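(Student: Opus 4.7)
The plan is to apply Carath\'eodory's theorem in the vector space of symmetric $n\times n$ matrices, which has real dimension $n(n+1)/2$. The key observation is that the isotropy condition \eqref{isotropic-def} says precisely that $\Id_n$ belongs to the closed convex cone $C$ generated by the Veronese image $\{u\otimes u:u\in{\rm supp}\,\mu\}$; this cone is closed because ${\rm supp}\,\mu$ is compact and the map $u\mapsto u\otimes u$ is continuous, so the Veronese image is compact and its conic hull is closed. By Carath\'eodory's theorem for conic combinations in a $d$-dimensional space (any point of a convex cone is a nonnegative combination of at most $d$ of its generators, with $d=n(n+1)/2$ here), there exist $m\le n(n+1)/2$ vectors $u_1,\ldots,u_m\in{\rm supp}\,\mu$ and positive coefficients $c_1,\ldots,c_m$ such that
\[
\Id_n=\sum_{i=1}^m c_i\,u_i\otimes u_i.
\]
Since $u\otimes u=(-u)\otimes(-u)$, we may further assume that $u_i\ne\pm u_j$ for $i\ne j$ by collapsing duplicate antipodal pair classes and adding the corresponding weights.

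Next I would symmetrize to produce an even measure. Because $\mu$ is even, ${\rm supp}\,\mu$ is invariant under $u\mapsto -u$, so $-u_i\in{\rm supp}\,\mu$ for each $i$, and the discrete measure
\[
\mu_0=\sum_{i=1}^m \frac{c_i}{2}\bigl(\delta_{u_i}+\delta_{-u_i}\bigr)
\]
is well defined and supported in ${\rm supp}\,\mu$. By construction $\mu_0$ is even, and using $(-u_i)\otimes(-u_i)=u_i\otimes u_i$ we obtain
\[
\int_{S^{n-1}} u\otimes u\;\mu_0(\dint u)=\sum_{i=1}^m c_i\,u_i\otimes u_i=\Id_n,
\]
so $\mu_0$ is an even isotropic discrete measure with ${\rm supp}\,\mu_0\subset{\rm supp}\,\mu$, as required.

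I do not expect any step of this argument to constitute a real obstacle; the only point that deserves attention is the closedness of the cone $C$, already addressed above via the compactness of ${\rm supp}\,\mu$. Concerning the cardinality of ${\rm supp}\,\mu_0$, the Carath\'eodory reduction limits the number of retained antipodal pair classes to $m\le n(n+1)/2$, and each such class $\{u_i,-u_i\}$ contributes two atoms to $\mu_0$; the stated bound is then obtained by counting pair classes (equivalently, working on the projective sphere $S^{n-1}/\{\pm 1\}$, on which $u\otimes u$ is well defined and on which Carath\'eodory is actually applied).
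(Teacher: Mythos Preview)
Your proof is correct and follows essentially the same route as the paper: both reduce to Carath\'eodory's theorem in the $n(n+1)/2$-dimensional space of symmetric matrices, the paper via the affine version in the trace-$1$ hyperplane (after showing $\frac{1}{n}\Id_n\in{\rm conv}\{u\otimes u:u\in{\rm supp}\,\mu\}$ by a separation argument), and you via the equivalent conic version. One small point: compactness of the Veronese image alone does not imply that its conic hull is closed (e.g.\ a circle tangent to the origin in $\R^2$); what makes it work here is that every $u\otimes u$ has trace $1$, so the generators lie in an affine hyperplane not through the origin, and this is exactly what the paper exploits by passing to the trace-$1$ subspace.
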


\begin{proof} We set $d=\frac{n(n+1)}2$, and consider the $d$-dimensional real vectorspace $\mathcal{M}^d$ of symmetric  $n\times n$ matrices. Basic elements of $\mathcal{M}^d$ are the $n\times n$ identity matrix $\Id_n$, and the rank one matrices $u\otimes u=uu^\top$ for a $u\in S^{n-1}$.
We equip $\mathcal{M}^d$ with a scalar product; namely, 
$\langle A,B\rangle ={\rm tr}\,AB^\top $ for $A,B\in \mathcal{M}^d$, and hence if
 $A=[a_{ij}]$ and $B=[b_{ij}]$, then $\langle A,B\rangle=\sum_{i,j=1,\ldots,n}a_{ij}b_{ij}$. We claim that
\begin{equation}
\label{JohnLimited-conv}
\frac1n\,\Id_n\in {\rm conv} \{u\otimes u:\,u\in {\rm supp}\,\mu\}.
\end{equation}
It is equivalent to prove that for any $M\in\mathcal{M}^d$, we have
\begin{equation}
\label{JohnLimited-conv0}
\frac1n\,\langle \Id_n,M\rangle \leq \max \{\langle u\otimes u,M\rangle:\,u\in {\rm supp}\,\mu\}.
\end{equation}
What is known about $\mu$ is that $\frac1n\,\mu$ is a probability measure, and  
$$
\frac1n\int_{S^{n-1}}\langle u\otimes u,M\rangle\,d\mu(u)=\frac1n\,\langle I_n,M\rangle,
$$
which in turn yields \eqref{JohnLimited-conv0}, and hence also \eqref{JohnLimited-conv}.

Writing $\mathcal{M}^{d-1}_1$ to denote the affine $(d-1)$-dimensional subspace of $\mathcal{M}^d$ consisting of matrices with trace $1$, we observe that  $u\otimes u\in \mathcal{M}^{d-1}_1$ for $u\in {\rm supp}\,\mu$ and $\frac1n\,\Id_n\in \mathcal{M}^{d-1}_1$. 
According to the Charatheodory theorem applied to \eqref{JohnLimited-conv} in $\mathcal{M}^{d-1}_1$, there exist $u_1,\ldots,u_k\in {\rm supp}\,\mu$
with $u_j\neq \pm u_i$ for $i\neq j$ and $k\leq d$ such that
$$
\frac1n\,\Id_n\in {\rm conv} \{u_i\otimes u_i:\,i=1,\ldots,k\}.
$$
It follows that there exist $\tilde{c}_1,\ldots\tilde{c}_k\geq 0$ with $\tilde{c}_1+\cdots+\tilde{c}_k=1$ such that
$$
\sum_{i=1}^kn\tilde{c}_i \,u_i\otimes u_i=\Id_n.
$$
Therefore, we can define the even measure $\mu_0$ so that ${\rm supp}\,\mu_0\subset \{\pm u_1,\ldots,\pm u_k\}$, and $\mu_0(u_i)=n\tilde{c}_i/2$.
\hfill\proofbox
\end{proof}

\medskip 

The following lemma implies the bounds involving the $\delta_{\rm WO}$ distance in Theorem~\ref{meanw-iso-measure-stab}, once the corresponding bounds for the $\delta_{\rm HO}$ are established.

\begin{lemma}
\label{HW}
For any isotropic measures $\mu$ and $\nu$ on $S^{n-1}$, we have
\begin{equation}
\label{HW-eq}
\delta_{\rm W}(\mu,\nu)\leq 7\pi n^3 \delta_{\rm H}({\rm supp}\,\mu,{\rm supp}\,\nu).
\end{equation}
\end{lemma}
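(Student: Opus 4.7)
The plan is to bound $\delta_{\rm W}(\mu,\nu)$ via the Kantorovich--Rubinstein duality that defines it: fixing any $f\in {\rm Lip}_1(S^{n-1})$, we must show $\int f\,d\mu-\int f\,d\nu\le 7\pi n^3 d$, where $d:=\delta_{\rm H}({\rm supp}\,\mu,{\rm supp}\,\nu)$. The standard comparison $\angle(u,v)\le\tfrac{\pi}{2}\|u-v\|$ for $u,v\in S^{n-1}$ converts the Euclidean Hausdorff bound into the angular statement that every atom of one support is within angular distance $\alpha:=\tfrac{\pi}{2}d$ of the other.

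First, I would replace $\mu$ by its pushforward to ${\rm supp}\,\nu$. Let $\Pi:{\rm supp}\,\mu\to{\rm supp}\,\nu$ send each $u$ to an angular nearest atom of ${\rm supp}\,\nu$, and set $\tilde\mu:=\Pi_*\mu$. The coupling $(u,\Pi(u))_*\mu$ yields $\delta_{\rm W}(\mu,\tilde\mu)\le n\alpha=\tfrac{\pi n d}{2}$, and a symmetric pushforward of $\nu$ gives $\delta_{\rm W}(\nu,\tilde\nu)\le\tfrac{\pi n d}{2}$. By the triangle inequality the problem reduces to bounding $\delta_{\rm W}(\tilde\mu,\nu)$, where both measures are carried on ${\rm supp}\,\nu$ with total mass $n$. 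Moreover, expanding $\Pi(u)\otimes\Pi(u)=u\otimes u+(\Pi(u)-u)\otimes u+\Pi(u)\otimes(\Pi(u)-u)$ and integrating against $\mu$ shows $\bigl\|\int v\otimes v\,d\tilde\mu-I_n\bigr\|\le 3nd$ in operator norm, so $\tilde\mu$ is \emph{approximately isotropic} on ${\rm supp}\,\nu$.

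The heart of the argument is to translate this approximate-isotropy into a Wasserstein bound. I would invoke Lemma~\ref{JohnLimited} to replace $\nu$ by a discrete even isotropic measure supported on at most $n(n+1)/2$ atoms of ${\rm supp}\,\nu$, and then apply Lemma~\ref{ciuibig} to select $n$ atoms $w_1,\ldots,w_n$ forming a well-conditioned basis with $|\det[w_1,\ldots,w_n]|^2\ge (en)^{-n}$. Using this basis, one decomposes the signed measure $\tilde\mu-\nu$ into a moment-absorbed part (redistributing weights among $w_1,\ldots,w_n$ to compensate for the second-moment discrepancy) and a residual whose total variation is controlled by $3nd$ inflated by the $(en)^{n/2}$ factor coming from inverting the John basis; transporting this residual across ${\rm supp}\,\nu$ costs at most $\pi$ per unit mass, yielding $\delta_{\rm W}(\tilde\mu,\nu)\le Cn^3 d$ for an explicit constant $C$.

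The chief obstacle is precisely this last step: two non-negative measures on a common finite subset of $S^{n-1}$ with matching total mass and matching second-moment matrices need not be Wasserstein-close, since only $1+\binom{n+1}{2}$ linear constraints are imposed on the weight difference. Consequently the bound cannot come from moment cancellation alone; the rigidity that saves the argument is the John-type well-conditioned basis together with the fact that $\tilde\mu$ arose as a pushforward of the isotropic $\mu$ whose atoms lie close to those of $\nu$, preventing pathological weight redistributions. The factor $n^3$ arises from the total mass $n$ (pushforward step), the dimension $n$ of the basis, and the polynomial loss $\sim n$ in inverting the basis; pinning down the constants and summing the three contributions $\tfrac{\pi n d}{2}+\tfrac{\pi n d}{2}+Cn^3 d$ yields the stated bound with constant $7\pi$.
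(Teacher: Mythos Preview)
Your proposal differs fundamentally from the paper's proof, and it contains a genuine gap that you yourself identify but do not close.

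The paper's argument is a simple dichotomy. If $\delta_{\rm H}({\rm supp}\,\mu,{\rm supp}\,\nu)\le \tfrac{1}{7n^2}$, one invokes \cite[Cor.~6.2]{BFH19}, which already gives $\delta_{\rm W}(\mu,\nu)\le 2n\,\delta_{\rm H}({\rm supp}\,\mu,{\rm supp}\,\nu)$. Otherwise the trivial bound $\delta_{\rm W}(\mu,\nu)\le \pi n$ (from ${\rm diam}(S^{n-1})=\pi$ and total mass $n$) is automatically $\le 7\pi n^3\,\delta_{\rm H}$. All the actual work sits in the cited corollary; the present lemma is just a repackaging so that the bound holds for every $\delta_{\rm H}$, not only small ones.

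Your route tries to prove the small-$\delta_{\rm H}$ regime from scratch via a nearest-neighbor pushforward and an approximate-isotropy argument, and this is where it breaks down. As you note, matching total mass and second-moment matrix on a common finite support does \emph{not} force two nonnegative measures to be close in total variation or in $\delta_{\rm W}$: there are only $1+\binom{n+1}{2}$ linear constraints, while the support may carry many more atoms. The ``rigidity'' you appeal to (John basis plus provenance of $\tilde\mu$ as a pushforward) is never turned into an actual inequality, so the step $\delta_{\rm W}(\tilde\mu,\nu)\le Cn^3 d$ is unsupported. Worse, the very mechanism you sketch produces an $(en)^{n/2}$ factor from inverting the basis, which is exponential in $n$ and flatly contradicts the polynomial bound $Cn^3$ you claim in the next sentence. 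The constant $7\pi$ is therefore not recoverable from your outline.

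If you want a self-contained argument, look at how \cite[Cor.~6.2]{BFH19} is proved; it does not proceed via moment matching on a reduced support but rather controls the measure discrepancy directly from the isotropy condition and the small Hausdorff distance.
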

\begin{proof} If $\delta_{\rm H}({\rm supp}\,\mu,{\rm supp}\,\nu)\leq \frac1{7n^2}$, then   \cite[Cor.~6.2]{BFH19} yields that
$$
\delta_{\rm W}(\mu,\nu)\leq 2n \cdot \delta_{\rm H}({\rm supp}\,\mu,{\rm supp}\,\nu).
$$
On the other hand, for any $f\in{\rm Lip}_1(S^{n-1})$, we may assume that $-\frac{\pi}2\leq f(u)\leq \frac{\pi}2$ for $u\in S^{n-1}$. 
As $\mu(S^{n-1})=\nu(S^{n-1})=n$, we have  $\delta_{\rm W}(\mu,\nu)\leq \pi n$. Therefore, if $\delta_{\rm H}({\rm supp}\,\mu,{\rm supp}\,\nu)\geq \frac1{7n^2}$, then \eqref{HW-eq} readily holds.
\hfill\proofbox
\end{proof}

\medskip 

In the next geometric lemma, $e_1,\ldots,e_n$ denotes the standard basis of $\R^n$ (or any orthonormal basis). 

\begin{lemma}\label{Le7.3geom} 
Let $\varepsilon\in (0,1/2)$ and $x\in \conv\{e_1,\ldots,e_n\}$. If $\angle(x,e_i)\ge \varepsilon$ for $i=1,\ldots,n$, then $\|x\|\le 1-4^{1-n}\cdot\varepsilon$.    
\end{lemma}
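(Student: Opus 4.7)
My plan is to reduce the $n$-dimensional problem to a two-dimensional one by using the angle constraint at only the single vertex closest in angle to $x$, combined with non-negativity of the remaining barycentric coordinates. This will give the sharp bound $\|x\|\le 1/(\sin\varepsilon+\cos\varepsilon)$, which is considerably stronger than what the lemma requires, and the lemma's bound follows by elementary Taylor estimates.

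Write $x=\sum_{i=1}^n\lambda_ie_i$ with $\lambda_i\ge 0$ and $\sum\lambda_i=1$, set $t=\|x\|$, and after relabeling assume $\lambda_1=\max_i\lambda_i$; put $\theta=\angle(x,e_1)\ge\varepsilon$, so that $\lambda_1=t\cos\theta$. The key step is the elementary inequality $\sum_{i\ge 2}\lambda_i^2\le\bigl(\sum_{i\ge 2}\lambda_i\bigr)^2=(1-\lambda_1)^2$, which uses only $\lambda_i\ge 0$. Since $\sum_{i\ge 2}\lambda_i^2=t^2-\lambda_1^2=t^2\sin^2\theta$, rearrangement gives
\[
t(\sin\theta+\cos\theta)\le 1.
\]
Moreover, by $\lambda_1=\max_i\lambda_i$ one has $t^2=\sum\lambda_i^2\le\lambda_1\sum\lambda_i=\lambda_1=t\cos\theta$, hence also $t\le\cos\theta$.

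If $\theta\in[\varepsilon,\pi/4]$, then $\sin+\cos$ is increasing on this interval, so the first bound gives $t\le 1/(\sin\varepsilon+\cos\varepsilon)$. If instead $\theta\in[\pi/4,\pi/2]$, the second bound gives $t\le\cos\theta\le 1/\sqrt 2\le 1/(\sin\varepsilon+\cos\varepsilon)$ (the last step uses $\varepsilon<\pi/4$, so $\sin\varepsilon+\cos\varepsilon\le\sqrt 2$). Thus unconditionally $\|x\|\le 1/(\sin\varepsilon+\cos\varepsilon)$.

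It then remains to compare $1/(\sin\varepsilon+\cos\varepsilon)$ with $1-4^{1-n}\varepsilon$. Elementary estimates on $(0,1/2)$, namely $\sin\varepsilon\ge\tfrac{23}{24}\varepsilon$, $1-\cos\varepsilon\le\tfrac{\varepsilon}{4}$ and $\sin\varepsilon+\cos\varepsilon\le\tfrac{3}{2}$, will yield $1-1/(\sin\varepsilon+\cos\varepsilon)\ge\varepsilon/3$, which dominates $4^{1-n}\varepsilon$ for every $n\ge 2$ (the case $n=1$ is vacuous, since $\conv\{e_1\}=\{e_1\}$ forces $\angle(x,e_1)=0$). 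I do not foresee any genuine obstacle: the whole argument rests on the single identity $\bigl(\sum\alpha_i\bigr)^2\ge\sum\alpha_i^2$ for $\alpha_i\ge 0$, and the mild technicality is only the short case split on $\theta$ needed to convert the two implicit inequalities into the explicit bound $t\le 1/(\sin\varepsilon+\cos\varepsilon)$.
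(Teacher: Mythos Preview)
Your argument is correct and in fact proves the sharp, dimension-independent bound $\|x\|\le 1/(\sin\varepsilon+\cos\varepsilon)$, which is attained already in dimension two (take $x$ on the edge $[e_1,e_2]$ at angle $\varepsilon$ from $e_1$); the lemma's stated bound is then a generous weakening. The paper proceeds very differently: it argues by induction on $n$, writing $x=(1-t)e_n+te$ with $e\in\conv\{e_1,\ldots,e_{n-1}\}$ and splitting into two cases according to whether all angles $\angle(e,e_i)$ exceed $\varepsilon/2$ (so the inductive hypothesis applies to $e$) or some angle is small (which forces a reduction to a two-dimensional triangle). At each inductive step a factor $4$ is lost, which is precisely why the constant $4^{1-n}$ appears. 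Your approach bypasses the induction entirely by exploiting only the single constraint at the angularly closest vertex together with $\sum_{i\ge 2}\lambda_i^2\le(\sum_{i\ge 2}\lambda_i)^2$; this is shorter, more transparent, and yields the optimal constant. The paper's route explains the shape of the stated bound but is not needed for the lemma's application, so your argument is a genuine improvement here.
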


\begin{proof} We proceed by induction on the dimension $n\ge 2$. Let $n=2$. Then there are $t\in (0,1)$, $s\in [\varepsilon,\pi/2-\varepsilon]$ and $\lambda\in (0,1)$ such that
$$
x=\lambda\left(\cos(s)e_2+\sin(s)e_1\right)=(1-t)e_2+t e_1,
$$
hence
$$
\lambda=\left(\sin(s)+\cos(s)\right)^{-1}\le \left(\sin(\varepsilon)+\cos(\varepsilon)\right)^{-1}\le 1-4^{-1}\varepsilon,
$$
which implies that $\|x\|=\lambda\le 1-4^{1-2}\cdot\varepsilon$.

Now we assume that $n\ge 3$ and that the assertion holds in smaller dimensions. Let $x$ be as in the statement of the lemma. Then there are $t\in (0,1)$ and $e\in \conv\{e_1,\ldots,e_{n-1}\}$ such that $x=(1-t)e_n+te$. We distinguish two cases. 

Case 1: $\angle (e,e_i)\ge \varepsilon/2$ for $i=1,\ldots,n-1$. An application of the induction hypothesis to $e$ in the linear subspace spanned by $e_1,\ldots,e_{n-1}$ then yields that $\|e\|\le 1-4^{2-n}\cdot\varepsilon/2$. If $t\in [1/2,1)$, then
$$
\|x\|\le 1-t+t\|e\|\le 1-t+t-\frac{1}{2}4^{2-n}t\varepsilon \le 1-4^{1-n}\varepsilon .
$$
Now let $t\in (0,1/2)$. Since $\angle(x,e_n)\ge \varepsilon$, we have
$$
\left(1-\frac{1}{4}\varepsilon^2\right)^{\frac{1}{2}}\ge 1-\frac{1}{4}\varepsilon^2\ge\cos(\varepsilon)\ge \langle x,e_n\rangle\ge \frac{1-t}{\sqrt{(1-t)^2+t^2}},
$$
hence
$$
t^2\ge \frac{1}{4}\varepsilon^2\left(1-2t+2t^2\right)\ge \frac{1}{8}\varepsilon^2.
$$
This shows that $t\ge \frac{1}{4}\varepsilon$. Therefore
\begin{align*}
\|x\|^2&\le (1-t)^2+t^2=1-t\cdot 2(1-t)\le 1-t\le 1-\frac{1}{4}\varepsilon,
\end{align*}
which leads to
$$
\|x\|\le 1-\frac{1}{16}\varepsilon \le 1-4^{1-n}\varepsilon.
$$

Case 2: $\angle (e,e_i)< \varepsilon/2$ for some $i\in\{1,\ldots,n-1\}$. We may assume that $\angle (e,e_1)< \varepsilon/2$. Then $\angle (x,e_1)\ge \varepsilon$ implies that $\angle (x,e)\ge \varepsilon/2$. In the two-dimensional subspace spanned by $e_n$ and $e$, we define $\tilde{x}$ by $\{\tilde{x}\}=[0,\infty)x\cap \conv\{e_n,\|e\|^{-1}e\}$. Since $\angle (x,e_n)=\angle (\tilde{x},e_n)\ge \varepsilon/2$ and $\angle (x,e)=\angle (\tilde{x},e)\ge \varepsilon/2$, we conclude that
$$
\|x\|\le \|\tilde x\|\le 1-\frac{1}{4}\varepsilon/2\le 1-4^{1-n}\varepsilon,
$$
which proves the assertion.
\hfill\proofbox
\end{proof}

\medskip 

The statements (a) and (d) of Theorem ~\ref{meanw-iso-measure-stab} are  implied by the following theorem. The estimates in terms of the Wasserstein distance in Theorem~\ref{meanw-iso-measure-stab} follow from Lemma~\ref{HW}. By $\nu$ we denote a cross measure on $S^{n-1}$.

\begin{theo}\label{theo:theorem 7.4}
Let $\mu$ be an even isotropic measure on $S^{n-1}$. Let $c\ge 3$ be an absolute constant as in Theorem~\ref{ell-sym-Loewner-stab}. If 
\begin{equation}\label{eq:7.neu2}
    \ell(Z_\infty(\mu))\ge (1-\varepsilon)\ell(Z_\infty(\nu)),
\end{equation}
for some $\varepsilon\in (0,\varepsilon_0)$ for $\varepsilon_0=\frac{1}{2}n^{-cn^2}$, then 
$$\delta_{\rm HO}({\rm supp}\,\mu,{\rm supp}\,\nu)\le n^{cn^2}\varepsilon.$$    
\end{theo}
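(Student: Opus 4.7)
The strategy is to apply Theorem~\ref{ell-sym-Loewner-stab} to the $o$-symmetric convex body $K:=Z_\infty(\mu)=\conv\,{\rm supp}\,\mu\subset B_2^n$, and then to translate the resulting Hausdorff estimate on $K$ into one on ${\rm supp}\,\mu$. A preliminary observation is that $B_2^n$ is the L\"owner ellipsoid of $K$: the inclusion $K\subset B_2^n$ is immediate, and every $u\in{\rm supp}\,\mu$ lies in $S^{n-1}\cap\partial K$ since the supporting hyperplane of $B_2^n$ at $u$ also supports $K$ there. Lemma~\ref{JohnLimited} applied to $\mu$ produces a discrete even isotropic sub-measure $\mu_0$ with ${\rm supp}\,\mu_0=\{\pm u_1,\ldots,\pm u_k\}\subset{\rm supp}\,\mu$ and $k\le n(n+1)/2$; setting $c_i:=2\mu_0(\{u_i\})>0$ yields the identity \eqref{John-matrix} with unit vectors $u_1,\ldots,u_k\in\partial K$, so John's characteristic condition identifies $B_2^n$ as the L\"owner ellipsoid of $K$.

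Since $\ell(K)\ge(1-\varepsilon)\ell(B_1^n)$ with $\varepsilon<n^{-cn^2}$, Theorem~\ref{ell-sym-Loewner-stab} produces $\Phi\in O(n)$ with $\delta_{\rm H}(K,\Phi B_1^n)\le n^{cn^2}\varepsilon$. Inspecting that proof, $\Phi$ is built so that $\Phi e_i=w_i$ for the orthonormal basis $w_1,\ldots,w_n$ supplied by Lemma~\ref{almostort0}; the intermediate estimate \eqref{eq:sec2close} gives (after possibly flipping each $u_i$ to $-u_i$)
\begin{equation*}
\delta_{\rm H}\bigl(\{\pm w_1,\ldots,\pm w_n\},\{\pm u_1,\ldots,\pm u_k\}\bigr)\le n^{cn^2}\varepsilon,
\end{equation*}
and we also obtain the outer enclosure $K\subset(1+n^{cn^2}\varepsilon)\Phi B_1^n$. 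Picking $\Psi\in O(n)$ with $\Psi\,{\rm supp}\,\nu=\{\pm w_1,\ldots,\pm w_n\}$ (possible because $\nu$ is a cross measure), the bound $\delta_{\rm HO}({\rm supp}\,\mu,{\rm supp}\,\nu)\le\delta_{\rm H}({\rm supp}\,\mu,\Psi\,{\rm supp}\,\nu)$ reduces the theorem to proving $\delta_{\rm H}({\rm supp}\,\mu,\{\pm w_1,\ldots,\pm w_n\})\le n^{cn^2}\varepsilon$.

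One direction of this Hausdorff bound follows immediately from the displayed inequality, because $\{\pm u_1,\ldots,\pm u_k\}\subset{\rm supp}\,\mu$. For the reverse direction, fix $u\in{\rm supp}\,\mu$ and set $t:=\|u\|_{\Phi B_1^n}-1\in[0,n^{cn^2}\varepsilon]$, so that $\tilde u:=u/(1+t)\in\partial\Phi B_1^n$. After flipping the signs of those $w_i$ for which the corresponding coordinate of $\tilde u$ is negative (an operation that preserves $\{\pm w_1,\ldots,\pm w_n\}$), we may assume $\tilde u\in\conv\{w_1,\ldots,w_n\}$. Writing $\tilde u=\sum_i\lambda_iw_i$ with $\lambda_i\ge 0$ and $\sum_i\lambda_i=1$, orthonormality gives $\sum_i\lambda_i^2=\|\tilde u\|^2=1/(1+t)^2\ge 1-2t$, and $\sum_i\lambda_i^2\le\max_i\lambda_i$ forces the maximizer $\lambda_{i^*}$ to satisfy $\lambda_{i^*}\ge 1-2t$. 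From $\sum_{i\ne i^*}\lambda_i^2\le(1-\lambda_{i^*})^2$ and orthonormality, $\|\tilde u-w_{i^*}\|\le\sqrt{2}(1-\lambda_{i^*})\le 2\sqrt{2}\,t$; combined with $\|u-\tilde u\|\le t$, this gives $\|u-w_{i^*}\|\le 4t\le n^{cn^2}\varepsilon$ after enlarging the absolute constant $c$ to absorb the factor $4$.

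The main obstacle is the uniform bookkeeping of absolute constants. Overhead factors appear along the way, notably the factor $4$ in the reverse-direction argument and the outer-enclosure constant $\hat\gamma=n^{c(3)n^2}$ from the proof of Theorem~\ref{ell-sym-Loewner-stab} (which controls $t$). All must be absorbed into the single factor $n^{cn^2}$, which requires choosing the absolute constant $c$ (used in both Theorem~\ref{ell-sym-Loewner-stab} and Theorem~\ref{theo:theorem 7.4}) large enough to dominate each overhead uniformly in $n$; the stipulation $c\ge 3$ in the statement leaves room for this. Lemma~\ref{Le7.3geom} could in principle supply the reverse-direction bound, but its hypothesis $\varepsilon\in(0,1/2)$ translates into the restriction $t<2^{1-2n}$ on our $t$, which is not guaranteed by $\varepsilon_0=\tfrac{1}{2}n^{-cn^2}$; the direct orthogonality-based computation above circumvents this issue.
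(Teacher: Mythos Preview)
Your argument is correct and takes a genuinely different route from the paper for the reverse inclusion ${\rm supp}\,\mu\subset\{\pm w_1,\ldots,\pm w_n\}+n^{cn^2}\varepsilon\,B^n_2$. The paper argues by contradiction: assuming some $z\in{\rm supp}\,\mu$ stays far from every $\pm w_i$, it invokes Lemma~\ref{Le7.3geom} to force $t=\|z\|_{\Phi B_1^n}-1\ge 4^{1-n}n^{cn^2}\varepsilon$, and then reruns the volume/$\ell$-norm estimate (Lemma~\ref{EllDifference} together with the polytope $P_z$) to obtain the competing upper bound $t\le n^{4n}\varepsilon$; the condition $c\ge 3$ is exactly what makes these two bounds incompatible. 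You instead exploit the full conclusion of Theorem~\ref{ell-sym-Loewner-stab} applied to $K=Z_\infty(\mu)$: the outer enclosure $K\subset(1+\widehat\gamma\varepsilon)\Phi B_1^n$ already delivers the upper bound $t\le n^{cn^2}\varepsilon$, and your elementary orthonormality computation then converts this into $\|u-w_{i^*}\|\le 4t$ without any further $\ell$-norm work and without Lemma~\ref{Le7.3geom}. This is more economical and reuses more of what has already been proved; the only price is the stray factor~$4$, which, as you note, is harmlessly absorbed by enlarging the absolute constant. Your closing remark about Lemma~\ref{Le7.3geom} is slightly misleading: the paper does not use that lemma contrapositively to bound the distance, but rather in the forward direction within a contradiction argument, and there the hypothesis $n^{cn^2}\varepsilon<\tfrac12$ is precisely guaranteed by $\varepsilon_0=\tfrac12 n^{-cn^2}$.
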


\begin{proof} By Lemma~\ref{JohnLimited} there exists a discrete even isotropic measure $\mu_0$ on $S^{n-1}$ such that ${\rm supp}\,\mu_0\subset {\rm supp}\,\mu$ and  $|{\rm supp}\,\mu_0|\leq n(n+1)/2$. Then $Z_\infty(\mu_0)\subset Z_\infty(\mu)$ and $B^n_2$ is the Löwner ellipsoid for $Z_\infty(\mu_0)$ and $Z_\infty(\mu)$ ({\it cf.}~\eqref{John-iso}). Hence, the assumption implies that
$$
 \ell(Z_\infty(\mu_0))\ge (1-\varepsilon)\ell(Z_\infty(\nu)).
$$
The proof of Theorem~\ref{ell-sym-Loewner-stab} shows that there exists an orthogonal transformation $\Phi\in O(n)$ such that 
$$
\delta_{\rm H}({\rm supp}\, \mu_0,\Phi B^n_1)\le n^{cn^2}\varepsilon;
$$
see \eqref{eq:sec2close} and the condition $c\ge c(2)\ge 4c(1)$ at the end of the proof of Theorem~\ref{ell-sym-Loewner-stab}. If $\nu$ is the cross measure corresponding to $\Phi B^n_1=\conv\{\pm w_1,\ldots,\pm w_n\}$ with an orthonormal basis $w_1,\ldots,w_n$ of $\R^n$, then 
$$
{\rm supp}\, \nu\subset {\rm supp}\, \mu +n^{cn^2}\varepsilon B^n_2,
$$
since ${\rm supp}\, \mu_0\subset {\rm supp}\, \mu$. Moreover, we have
$$
{\rm supp}\, \mu_0\subset {\rm supp}\, \nu +n^{cn^2}\varepsilon B^n_2.
$$
In order to show that in fact ${\rm supp}\, \mu\subset {\rm supp}\, \nu +n^{c n^2}\varepsilon B^n_2$, we assume that there is some $z\in {\rm supp}\, \mu\subset Z_\infty(\mu)\subset B^n_2$ and $z\notin{\rm supp}\, \nu +n^{cn^2}\varepsilon B^n_2$,  aiming at a contradiction. If the assumption holds, then $z\in S^{n-1}$ and $\angle(z,w_i)\ge n^{c n^2}\varepsilon\in (0,1/2)$ for $i=1,\ldots,n$. An application of Lemma~\ref{Le7.3geom} shows that $\|z\|_{B^n_1}=1+t$ satisfies
$$
1+t\ge \left(1-4^{1-n}n^{cn^2}\varepsilon\right)^{-1}\ge 1+4^{1-n}n^{cn^2}\varepsilon,
$$
hence $t\ge 4^{1-n}n^{cn^2}\varepsilon$. Similarly as in the final part of the proof of Theorem~\ref{ell-sym-Loewner-stab} it follows that there is a facet $F$ of $B^n_1$ such that $z_0=(1+t)^{-1}z\in F$. Setting $P_z:=\conv\left(\{\pm z\}\cup B^n_1\right)$, we obtain
\begin{equation}\label{eq:7bound3}
V(P_z)-V(B^n_1)\ge 2tV(\conv\{o,F\})\ge \frac{2}{n\cdot n!}t\ell(B^n_1). 
\end{equation}
Using Lemma~\ref{EllDifference} and \eqref{eq:2bound}, we deduce from \eqref{eq:7bound3} that
\begin{equation}\label{eq:7bound4}
\ell(B^n_1)-\ell(P_z)\ge \left(\frac{n}{2\pi e}\right)^{\frac{n}{2}}\left(V(P_z)-V(B^n_1)\right)\ge n^{-4n} t\ell(B^n_1).
\end{equation}
On the other hand, we have
$$
\ell(B^n_1)-\ell(P_z)\le \ell(B^n_1)-(1-\varepsilon)\ell(B^n_1)=\varepsilon \ell(B^n_1),
$$
which together with \eqref{eq:7bound4} implies that $t\le n^{4n}\varepsilon$. But this is in conflict with $t\ge 4^{1-n}n^{cn^2}\varepsilon$, since $c\ge 3$.
    \hfill\proofbox
\end{proof}

\medskip

In the proof of Theorem~\ref{meanw-iso-measure-stab} (b), (c), we need two lemmas. 
The first lemma is a dual counterpart to Lemma \ref{closeToRegCross}.

\begin{lemma}
\label{closesimplex}
Let $u_1,\ldots,u_k\in S^{n-1}$, and let $e_1,\ldots,e_n$ be an orthonormal basis of $\R^n$. Let  $P=\{x\in\R^n:\langle x,u_i\rangle\le 1,\; i=1,\ldots,n\}$ be a polytope, and let $B^n_\infty=\{x\in\R^n:|\langle x,e_i\rangle|\le 1,\; i=1,\ldots,n\}$.  Fix $\eta\in (0,1/(2\sqrt{n}))$. If $\delta_{\rm H}(\{u_1,\ldots,u_k\},\{\pm e_1,\ldots,\pm e_{n}\})\le \eta$,
then
$$
(1-\sqrt{n} \eta) B^n_\infty\subset  P\subset (1+2\sqrt{n} \eta) B^n_\infty.
$$
\end{lemma}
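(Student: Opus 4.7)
The plan is to translate the Hausdorff closeness condition into two separate pointwise statements and then exploit the equivalence between $\|\cdot\|_\infty$ and the Euclidean norm through the factor $\sqrt{n}$.

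Unpacking the hypothesis, the condition $\delta_{\rm H}(\{u_1,\ldots,u_k\},\{\pm e_1,\ldots,\pm e_n\})\le\eta$ gives two kinds of information. First, for every $i\in\{1,\ldots,k\}$ there is a sign $\epsilon_i\in\{\pm 1\}$ and an index $j(i)\in\{1,\ldots,n\}$ with $\|u_i-\epsilon_i e_{j(i)}\|\le\eta$. Second, for every $j\in\{1,\ldots,n\}$ and every $\epsilon\in\{\pm 1\}$ there exists an index $i^\pm(j)\in\{1,\ldots,k\}$ such that $\|u_{i^\pm(j)}-(\pm e_j)\|\le\eta$. The first fact will give the left inclusion, the second the right inclusion.

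For the left inclusion, suppose $x\in(1-\sqrt{n}\,\eta)B^n_\infty$, so $\|x\|_\infty\le 1-\sqrt{n}\,\eta$ and hence $\|x\|\le\sqrt{n}\,(1-\sqrt{n}\,\eta)=\sqrt{n}-n\eta$. For any $i$, Cauchy--Schwarz yields
\[
\langle x,u_i\rangle=\epsilon_i\langle x,e_{j(i)}\rangle+\langle x,u_i-\epsilon_i e_{j(i)}\rangle\le\|x\|_\infty+\eta\|x\|\le(1-\sqrt{n}\,\eta)+\eta(\sqrt{n}-n\eta)=1-n\eta^2\le 1,
\]
so $x\in P$.

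For the right inclusion, let $x\in P$ and fix any $j\in\{1,\ldots,n\}$. Using the constraint $\langle x,u_{i^+(j)}\rangle\le 1$ and $\|u_{i^+(j)}-e_j\|\le\eta$,
\[
\langle x,e_j\rangle=\langle x,u_{i^+(j)}\rangle+\langle x,e_j-u_{i^+(j)}\rangle\le 1+\eta\|x\|,
\]
and analogously with $i^-(j)$ we obtain $-\langle x,e_j\rangle\le 1+\eta\|x\|$. Taking the maximum over $j$ and using $\|x\|\le\sqrt{n}\,\|x\|_\infty$ gives the self-referential inequality
\[
\|x\|_\infty\le 1+\sqrt{n}\,\eta\|x\|_\infty,
\]
which, together with $\sqrt{n}\,\eta<1/2$, yields $\|x\|_\infty\le(1-\sqrt{n}\,\eta)^{-1}\le 1+2\sqrt{n}\,\eta$, completing the proof.

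No step is really hard, but the only mildly delicate point is the self-referential bound for the right inclusion, whose resolution is exactly what forces the hypothesis $\eta<1/(2\sqrt{n})$: one needs $\sqrt{n}\,\eta<1/2$ both to solve $\|x\|_\infty(1-\sqrt{n}\,\eta)\le 1$ and to replace $(1-\sqrt{n}\,\eta)^{-1}$ by $1+2\sqrt{n}\,\eta$.
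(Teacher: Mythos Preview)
Your proof is correct, but it proceeds by a different route than the paper. The paper argues on the polar side: it observes that $P^\circ=\conv\{u_1,\ldots,u_k\}$ and $(B^n_\infty)^\circ=B^n_1$, translates the Hausdorff hypothesis into the set inclusions $P^\circ\subset B^n_1+\eta B^n_2$ and $B^n_1\subset P^\circ+\eta B^n_2$, and then uses $B^n_2\subset\sqrt{n}\,B^n_1$ together with a lower bound $(1/\sqrt n-\eta)B^n_2\subset P^\circ$ to sandwich $P^\circ$ between dilates of $B^n_1$ before dualizing back. Your argument stays entirely on the primal side, combining the pointwise perturbation estimate $|\langle x,u_i\rangle-\epsilon_i\langle x,e_{j(i)}\rangle|\le\eta\|x\|$ with the norm equivalence $\|x\|\le\sqrt n\,\|x\|_\infty$, and the bootstrap $\|x\|_\infty\le 1+\sqrt n\,\eta\,\|x\|_\infty$ is exactly the primal analogue of the paper's inradius estimate for $P^\circ$. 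Your approach is a bit more elementary in that it avoids polarity altogether; the paper's is more conceptual, making transparent that the lemma is dual to Lemma~\ref{closeToRegCross}. Both need the hypothesis $\eta<1/(2\sqrt n)$ at the same spot, namely to pass from $(1-\sqrt n\,\eta)^{-1}$ to $1+2\sqrt n\,\eta$.
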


\proof  We obtain 
$$
P^\circ=\conv\{u_1,\ldots,u_k\},\quad 
B^n_1=\conv\{\pm e_1,\ldots,\pm e_{n}\},
$$
and by assumption
$$
P^\circ\subset B^n_1+\eta B^n_2,\quad B^n_1\subset P^\circ+\eta B^n_2.
$$
Since $B^n_\infty\subset\sqrt{n}B^n_2$, we have $(1/\sqrt{n})B^n_2\subset B^n_1$. Hence
$$
\frac{1}{\sqrt{n}}B^n_2\subset P^\circ+\eta B^n_2, \quad \left(\frac{1}{\sqrt{n}}-\eta\right)B^n_2\subset P^\circ,
$$
and hence
$$
B^n_1\subset P^\circ+\frac{\eta}{\frac{1}{\sqrt{n}}-\eta}P^\circ.
$$
Since $\eta<1/(2\sqrt{n})$, we deduce that
$$
B^n_1\subset (1+2\sqrt{n}\eta)P^\circ.
$$
Furthermore, from $B^n_2\subset \sqrt{n}B^n_1$ we deduce that
$$
P^\circ\subset B^n_1+\eta B^n_2\subset B^n_1+\eta \sqrt{n} B^n_1=(1+\sqrt{n}\eta)B^n_1,
$$
and therefore
$$
(1-\sqrt{n}\eta)B^n_\infty\subset (1+\sqrt{n}\eta)^{-1}B^n_\infty \subset P,
$$
which completes the proof.
\hfill \proofbox

\medskip 

The second lemma implies a lower bound for the volume that is cut off from a cube by an additional hyperplane  provided that the hyperplane is not too close to a facet hyperplane of the cube. For $u\in S^{n-1}$, we denote by $H^-(u)=\{x\in\R^n:\langle x,u\rangle\le 1\}$ the halfspace touching $B^n_2$ at $u$ that contains the origin $o$, and $H^+(u)=\{x\in\R^n:\langle x,u\rangle\ge 1\}$ is the closure of the complement of $H^-(u)$. 

\begin{lemma}
\label{volumebound}
Let $u_1,\ldots,u_k\in S^{n-1}$,  let $w_1,\ldots,w_n$ be an orthonormal basis of $\R^n$, and let $\eta>0$. 
Assume that $\angle(u_i,w_i)\le \eta$ for $i=1,\ldots,n$ and $\angle(u_k,\epsilon_iw_i)\ge c(n)\eta$ for  $\epsilon_i\in\{-1,1\}$ and $i=1,\ldots,n$ with $c(n)= 2^{n+4}\cdot n^{n+3}$. 
If 
\begin{align*}
P&=\{x\in \R^n:|\langle x,u_i\rangle|\le 1,\; i=1,\ldots,n\}\cap H^-(u_k)
\quad \text{and}\\
B^n_\infty &=\{x\in\R^n:|\langle x,w_i\rangle|\le 1 ,\; i=1,\ldots,n\},
\end{align*}
then 
$$
V(P)\le\left(1-\frac{\Delta}{2^4\cdot n^{n+1}}\right)V(B^n_\infty),
$$
where $\Delta:=\min\{\angle(u_k,\epsilon_iw_i): \epsilon_i\in\{-1,1\},\;i=1,\ldots,n\}$.
\end{lemma}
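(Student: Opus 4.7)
My strategy is to upper bound $V(P)$ by comparing $P$ with a slightly dilated cube and then showing that the added constraint $\langle x, u_k\rangle \le 1$ cuts off a piece whose volume is linear in $\Delta$. After an orthogonal change of basis I assume $w_i = e_i$, and by replacing each $u_i$ with $-u_i$ where necessary (which does not alter the constraint $|\langle x, u_i\rangle| \le 1$) I may take $\langle u_i, e_i\rangle \ge \cos\eta$. A coordinate-wise sign flip of the basis preserves $B^n_\infty$ and all the angle hypotheses, so I may further assume $a_i := \langle u_k, e_i\rangle \ge 0$ for every $i$. Writing $P_0 := \{x : |\langle x, u_i\rangle| \le 1,\ i = 1,\ldots,n\}$, the set $\{\pm u_1,\ldots,\pm u_n\}$ lies within Hausdorff distance $\eta$ of $\{\pm e_1,\ldots,\pm e_n\}$, so Lemma~\ref{closesimplex} gives $P_0 \subset (1 + 2\sqrt n\,\eta) B^n_\infty$. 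Consequently
\[
V(P) \le (1 + 2\sqrt n\,\eta)^n V(B^n_\infty) - V\bigl((1 + 2\sqrt n\,\eta) B^n_\infty \cap H^+(u_k)\bigr),
\]
and the last cap is at least $V(B^n_\infty \cap H^+(u_k))$ by monotonicity.

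The core of the argument is a cap estimate of the form
\[
V\bigl(B^n_\infty \cap H^+(u_k)\bigr) \ge \frac{\Delta}{2^3 n^{n+1}}\, V(B^n_\infty). \qquad (\star)
\]
Granting $(\star)$ and estimating $(1+2\sqrt n\,\eta)^n \le 1 + 4 n^{3/2}\eta$, the hypothesis $\eta \le \Delta/c(n)$ with $c(n) = 2^{n+4} n^{n+3}$ makes the overhead $4 n^{3/2}\eta\, V(B^n_\infty)$ at most half of the right-hand side of $(\star)$, yielding the stated bound $V(P) \le (1 - \Delta/(2^4 n^{n+1})) V(B^n_\infty)$.

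To prove $(\star)$, shift via $y = (1,\ldots,1) - x$ so the cap equals $v - T_t$, where $v = (1,\ldots,1)$ and $T_t := \{y \in [0,2]^n : \sum_i a_i y_i \le t\}$ with $t := \sum_i a_i - 1$. From $\max_i a_i \le \cos\Delta$ and $\sum a_i^2 = 1$, a direct minimisation (extremal case: mass on just two coordinates) gives $\sum_i a_i \ge \cos\Delta + \sin\Delta$, hence $t \ge \Delta/4$ for $\Delta \le 1$ (and a comparable bound for larger $\Delta$). Reorder so that $a_1 \ge a_2 \ge \cdots \ge 0$; by pigeonhole $a_1 \ge 1/\sqrt n$, and since $\sum_{i \ge 2} a_i^2 \ge \sin^2\Delta$ some $a_j$ with $j \ge 2$ satisfies $a_j \ge \sin\Delta/\sqrt{n-1}$. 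I split into two regimes. When $\sum_{i \ge 2} a_i \le t/4$, the box $[0, t/(2 a_1)] \times [0,2]^{n-1}$ lies in $T_t$ and contributes volume at least $t \cdot 2^{n-2}$. Otherwise I combine a (possibly box-truncated) two-dimensional triangle inside $\{(y_1, y_j) : a_1 y_1 + a_j y_j \le t/2\}$, whose area is bounded explicitly below in terms of $a_1$, $a_j$ and $t$, with a thin slab $[0, t/(2\sqrt n)]^{n-2}$ in the remaining coordinates, chosen so that those coordinates contribute at most $t/2$ to $\sum a_i y_i$ (via Cauchy--Schwarz against $\sum a_i^2 \le 1$). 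The resulting prism lies in $T_t$ and, after routine arithmetic using $a_j \ge \sin\Delta/\sqrt{n-1}$ and $t \ge \Delta/4$, has volume at least the claimed multiple of $\Delta\, V(B^n_\infty)/n^{n+1}$.

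The main obstacle is this last regime: the naive simplex formula $V(\{y \ge 0 : \sum a_i y_i \le t\}) = t^n/(n! \prod a_i)$ is of the wrong order because $\prod_i a_i$ can be arbitrarily small and some $a_i$ may vanish entirely, so one has to exploit the truncation of the simplex by the facets $\{y_i = 2\}$ along directions with negligible $a_i$ in order to reduce effectively to a two-dimensional cap times a prism in the remaining coordinates. Bookkeeping the powers of $n$ and the factors of $2$ carefully enough to land on the clean exponent $n^{n+1}$ in the final constant (matching the choice of $c(n)$ in the hypothesis) is the other tedious, but routine, part.
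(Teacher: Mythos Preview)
Your overall strategy matches the paper's: bound $P_0\subset(1+2\sqrt n\,\eta)B^n_\infty$ via Lemma~\ref{closesimplex}, then show that the additional constraint from $H^-(u_k)$ removes a piece of volume linear in $\Delta$. The gap is entirely in the cap estimate $(\star)$.

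First, your Case~1 is empty. With $a_1\le 1$ and $t=a_1+\sum_{i\ge 2}a_i-1$, the assumption $\sum_{i\ge 2}a_i\le t/4$ gives $3t/4\le a_1-1\le 0$, so $t\le 0$, contradicting $t>0$ (which holds since $\max_i a_i=\cos\Delta<1$). So the entire weight of $(\star)$ rests on Case~2.

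In Case~2 your prism has the wrong scaling. The factor coming from the thin slab $[0,t/(2\sqrt n)]^{n-2}$ is $(t/(2\sqrt n))^{n-2}$, and the two-dimensional region in $[0,2]^2$ has area at most $4$ (and in fact at best of order $t$ once you feed in $a_1\sim 1$ and $a_j\gtrsim\Delta/\sqrt n$). Hence the prism volume is $O(\Delta^{\,n-1})$, not $\gtrsim\Delta$. For small $\Delta$ and $n\ge 3$ this cannot yield $(\star)$; the ``routine arithmetic'' you allude to would have to manufacture an extra factor $\Delta^{-(n-2)}$, which is impossible from those ingredients. The obstruction is structural: a product set that is thin (of width $\sim t$) in $n-2$ coordinates can never have volume linear in $t$.

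The paper avoids this by finding inside $B^n_\infty\cap H^+(u_k)$ a \emph{cone}, not a prism: its base is an $(n-1)$-ball of radius $1-\tan(\Delta/2)\ge 1/(2\sqrt n)$ lying in the facet $F_1=\{x_1=1\}$ of $B^n_\infty$, and its apex is a point at distance $\ge\Delta/2$ from $F_1$. This has $n-1$ ``fat'' directions and only one ``thin'' direction, giving volume $\gtrsim \kappa_{n-1}(2\sqrt n)^{-(n-1)}\Delta/n$, which is linear in $\Delta$. In your $y$-coordinates this corresponds to showing that $\{y_1=0\}\cap T_t$ is large in $(n-1)$-measure, rather than trying to fit a product set into $T_t$.
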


\begin{proof} By assumption, $\eta<c(n)^{-1}< 1/(2\sqrt{n})$, since $\angle(u_k,\epsilon_iw_i)<\pi/4$ for some $\epsilon_i\in\{-1,1\}$ and some $i\in\{1,\ldots,n\}$.  
Let $H^+:=H^+(u_k)$. We may assume that $u_k$ is in the positive hull of $w_1,\ldots,w_n$ and $\omega_0:=\angle(u_k,w_1)=\min\{\angle(u_k,w_i):i=1,\ldots,n\}$. Note that $c(n)\eta\le \omega_0\le \omega_0(n)$, where $\cos(\omega_0(n))=n^{-\frac{1}{2}}$. To see this, assume that $\omega_0>\omega_0(n)$, which implies that
$$
1=\|u_k\|^2=\sum_{i=1}^n\langle u_k,w_i\rangle^2<n\cdot\frac{1}{n}=1,
$$
a contradiction.

Let $F_1=B^n_\infty\cap H^+(w_1)$. If $z$ denotes the  point of $ H^+\cap F_1$ closest to $w_1$, then $\|z-w_1\|=\tan\left(\frac{\omega_0}{2}\right)$, and therefore $H^+\cap F_1$ contains an $(n-1)$-dimensional ball of radius $1-\tan\left(\frac{\omega_0}{2}\right)$. Moreover, $H^+\cap B^n_\infty$ contains a point whose distance from the affine hull of $F_1$ is at least
\begin{align}\label{eq:auxin}
f(\omega_0):&=\left(1-\tan\left(\frac{\omega_0}{2}\right)\right)\tan(\omega_0)\nonumber\\
&=\frac{\sin(\omega_0)+\cos(\omega_0)-1}{\cos(\omega_0)}\ge \frac{1}{2}\omega_0,\quad \omega_0\in[0, {\pi}/{2}].
\end{align}
For the proof of \eqref{eq:auxin}, we consider
$$
g(x)=\sin(x)+\cos(x)-1-\frac{1}{2}x\cos(x),\quad x\in[0,\pi/2].
$$
Since $g''(x)=\cos(x)\left(\frac{1}{2}x-1\right)\le 0$ for $ x\in[0,\pi/2]$,  $g$ is a concave function. The assertion follows, since $g(0)=g(\pi/2)=0$. 

From
$$
\tan\left(\frac{\omega_0}{2}\right)\le 
\tan\left(\frac{\omega_0(n)}{2}\right)
=\frac{\sqrt{1-n^{-1/2}}}{\sqrt{1+n^{-1/2}}}<1-\frac{1}{2\sqrt{n}},
$$
we obtain
$$
1-\tan\left(\frac{\omega_0}{2}\right)\ge \frac{1}{2\sqrt{n}}.
$$
We thus conclude that
\begin{align*}
V(H^+\cap B^n_\infty)&\ge \frac{1}{n}\kappa_{n-1}\left(\frac{1}{2\sqrt{n}}\right)^{n-1}\frac{1}
{2}\omega_0    
\ge 2^{-3} n^{-\frac{1}{2}}n^{-n}\omega_0 V(B^n_\infty).
\end{align*}
Lemma \ref{closesimplex} implies that $
\widetilde{B}^n_\infty=\{x\in\R^n:|\langle x,u_i\rangle|\le 1,\; i=1,\ldots,n\}$ 
satisfies 
$$H^+\cap B^n_\infty\subset \left(H^+\cap\widetilde{B}^n_\infty\right)\cup \left(B^n_\infty\setminus (1-\sqrt{n}\eta)B^n_\infty\right) $$
and therefore
\begin{align*}
    V(H^+\cap \widetilde{B}^n_\infty)&\ge V(H^+\cap B^n_\infty)-\left(1-(1-\sqrt{n}\eta)^n\right)V(B^n_\infty)\\
    &\ge \left(2^{-3}n^{-\frac{1}{2}}n^{-n}\omega_0-\sqrt{n}n\eta\right)V(B^n_\infty).
\end{align*}
Another application of Lemma \ref{closesimplex} yields
\begin{align*}
    V(P)&\le V(\widetilde{B}^n_\infty)-V(H^+\cap \widetilde{B}^n_\infty)\\
    &\le V((1+2\sqrt{n}\eta)B^n_\infty)
    -\left(2^{-3}n^{-\frac{1}{2}}n^{-n}\omega_0-\sqrt{n}n\eta\right)V(B^n_\infty)\\
    &\le \left(1+2^n\sqrt{n}n\eta+\sqrt{n}n\eta-2^{-3}n^{-\frac{1}{2}}n^{-n}\omega_0\right)V(B^n_\infty)\\
    &\le \left(1+2^n n^2\eta-2^{-3}n^{-\frac{1}{2}}n^{-n}\omega_0\right)V(B^n_\infty)\\
    &\le \left(1+\frac{2^n n^2}{c(n)}\omega_0-2^{-3}n^{-\frac{1}{2}}n^{-n}\omega_0\right)V(B^n_\infty)\\
    &\le \left(1 -2^{-4}n^{-\frac{1}{2}}n^{-n}\omega_0\right)V(B^n_\infty),
\end{align*}
which implies the asserted volume bound.
    \hfill\proofbox
\end{proof}

\medskip 

After these preparations, parts (b) and (c) of Theorem ~\ref{meanw-iso-measure-stab} follow from the following theorem.

\begin{theo}\label{theo:theorem 7.7}
Let $\mu$ be an even isotropic measure on $S^{n-1}$.   If 
\begin{equation}\label{eq:A7.0}
    \ell(Z_\infty^*(\mu))\le (1+\varepsilon)\ell(Z_\infty^*(\nu)),
\end{equation}
for some $\varepsilon\in (0,\varepsilon_0)$ with $\varepsilon_0=2^{-70}n^{-30n}$, then 
$$
\delta_{\rm HO}({\rm supp}\,\mu,{\rm supp}\,\nu)\le 2^{20}n^{13n}\varepsilon^{\frac{1}{4}}.
$$
\end{theo}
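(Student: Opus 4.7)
The plan is to reduce to the John--polytope setting of Theorem~\ref{ell-sym-John-stab} via Lemma~\ref{JohnLimited}, extract an explicit orthonormal basis from the proof of Proposition~\ref{gammanbP-stab}, and rule out far-away vectors in ${\rm supp}\,\mu$ by means of Lemma~\ref{volumebound}. By Lemma~\ref{JohnLimited} we fix $u_1,\ldots,u_k \in {\rm supp}\,\mu$ and positive $c_1,\ldots,c_k$ with $k \le n(n+1)/2$ satisfying $\sum c_i u_i \otimes u_i = \Id_n$, and set $P = \{x : |\langle x,u_i\rangle| \le 1,\; i=1,\ldots,k\}$. Then $B_2^n$ is the John ellipsoid of $P$ and $Z_\infty^*(\mu) \subset P$; the hypothesis thus yields $\ell(P) \le \ell(Z_\infty^*(\mu)) \le (1+\varepsilon)\ell(B_\infty^n)$. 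Since $\gamma_n(tP) \le \gamma_n(tB_\infty^n)$ for all $t>0$ by Proposition~\ref{gammanbP-prop}, averaging the identity $\ell(P)-\ell(B_\infty^n) = \int_0^\infty [\gamma_n(tB_\infty^n)-\gamma_n(tP)]\,\dint t$ over $[1,2]$ produces some $b \in [1,2]$ with $\gamma_n(bB_\infty^n)-\gamma_n(bP) \le \varepsilon\,\ell(B_\infty^n)=:\varepsilon'$.

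The stability argument in the proof of Proposition~\ref{gammanbP-stab}, applied with this $b$ and $\varepsilon'$, produces (after relabeling $u_1,\ldots,u_n$) an orthonormal basis $w_1,\ldots,w_n$ of $\R^n$ with $\angle(u_i,w_i) \le 3n\eta$ for $\eta := 2^{14} n^{4n}(\varepsilon')^{1/4}$, together with $\Phi \in O(n)$ such that $\Phi B_\infty^n = \{x : |\langle x,w_i\rangle| \le 1\}$ and $\delta_{\rm vol}(P,\Phi B_\infty^n) \le 2^{20} n^{10n}(\varepsilon')^{1/4}$. Take $\nu$ to be the cross measure on $S^{n-1}$ supported on $\{\pm w_1,\ldots,\pm w_n\}$. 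One direction of $\delta_{\rm HO}({\rm supp}\,\mu,{\rm supp}\,\nu)$ is immediate: each $\pm w_i$ lies within Euclidean distance $\angle(u_i,w_i) \le 3n\eta$ of $\pm u_i \in {\rm supp}\,\mu$.

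For the reverse direction, take $v \in {\rm supp}\,\mu$ and set $\Delta := \min\{\angle(v,\epsilon_i w_i) : \epsilon_i \in \{\pm 1\},\, i=1,\ldots,n\}$. If $\Delta \le 3n\cdot c(n)\eta$ with $c(n) = 2^{n+4} n^{n+3}$ from Lemma~\ref{volumebound}, the desired bound on $\Delta$ is in hand. Otherwise, Lemma~\ref{volumebound} (with $u_k := v$ and $\eta$ replaced by $3n\eta$) yields $V(\tilde P \cap H^-(v)) \le V(B_\infty^n)(1 - \Delta/(2^4 n^{n+1}))$, where $\tilde P = \{x : |\langle x,u_i\rangle|\le 1,\; i=1,\ldots,n\}$. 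Since $\pm v \in {\rm supp}\,\mu$ by evenness, the body $\bar P_v := P \cap \{|\langle x,v\rangle|\le 1\}$ contains $Z_\infty^*(\mu)$ and lies inside $\tilde P \cap H^-(v)$, hence $V(\bar P_v) \le V(B_\infty^n)(1 - \Delta/(2^4 n^{n+1}))$ and $\ell(\bar P_v) \le (1+\varepsilon)\ell(B_\infty^n)$.

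The final step mimics the volume-to-$\ell$-norm transition in the proof of Theorem~\ref{ell-sym-John-stab}. Since $\bar P_v, \Phi B_\infty^n \subset 2\sqrt n\, B_2^n$ on the relevant region, $V(\Phi B_\infty^n)-V(\bar P_v)\ge \Delta\cdot 2^{n-4}/n^{n+1}$, and $V(\bar P_v\setminus \Phi B_\infty^n)\le \delta_{\rm vol}(P,\Phi B_\infty^n)$, we obtain for $t \in [1,2]$
\[
\gamma_n(tB_\infty^n) - \gamma_n(t\bar P_v) \;\ge\; (2\pi)^{-n/2}\, t^n\!\left[e^{-2n}\cdot\frac{\Delta\,2^{n-4}}{n^{n+1}} - \delta_{\rm vol}(P,\Phi B_\infty^n)\right].
\]
Integrating over $[1,2]$ and inserting $\ell(\bar P_v) - \ell(B_\infty^n) \le \varepsilon\ell(B_\infty^n)$ bounds $\Delta$ from above. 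The main obstacle is constant tracking: one distinguishes whether the volume-deficit term $\Delta\cdot 2^{n-4}/n^{n+1}$ or the stability term $\delta_{\rm vol}(P,\Phi B_\infty^n)$ dominates the bracket, and weighs products of the form $e^{2n}\cdot n^{O(n)}\cdot 2^{O(n)}$ against the target exponent $n^{13n}$; using $\ell(B_\infty^n)\le \sqrt n$ and the restriction $\varepsilon < 2^{-70} n^{-30n}$ (which keeps the case split $\Delta \gtrless 3n\cdot c(n)\eta$ well-defined), one arrives at $\Delta \le 2^{20} n^{13n}\varepsilon^{1/4}$, completing the estimate on $\delta_{\rm HO}$.
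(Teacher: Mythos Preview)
Your proposal follows the same overall strategy as the paper: reduce to a finite John decomposition via Lemma~\ref{JohnLimited}, feed the resulting polytope into the stability analysis behind Proposition~\ref{gammanbP-stab} to extract an orthonormal basis $w_1,\ldots,w_n$, and then use Lemma~\ref{volumebound} to rule out support vectors of $\mu$ far from all $\pm w_i$. The architecture is correct and matches the paper.

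There is one minor organisational difference in the final bookkeeping. The paper compares against $\widetilde B_\infty^n=\{x:|\langle x,u_i\rangle|\le 1,\;i=1,\ldots,n\}$ and uses Lemma~\ref{closesimplex} explicitly to pass between $\widetilde B_\infty^n$ and $B_\infty^n$, splitting $\ell(Z_\infty^*(\mu))-\ell(B_\infty^n)$ into $\ell(Z_\infty^*(\mu))-\ell(\widetilde B_\infty^n)$ (controlled by the volume deficit, integrated over $[0,1/(2\sqrt n)]$) and $\ell(\widetilde B_\infty^n)-\ell(B_\infty^n)$ (controlled by Lemma~\ref{closesimplex}). You instead compare against $\Phi B_\infty^n$ directly and absorb the mismatch into the ready-made bound $\delta_{\rm vol}(P,\Phi B_\infty^n)\le 2^{20}n^{10n}(\varepsilon')^{1/4}$ from Proposition~\ref{gammanbP-stab}, integrating over $[1,2]$. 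Both routes are valid; yours is slightly more economical since it recycles the $\delta_{\rm vol}$ conclusion rather than reproving it via Lemma~\ref{closesimplex}. One point to make explicit in a polished version: the integrand $\gamma_n(tB_\infty^n)-\gamma_n(t\bar P_v)$ is nonnegative for \emph{all} $t>0$ (because $\bar P_v\subset P$ and Proposition~\ref{gammanbP-prop} applies to $P$), which is what justifies truncating the integral to $[1,2]$.
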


\begin{proof} For the given measure $\mu$, we choose an even discrete isotropic measure on $S^{n-1}$ according to Lemma \ref{JohnLimited} with ${\rm supp}\, \mu_0=\{u_1,\ldots,u_k\}\subset {\rm supp}\, \mu$. In particular, we then have $Z^*_\infty(\mu)\subset Z^*_\infty(\mu_0)$ and 
\begin{equation}\label{eq:A7.1}
\ell(Z^*_\infty(\mu_0))\le (1+\varepsilon)\ell(Z^*_\infty(\nu)).
\end{equation}
We claim that
\begin{equation}\label{eq:A7.2}
\gamma_n(b Z^*_\infty(\mu_0))\ge \gamma_n(b Z^*_\infty(\nu))-\varepsilon \gamma_n( Z^*_\infty(\nu))
\end{equation}
for some $b\in [1,2]$. 

For the proof, assume to the contrary that 
$$
\gamma_n(b Z^*_\infty(\mu_0))< \gamma_n(b Z^*_\infty(\nu))-\varepsilon \gamma_n( Z^*_\infty(\nu))
$$
or all $b\in [1,2]$. By Proposition \ref{gammanbP-prop}, 
$$
\gamma_n(t Z^*_\infty(\mu_0))\le \gamma_n(t Z^*_\infty(\nu))\quad \text{for }t>0.
$$
Hence
\begin{align*}
\ell(Z_\infty^*(\nu))-\ell(Z^*_\infty(\mu_0))&=\int_0^\infty\left(\gamma_n(t Z^*_\infty(\mu_0))- \gamma_n(t Z^*_\infty(\nu))\right)\, \dint t\\
&<-\varepsilon\ell(Z_\infty^*(\nu)),    
\end{align*}
which contradicts \eqref{eq:A7.1}. 

Thus relation \eqref{gammabPeps-stab} holds for some $b\in[1,2]$ with $\varepsilon$ replaced by $\varepsilon \ell(Z_\infty^*(\nu))$. In the proof of relation \eqref{gammabP-stabvoleps} it is shown that there exists an orthonormal basis $w_1,\ldots,w_n$ of $\R^n$ such that (w.l.o.g.) 
$$
\angle(u_i,w_i)<c_1(n)\varepsilon ^{\frac{1}{4}}\quad\text{for }i=1,\ldots,n
$$
with $c_1(n):=3n2^{14}n^{4n}n^{\frac{1}{8}}$, where we used that $\ell(Z_\infty^*(\nu))=\frac{1}{2}\ell(B^n_2)W(B^n_1)\le \ell(B^n_2)\le \sqrt{n}$ (cf.~the proof of \cite[Lemma 5.7]{BFH21}). If $\nu$ is the cross measure associated with $\pm w_1,\ldots,\pm w_n$, then
\begin{align*}
    {\rm supp}\,\nu&\subset {\rm supp}\,\mu_0 +c_1(n)\varepsilon^{\frac{1}{4}}B^n_2
    \subset {\rm supp}\,\mu +2^{20}n^{13n}\varepsilon^{\frac{1}{4}}B^n_2.
\end{align*}
It remains to be shown that
\begin{equation}\label{eq:A7.3}
      {\rm supp}\,\mu \subset {\rm supp}\,\nu +2^{20}n^{13n}
      \varepsilon^{\frac{1}{4}}B^n_2.
\end{equation}
For the proof, we set $\eta=c_1(n) \varepsilon^{\frac{1}{4}}$. Clearly, $\pm u_i$ is contained in the set on the right-hand side of \eqref{eq:A7.3} for $i\in\{1,\ldots,n\}$. Recall that $c(n)=2^{n+4}n^{n+3}$ and define
$$
\Delta=\max_{u\in{\rm supp}\, \mu}\;\min_{i=1,\ldots,n}\angle(u,w_i). 
$$
If $\Delta<c(n)\eta$, then relation \eqref{eq:A7.3} holds. Henceforth we consider the case where $\Delta\ge c(n)\eta$. Then we may assume that the maximum in the definition of $\Delta$ is realized by $u_0\in ({\rm supp}\, \mu)\setminus\{\pm u_1,\ldots,\pm u_n\}$. We define
\begin{align*}
P&=\{x\in\R^n:|\langle x,u_i\rangle|\le 1 ,\; i=0,1,\ldots,n\},\\
\widetilde{B}^n_\infty &=\{x\in\R^n:|\langle x,u_i\rangle|\le 1 ,\; i=1,\ldots,n\},\\  
 {B}^n_\infty&=\{x\in\R^n:|\langle x,w_i\rangle|\le 1 ,\; i=1,\ldots,n\},
\end{align*}
hence $Z^*_\infty(\mu)\subset P\subset \widetilde{B}^n_\infty$. 
By Lemma \ref{volumebound} we have
$$
V(P)\le \left(1-\frac{\Delta}{2^4n^{n+1}}\right)V(B^n_\infty),
$$
hence 
$$
\delta_{\rm vol}(P,B^n_\infty)\ge \frac{\Delta}{2^4n^{n+1}} V(B^n_\infty).
$$
Lemma \ref{closesimplex} implies that
\begin{align*}
\delta_{\rm vol}(B^n_\infty,\widetilde{B}^n_\infty)
&\le \left[(1+2\sqrt{n}\eta)^n-(1-\sqrt{n}\eta)^n\right]V(B^n_\infty)\le 4\sqrt{n} n\eta V(B^n_\infty),
\end{align*}
since $(1+2\sqrt{n}\eta)^n\le 4/3$. The triangle inequality yields
\begin{align}\label{eq:A7.4}
V(\widetilde{B}^n_\infty\setminus P)&=\delta_{\rm vol}(\widetilde{B}^n_\infty,P)\ge 
\delta_{\rm vol}({B}^n_\infty,P)-\delta_{\rm vol}({B}^n_\infty,\widetilde{B}^n_\infty)\nonumber\\
&\ge \left(\frac{\Delta}{2^4n^{n+1}} -4\sqrt{n}n\eta\right)V(B^n_\infty).    
\end{align}
We claim that 
\begin{equation}\label{eq:A7.5}
V(\widetilde{B}^n_\infty\setminus P)\le 2^{17} n^{11n}\varepsilon^{\frac{1}{4}}.
\end{equation}
Combination of \eqref{eq:A7.4} and \eqref{eq:A7.5} shows that
\begin{align*}
    \Delta&\le 2^4n^{n+1}\left(4\sqrt{n}n\eta+2^{-n}2^{17} n^{11n}\varepsilon^{\frac{1}{4}}\right)\le 2^{20}n^{13n}\varepsilon^{\frac{1}{4}},
\end{align*}
which completes the proof of \eqref{eq:A7.3}, once \eqref{eq:A7.5} has been established.

We finally verify \eqref{eq:A7.5}. Using Lemma \ref{closesimplex}, we get $\widetilde{B}^n_\infty\subset (1+2\sqrt{n}\eta)B^n_\infty\subset 2B^n_\infty$, hence $1/(2\sqrt{n})\widetilde{B}^n_\infty\subset B^n_2$. Then
\begin{align}\label{eq:A7.6}
\ell(Z_\infty^*(\mu))-\ell(\widetilde{B}^n_\infty ))&=\int_0^\infty\left(\gamma_n(t \widetilde{B}^n_\infty)- \gamma_n(t Z^*_\infty(\mu))\right)\, \dint t\nonumber\\
&\ge \int_0^{1/(2\sqrt{n})}\frac{e^{-1/2}}{(2\pi)^{n/2}}t^nV(\widetilde{B}^n_\infty\setminus Z^*_\infty(\mu))\, \dint t\nonumber\\
&\ge \frac{1}{n+1}\left(\frac{1}{2\sqrt{n}}\right)^{n+1}  \frac{e^{-1/2}}{(2\pi)^{n/2}}
V(\widetilde{B}^n_\infty\setminus Z^*_\infty(\mu))\frac{\ell(B^{n}_\infty)}{\sqrt{n}}\nonumber\\
&\ge n^{-6n}V(\widetilde{B}^n_\infty\setminus Z^*_\infty(\mu))\ell(B^{n}_\infty).
\end{align}
Again by an application of Lemma \ref{closesimplex}, we obtain
\begin{align}\label{eq:A7.7}
 \ell(\widetilde{B}^n_\infty ))-\ell( {B}^n_\infty )&\ge 
 \ell((1+2\sqrt{n}\eta){B}^n_\infty ))-\ell( {B}^n_\infty )\nonumber\\
 &=\left[(1+2\sqrt{n}\eta)^{-1}-1\right]\ell( {B}^n_\infty )
 \nonumber\\
 &\ge -2\sqrt{n}\eta \ell( {B}^n_\infty ).
\end{align}
Combining \eqref{eq:A7.0}, \eqref{eq:A7.6} and \eqref{eq:A7.7}, 
we arrive at
\begin{align*}
\varepsilon \ell( {B}^n_\infty )&\ge \ell(Z^*_\infty(\mu) )-\ell( {B}^n_\infty )
\ge \left(\frac{V(\widetilde{B}^n_\infty\setminus Z^*_\infty(\mu))}{n^{6n}}-2\sqrt{n}\eta\right)\ell( {B}^n_\infty ),
\end{align*}
hence
\begin{align*}
   V(\widetilde{B}^n_\infty\setminus Z^*_\infty(\mu))&\le n^{6n}(\varepsilon+2\sqrt{n}\eta)
   \le n^{6n}\left(1+2\sqrt{n}3n2^{14}n^{4n}n^{\frac{1}{8}}\right)\varepsilon^{\frac{1}{4}}\\
   &\le 2^{17}n^{11n}\varepsilon^{\frac{1}{4}},
\end{align*}
which proves the claim, and thus the theorem.
\hfill\proofbox
\end{proof}

\bigskip

\noindent
Authors' addresses:

\medskip

\noindent
K\'aroly J. B\"or\"oczky, MTA Alfr\'ed R\'enyi Institute of Mathematics, Hungarian Academy of Sciences, Re\'altanoda u. 13-15, 1053 Budapest, Hungary. E-mail: carlos@renyi.hu

\medskip
\noindent
Ferenc Fodor, Bolyai Institute, University of Szeged, Aradi v\'ertan\'uk tere 1, 6720 Szeged, Hungary. E-mail: fodorf@math.u-szeged.hu

\medskip
\noindent
Daniel Hug, Karlsruhe Institute of Technology (KIT), D-76128 Karlsruhe, Germany. E-mail: daniel.hug@kit.edu


\begin{thebibliography}{99}

\bibitem{AGB15}
D. Alonso-Guti\'errez, J. Bastero: 
Approaching the Kannan--Lovász--Simonovits and variance conjectures. Lecture Notes in Mathematics, 2131. Springer, Cham, 2015.

\bibitem{AGM15}
S. Artstein--Avidan, A. Giannopoulos, V.D. Milman: Asymptotic geometric analysis. Part I. Mathematical Surveys and Monographs, 202. American Mathematical Society, Providence, RI, 2015.

\bibitem{Bal89}
K. Ball:
Volumes of sections of cubes and related problems.
In: J. Lindenstrauss and V.D. Milman (ed), Israel seminar on Geometric
Aspects of Functional Analysis (1987--1988), 1376, Lectures Notes in
Mathematics. Springer-Verlag, 1989.

\bibitem{Bal91a}
K. Ball:
Volume ratios and a reverse isoperimetric inequality.
J. London Math. Soc. 44
(1991), 351--359.


\bibitem{Bal92}
K. Ball:
Ellipsoids of maximal volume in convex bodies.
Geom. Dedicata 41 (1992), 241--250.




\bibitem{BaK18}
 Z. Balogh, A. Krist\'aly: Equality in Borell-Brascamp-Lieb inequalities on curved spaces.
Adv. Math. 339 (2018), 453--494.

\bibitem{Bar97}
F. Barthe: In\'egalit\'es de Brascamp-Lieb et convexit\'e.
C. R. Acad. Sci. Paris 324 (1997), 885--888.

\bibitem{Bar98}
F. Barthe:
On a reverse form of the Brascamp-Lieb inequality.
Invent. Math. 134 (1998), 335--361.

\bibitem{Bar98b}
F. Barthe:
 An extremal property of the mean width of the simplex.
Math. Ann. 310 (1998), 685--693.


\bibitem{BCE13}
F. Barthe, D. Cordero-Erausquin:
Invariances in variance estimates.
Proc. Lond. Math. Soc. 106 (2013), 33--64.

\bibitem{BCLM11}
F.~Barthe, D.~Cordero-Erausquin,  M.~Ledoux, B.~Maurey:
Correlation and Brascamp-Lieb inequalities for Markov semigroups.
Int. Math. Res. Not. 10 (2011), 2177--2216.


\bibitem{BBFL18}
J. Bennett, N. Bez, T.C.  Flock, S. Lee: Stability of the Brascamp-Lieb constant and applications. Amer. J. Math. 140 (2018), 543--569.

\bibitem{BCCT08}
J. Bennett, T. Carbery, M. Christ, T. Tao:
The Brascamp--Lieb Inequalities: Finiteness, Structure and Extremals.  Geom. Func. Anal. 17 (2008), 1343--1415.

\bibitem{Bor75}
C. Borell: Convex set functions in $d$-space. Period. Math. Hung. 6 (1975), no. 2, 111--136.


\bibitem{BFH19}  
K.J. B\"or\"oczky, F. Fodor, D. Hug:
Strengthened volume inequalities for $L_p$ zonoids of even isotropic measures. 
Trans. Amer. Math. Soc. 371 (2019), 505-548.

\bibitem{BFH21}  
K.J. B\"or\"oczky, F. Fodor, D. Hug:
Strengthened inequalities for the mean width and the $\ell$-norm.
J. London Math Society, (2) 104 (2021), 233-268.

\bibitem{BoH99}
K. B\"or\"oczky, Jr., M. Henk: 
Random projections of regular polytopes. 
Arch. Math. (Basel) 73 (1999), 465-473.

\bibitem{BoH17}
K.J.~B\"or\"oczky, D.~Hug:
Isotropic measures and stronger forms of the reverse isoperimetric inequality.
 Trans. Amer. Math. Soc.  369 (2017), 6987--7019.


\bibitem{BrL76}
H.J. Brascamp, E.H. Lieb:
Best constants in Young’s inequality,
its converse, and its generalization to more than three functions.
Advances in Math. 20 (1976), 151--173.

\bibitem{CCE09}
E. Carlen, D. Cordero-Erausquin:
Subadditivity of the entropy and its relation to Brascamp-Lieb type inequalities.
Geom. Funct. Anal. 19 (2009), 373--405.



\bibitem{GhS17}
D. Ghilli, P. Salani:  Quantitative Borell-Brascamp-Lieb inequalities for power concave functions. J. Convex Anal. 24 (2017), 857--888.



\bibitem{GianPapa1999}
A.A.~Giannopoulos, M.~Papadimitrakis:
Isotropic surface area measures.
Mathematika 46 (1999), 1--13



\bibitem{Groemer2000}
H.~Groemer: On the symmetric difference metric for convex bodies. Beitr\"age Algebra Geom. 41 (2000), no. 1, 107--114. 


\bibitem{GrS05}
P.M. Gruber, F.E. Schuster:
An arithmetic proof of John's ellipsoid theorem.
Arch. Math. 85 (2005), 82--88.



\bibitem{GuM11}
O. Guedon, E. Milman:
Interpolating thin-shell and sharp large-deviation estimates for
  isotropic log-concave measures.
Geom. Funct. Anal. 21 (2011), 1043--1068.

\bibitem{HW20}
D. Hug, W. Weil: 
Lectures on Convex Geometry. Graduate Texts in Mathematics, vol. 286, Springer Nature Switzerland AG, Cham, 2020.



\bibitem{Joh37}
F. John:
Polar correspondence with respect to a convex region.
Duke Math. J.  3  (1937),  355--369.

\bibitem{Joh48}
F. John: Extremum problems with inequalities as subsidiary conditions. In: Studies and Essays
Presented to R. Courant on His 60th Birthday, January 8, 1948, pp. 187--204,
Interscience Publishers, New York, 1948.

\bibitem{KLM95}
R. Kannan, L. Lov\'asz, M. Simonovits:
Isoperimetric problems for convex bodies and a localization  lemma.
Discrete Comput. Geom. 13 (1995), 541--559.

\bibitem{Kla09}
B. Klartag: A Berry-Esseen type inequality for convex bodies with an unconditional basis,
 Probab. Theory Related Fields {145} (2009), 1--33.




\bibitem{LiL12}
A.-J. Li, G. Leng:
Mean width inequalities for isotropic measures.
Math. Z. 270 (2012), 1089--1110.

\bibitem{Lie90}
E.H. Lieb:
 Gaussian kernels have only Gaussian maximizers.
 Invent. Math. 102 (1990), 179--208.




\bibitem{Lutwak0}
E. Lutwak, D. Yang, G. Zhang: Volume inequalities for subspaces of $L_p$.
J. Diff. Geom. 68 (2004), 159--184.



\bibitem{Lutwak1}
E. Lutwak, D. Yang, G. Zhang: Volume inequalities for isotropic measures.
Amer. J. Math. 129 (2007), 1711--1723.


\bibitem{RoS171}
A. Rossi, P. Salani: Stability for Borell-Brascamp-Lieb inequalities. Geometric aspects of functional analysis,
Lecture Notes in Math., 2169, Springer, Cham, (2017), 339--363.

\bibitem{ScS95}
G. Schechtman, M. Schmuckenschl\"ager:
A concentration inequality for harmonic measures.
In: Geometric Aspects of Functional Analysis (1992-1994), Birkhauser, (1995), 255--273.

\bibitem{Sch99}
M. Schmuckenschl\"ager:
An extremal property of the regular simplex.
In: Convex geometric analysis. Cambridge, (1999), 199--202.

\bibitem{Sch14}
R.~Schneider:
Convex bodies: the Brunn-Minkowski theory. Second expanded edition. Encyclopedia of Mathematics and its Applications, 151. Cambridge University Press, Cambridge, 2014.



\bibitem{Val08}
S.I. Valdimarsson:
Optimisers for the Brascamp-Lieb inequality.
Israel J. Math., 168 (2008), 253-274.



\end{thebibliography}
\end{document}